\newtcolorbox{greyFrameTransmission}{
  nobeforeafter,
  colframe=black!10!white,
  colback=white,
  top=2mm, bottom=3 mm, left=0mm, right=0mm,
  arc=0mm,
  %
  fonttitle=\bfseries\color{black!70!black},
  title= \center Transmission Problem
}
\newtcolorbox{greyFrameCoupledProblem}{
  colframe=black!10!white,
   colback =white,
  top=2mm, bottom=3 mm, left=0mm, right=0mm,
  arc=0mm,
  %
  fonttitle=\bfseries\color{black!70!black},
  title= \center Symmetrically Coupled Problem
}
\newtcolorbox{greyFrameSymmetry}{
  colframe=black!20!white,
  top=2mm, bottom=3 mm, left=0mm, right=0mm,
  arc=0mm,
  %
  fonttitle=\bfseries\color{black!70!black},
  title= \center Symmetry of Calder\'on Projector
}
\DeclareMathOperator{\diag}{diag}
\newcommand\ip[2]{\langle #1, #2 \rangle}
\newcommand\bra[1]{\left(#1\right)}
\newcommand\uext{\mathbf{U}^{\text{ext}}}
\newcommand\boldcurl{\mathbf{curl}\,}
\newcommand{\id}{\mathrm{Id}}
\newcommand*\colvec[1]{
  \global\colveccount#1
  \begin{pmatrix}
    \colvecnext
  }
  \def\colvecnext#1{
    #1
    \global\advance\colveccount-1
    \ifnum\colveccount>0
    \\
    \expandafter\colvecnext
    \else
  \end{pmatrix}
  \fi
}
\DeclareFontFamily{OMX}{MnSymbolE}{}
\DeclareSymbolFont{MnLargeSymbols}{OMX}{MnSymbolE}{m}{n}
\DeclareFontShape{OMX}{MnSymbolE}{m}{n}{
  <-6>  MnSymbolE5
  <6-7>  MnSymbolE6
  <7-8>  MnSymbolE7
  <8-9>  MnSymbolE8
  <9-10> MnSymbolE9
  <10-12> MnSymbolE10
  <12->   MnSymbolE12
}{}
\DeclareFontShape{OMX}{MnSymbolE}{b}{n}{
  <-6>  MnSymbolE-Bold5
  <6-7>  MnSymbolE-Bold6
  <7-8>  MnSymbolE-Bold7
  <8-9>  MnSymbolE-Bold8
  <9-10> MnSymbolE-Bold9
  <10-12> MnSymbolE-Bold10
  <12->   MnSymbolE-Bold12
}{}
\let\llangle\@undefined
\let\rrangle\@undefined
\DeclareMathDelimiter{\llangle}{\mathopen}%
{MnLargeSymbols}{'164}{MnLargeSymbols}{'164}
\DeclareMathDelimiter{\rrangle}{\mathclose}%
{MnLargeSymbols}{'171}{MnLargeSymbols}{'171}
\newcommand{\hly}[1]{%
  \colorbox{yellow!30}{$\displaystyle#1$}}
\newcommand{\hlo}[1]{%
  \colorbox{orange!10}{$\displaystyle#1$}}
\newcommand{\hlb}[1]{%
  \colorbox{cyan!10}{$\displaystyle#1$}}
\newcommand{\hlm}[1]{%
  \colorbox{magenta!10}{$\displaystyle#1$}}
\newtcolorbox{greyFrame}{
  nobeforeafter,
  colframe=white,
   colback =red!20!white,
  %
}
\newtheorem{theorem}{Theorem}[section]
\newtheorem{corollary}[theorem]{Corollary}
\newtheorem{lemma}[theorem]{Lemma}
\newtheorem{proposition}[theorem]{Proposition}
\theoremstyle{remark}
\newtheorem*{warning}{Warning}
\newtheorem{remark}{Remark}
\begin{document}

%
%
%
%
%
%
%
%
%

\title[Symmetric Coupling for Hodge-Helmholtz Operators]
{Coupled Domain-Boundary Variational\\Formulations for Hodge--Helmholtz \\Operators}

\author[Schulz]{Erick Schulz}

\address{%
  ETH Z\"urich, SAM\\
  HG G 58.3\\
  CH 8092 Z\"urich\\
  Switzerland}

\email{erick.schulz@sam.math.ethz.ch}

\thanks{The work of Erick Schulz was supported by SNF as part of the grant 200021\_184848/1}
\author[Hiptmair]{Ralf Hiptmair}
\address{%
  ETH Z\"urich, SAM\\
  HG G 58.3\\
  CH 8092 Z\"urich\\
  Switzerland}
\email{ralf.hiptmair@sam.math.ethz.ch}
\subjclass{35Q61, 35Q60, 65N30, 65N38, 78A45, 78M10, 78M15}

\keywords{
  Hodge-Laplace equation, Hodge-Helmholtz equation, Hodge decomposition, 
  Calder\'on projector, symmetric coupling, T-coercivity}

\date{June 7, 2020}

\begin{abstract}
  We couple the mixed variational problem for the generalized Hodge-Helmholtz or
  Hodge-Laplace equation posed on a bounded 3D Lipschitz domain with the
  first-kind boundary integral equations arising from the latter when constant
  coefficients are assumed in the unbounded complement. Recently developed Calder\'on
  projectors for the relevant boundary integral operators are used to perform a symmetric
  coupling. We prove stability of the coupled problem away from resonant frequencies by establishing a generalized G{\aa}rding inequality (T-coercivity). The resulting system of equations describes the scattering of monochromatic electromagnetic waves at a bounded inhomogeneous isotropic body possibly having a ``rough" surface. The low-frequency robustness of the potential formulation of Maxwell's equations makes this model a promising starting point for Galerkin discretization.
\end{abstract}

\maketitle
\section{Introduction}\label{sec: Introduction}
Let $\Omega_s\subset\mathbb{R}^3$ be a bounded Lipschitz domain \cite[Def. 2.1]{steinbach2007numerical} representing a region of space occupied by a dielectric object, the scatterer, with spatially varying material properties. The scalar material coefficients are assumed to be bounded, i.e. $\mu,\epsilon\in L^{\infty}(\mathbb{R}^3)$. In a non-dissipative medium, the functions $\mu$ and $\epsilon$ are real-valued and uniformly positive. Dissipative effects are captured by allowing the coefficients to have non-negative imaginary parts \cite[Sec. 1.1.3]{assous2018mathematical}. We follow \cite{hazard1996solution} and suppose that
\begin{align*}
&0<\mu_{\text{min}}\leq\mathfrak{Re}(\mu)\leq\mu_{\text{max}},& &0\leq\mathfrak{Im}(\mu) ,\\
&0<\epsilon_{\text{min}}\leq\mathfrak{Re}(\epsilon)\leq\epsilon_{\text{max}},& &0\leq\mathfrak{Im}(\epsilon),\\
&0 \leq \mathfrak{Re}(\kappa^2), & &0\leq\mathfrak{Im}(\kappa^2).     
\end{align*} We assume for simplicity that $\Omega_s$ has trivial cohomology, in other words that its first and second Betti numbers are zero \cite[Sec. 4.4]{arnold2018}. Qualitatively, this means that it doesn't feature  handles nor interior voids: it is homeomorphic to a ball. 

\begin{remark}
The hypothesis that the second Betti number is zero is only used to prove injectivity of the coupling problem for Hodge--Laplace operators. It can be dropped without any changes to the following development for couplings involving the Hodge--Helmholtz operator (\emph{non-static} electromagnetic transmission problems). The hypothesis that the first Betti number is zero is used in \Cref{sec: compactness and coercivity} to guarantee the existence of a certain ``scalar potential lifting" that greatly simplifies the Fredholm arguments.
\end{remark}

Inside this possibly inhomogeneous isotropic physical body, the potential
formulation of Maxwell's equations in frequency domain driven by a source current
$\mathbf{J}\in\mathbf{L}^{2}(\Omega_s)$ with angular frequency $\omega>0$ reads \cite{chew2014vector}
\begin{subequations}
  \begin{align}
    \mathbf{curl}\,\bra{\mu^{-1}(\mathbf{x})\,\mathbf{curl}\,\mathbf{U}} +i\omega\epsilon(\mathbf{x}) \nabla V - \omega^2 \epsilon(\mathbf{x})\,\mathbf{U} &= \mathbf{J},\\
    \text{div}\,\bra{\epsilon(\mathbf{x})\mathbf{U}}+i\omega V &=0,\label{eq: Lorentz gauge}
  \end{align}
  \end{subequations}
where the Lorentz gauge \eqref{eq: Lorentz gauge} relates the scalar potential $V$ to the vector potential
$\mathbf{U}$. Elimination of $V$ using this relation leads to
the Hodge--Helmholtz equation
\begin{equation}
  \mathbf{curl}\,\bra{\mu^{-1}(\mathbf{x})\,\mathbf{curl}\,\mathbf{U}} - \epsilon(\mathbf{x})\,\nabla\,\text{div}\, \bra{\epsilon(\mathbf{x})\mathbf{U}} - \omega^2\epsilon(\mathbf{x})\,\mathbf{U} = \mathbf{J}. \label{eq: div Maxwell elimination}
\end{equation}

Away from the source current, in the unbounded region
$\Omega':=\mathbb{R}^3 \backslash \overline{\Omega}_s$ outside the scatterer $\Omega_s$, where we
assume a homogeneous material with scalar constant permeability $\mu_0>0$ and dielectric
permittivity $\epsilon_0>0$, equation \eqref{eq: div Maxwell elimination} reduces to
\begin{equation*}
  \Delta_{\eta}\mathbf{U}-\kappa^2\mathbf{U}:=\mathbf{curl}\,\mathbf{curl}\,\mathbf{U} - \eta\,\nabla\,\text{div}\,\mathbf{U} - \kappa^2\mathbf{U} = 0,
\end{equation*}
with constant coefficients $\eta = \mu_0\epsilon_0^2$ and $\kappa^2 = \mu_0\epsilon_0\omega^2$.

For given data
$\mathbf{g}_R\in\mathbf{H}^{-1/2}(\text{div}_{\Gamma})$, $g_n\in H^{-1/2}(\Gamma)$,
$\zeta_D\in H^{1/2}(\Gamma)$ and $\bm{\zeta}_t\in\mathbf{H}^{-1/2}(\text{curl}_{\Gamma},\Gamma)$ on the boundary $\Gamma=\partial\Omega_s$,
we are interested in the following transmission problem, cf. \cite[Sec. 2.1.2]{hazard1996solution}, \cite{chew2014vector}:
\newline\newline
\begin{greyFrameTransmission}
\medskip
\emph{Volume equations}
\smallskip
  \begin{subequations}\label{eq: transmission problem}
    \begin{align}
      \mathbf{curl}\,\bra{\mu^{-1}(\mathbf{x})\,\mathbf{curl}\,\mathbf{U}} -
      \epsilon(\mathbf{x})\,\nabla\,\text{div}\, \bra{\epsilon(\mathbf{x})\,\mathbf{U}} -
      \omega^2\epsilon(\mathbf{x})\,\mathbf{U} = \mathbf{J} \,\text{ in }\Omega_s,
      \label{eq: strong from coupled sys a}\\
      \mathbf{curl}\,\mathbf{curl}\,\uext - \eta\,\nabla\text{div}\,\uext -
      \kappa^2\,\uext = 0\, \text{ in }\Omega',
      \label{eq: strong from coupled sys b}
    \end{align}
  \end{subequations}
\medskip
  \emph{Transmission conditions}
\smallskip
  \begin{subequations}
    \begin{align}
      &\gamma^-_{R,\mu}(\mathbf{U}) =
      \gamma^{+}_R\uext+\mathbf{g}_R,\quad\gamma^{-}_{n,\epsilon}(\mathbf{U}) =
      \gamma^+_n(\mathbf{U}^{\text{ext}})+g_n&&\text{on }\Gamma,
      \label{eq: strong from coupled sys c}\\
      &\gamma^-_{D,\epsilon}(\mathbf{U}) = \eta\, \gamma^{+}_D \uext + \zeta_D,\quad
      \gamma_t^- \mathbf{U}-\gamma_t^+ \mathbf{U} = \bm{\zeta}_t
      &&\text{on }\Gamma.
      \label{eq: strong from coupled sys d} 
    \end{align}                        
  \end{subequations} 
\end{greyFrameTransmission}  
\newline\newline
The traces $\gamma_{\bullet}^\mp$, $\bullet=R$, $D$, $n$, etc., on $\Gamma$ from inside (superscript $-$) and outside (superscript $+$) $\Omega_s$ are defined for a smooth vector-field $\mathbf{U}$ by
  \begin{align*}
    \gamma^-_{R,\mu}(\mathbf{U})&:= -\gamma^-_{\tau}\bra{\mu^{-1}\bra{\mathbf{x}}\,\mathbf{curl}(\mathbf{U})}, & 
    \gamma^+_{R}(\uext)&:= -\gamma^+_{\tau}\bra{\mathbf{curl}\bra{\uext}},\\ 
    \gamma^-_{D,\epsilon}(\mathbf{U})&:=\gamma^-\bra{\text{div}\bra{\epsilon\bra{\mathbf{x}}\,\mathbf{U}}}, & 
    \gamma^+_{D}(\uext)&:=\gamma^+\bra{\text{div}\bra{\uext}},\\
    \gamma^{-}_{n,\epsilon}(\mathbf{U})&:= \gamma^-_n(\epsilon\bra{\mathbf{x}}\,\mathbf{U})& 
    \gamma^\pm_{t}\bra{\mathbf{U}} &:= \mathbf{n}\times\bra{\gamma^\pm_{\tau}\bra{\mathbf{U}}},
  \end{align*}
involving the classical traces
\begin{align*}
  &\gamma\bra{\mathbf{U}} := \mathbf{U}\big\vert_{\Gamma}, 
  && \gamma_n\bra{\mathbf{U}} := \gamma\bra{\mathbf{U}}\cdot \mathbf{n},
  &&\gamma_{\tau}\bra{\mathbf{U}} := \gamma\bra{\mathbf{U}}\times \mathbf{n},
\end{align*}
where $\mathbf{n}\in\mathbf{L}^{\infty}(\Gamma)$ is the essentially bounded unit normal vector field on $\Gamma$ directed toward the exterior of $\Omega_s$ \cite[Thm. 3.1.6]{federer2014geometric}. 

For positive frequencies $\omega>0$, we supplement \eqref{eq: strong from coupled sys
  a}-\eqref{eq: strong from coupled sys d} with the variants of the Silver-M\"{u}ller's
radiation condition imposed at infinity provided in \cite{hazard1996solution}. In the
static case where $\kappa=\omega=0$, we seek a solution in an appropriate weighted Sobolev
space that accounts for decay conditions \cite[Sec. 2.5]{schwarz2006hodge}.

\subsection{Our contributions.}
In the following, we couple the \emph{mixed formulation} of the weak variational problem
associated to \eqref{eq: strong from coupled sys a} with the first-kind boundary integral
equation (BIE) arising from \eqref{eq: strong from coupled sys b} using these recently developed
Calder\'on projectors for the Hodge--Helmholtz and Hodge--Laplace operators. 
The proof of the well-posedness of the coupled
problem relies on T-coercivity (c.f. \cite{ciarlet2012t}) and is given in
\cref{sec:main}. It draws on and integrates several fundamental results of the theory of
first-kind boundary integral operators on Lipschitz domains and of the mathematical
analysis of Maxwell's equations:
\begin{itemize}
\item[$\triangleright$] M.~Costabel's symmetric coupling approach linking volume variational equations with
  BIEs \cite{costabel1987symmetric},
\item[$\triangleright$] T-coercivity for electromagnetic variational problems via Hodge--type
  decompositions \cite{hiptmair2003coupling,claeys2017first},
\item[$\triangleright$] mixed variational formulations of boundary value problems for Hodge--Laplace
  operators \cite{arnold2006finite}.
\end{itemize}
A crucial and surprising discovery is the perfect match of the interface terms naturally
arising from the mixed variational formulation and from the first-kind BIE, see \cref{sec:
  coupled problem}, and in particular \eqref{Calderon coupled problem}, for details.

\section{Preliminaries} 
Let $\Omega\in\{\Omega_s,\Omega'\}$. As usual, $L^2(\Omega)$ and $\mathbf{L}^2(\Omega)$ denote the Hilbert spaces of square
integrable scalar and vector-valued functions defined over $\Omega$. We denote their inner
products using round brackets, e.g. $(\cdot,\cdot)_{\Omega}$. Similarly,  $H^1(\Omega)$ and $\mathbf{H}^1(\Omega)$ refer to the corresponding Sobolev spaces. We write
$C^{\infty}_0(\Omega)$ for the space of smooth compactly supported functions in $\Omega$,
but denote by $\mathscr{D}(\Omega)^3$ the analogous space of vector fields to simplify
notation. The Banach spaces
\begin{align*}
    \mathbf{H}(\text{div}, \Omega) &:=\{\mathbf{U}\in L^2(\Omega) \,\vert\, \text{div}(\mathbf{U})\in L^2(\Omega)\},\\
    \mathbf{H}(\epsilon;\text{div}, \Omega) &:=\{\mathbf{U}\in L^2(\Omega) \,\vert\, \epsilon (\mathbf{x})\,\mathbf{U}\in \mathbf{H}(\text{div}, \Omega)\},\\
    \mathbf{H}(\mathbf{curl}, \Omega) &:=\{\mathbf{U}\in L^2(\Omega) \,\vert\, \boldcurl(\mathbf{U})\in L^2(\Omega)\},\\
    \mathbf{H}\bra{\nabla \text{div},\Omega} &:= \{\mathbf{U}\in \mathbf{H}\bra{\text{div},\Omega} \,\vert\, \text{div}(\mathbf{U})\in H^1(\Omega)\},\\
    \mathbf{H}\bra{\epsilon;\nabla \text{div},\Omega_s} &:= \{\mathbf{U}\in \mathbf{L}^2(\Omega)\,\vert\,\epsilon(\mathbf{x})\,\mathbf{U}\in \mathbf{H}\bra{\nabla \text{div},\Omega} \},\\
    \mathbf{H}(\mathbf{curl}^2,\Omega)&:=\{\mathbf{U}\in \mathbf{H}(\mathbf{curl}, \Omega) \,\vert\, \mathbf{curl}(\mathbf{U})\in \mathbf{H}(\mathbf{curl}, \Omega)\},\\
    \mathbf{H}(\mu^{-1};\mathbf{curl}^2,\Omega)&:=\{\mathbf{U}\in \mathbf{H}(\mathbf{curl}, \Omega) \,\vert\, \mu^{-1}\,\mathbf{curl}(\mathbf{U})\in \mathbf{H}(\mathbf{curl}, \Omega)\},
\end{align*}
equipped with the natural graph norms will be important. The variational space for the primal variational formulation of the classical and
generalized Hodge--Helmholtz/Laplace operator is given by
\begin{align}
  \mathbf{X}(\Delta,\Omega)&:=\mathbf{H}(\mathbf{curl}^2,\Omega)\cap \mathbf{H}\bra{\nabla \text{div},\Omega}.
\end{align}

A subscript is used to identify spaces of locally integrable functions or vector fields,
e.g. $U\in L^2_{\text{loc}}(\Omega)$ if and only if $\phi U$ is square-integrable for all
$\phi\in C^{\infty}_0(\mathbb{R}^3)$. Dual spaces, e.g. $H^1_0(\Omega_s)'=H^{-1}(\Omega_s)$, and dual operators, e.g. $(\gamma^-)'$ are written with primes. We use an asterisk to indicate spaces of functions
with zero mean, e.g. $H^1_*(\Omega)$, and let $\mathbf{mean}:H^1(\Omega_s)\rightarrow \mathbb{R}$ be the continuous operator defined by 
\begin{equation*}
    \mathbf{mean}(P):=\int_{\Omega_s}P(\mathbf{x})\dif\mathbf{x}.
\end{equation*}
Since its range is finite dimensional, $\mathbf{mean}$ is a compact operator \cite[Thm. 2.18]{MR1723850}. The operator $Q_*:H^1(\Omega_s)\rightarrow H^1_*(\Omega_s)$ defined by $Q_*=\id-\mathbf{mean}$ is a projection onto mean zero functions.

\subsection{Trace spaces}
Development of trace-related theory for Lipschitz domains and detailed definitions for the surface differential operators $\nabla_{\Gamma}$, $\text{curl}_{\Gamma}$, $\mathbf{curl}_{\Gamma}$ and $\text{div}_{\Gamma}$ can be found in \cite{buffa2001traces_a}, \cite{buffa2001traces_b} and \cite{buffa2002traces}.
In this section, we define the product trace spaces required for a variational
treatment of the Hodge--Laplace/Helmholtz operator. The traces are adapted to the system of equations at hand by accounting for the varying coefficients of \eqref{eq: strong from coupled sys a}.
\label{sec: classical traces}

Based on the continuous and surjective extensions
\begin{align*}
	\gamma &: H^1\bra{\Omega}\rightarrow H^{1/2}\bra{\Gamma}, &&\text{\cite[Thm. 4.2.1]{Hsiao2008}}\\
	\gamma_n &: \mathbf{H}(\text{div}, \Omega)\rightarrow H^{-1/2}\bra{\Gamma},&&\text{\cite[Thm. 2.5, Cor. 2.8]{girault2012finite}}\\
	\gamma_{\tau}&:\mathbf{H}\bra{\mathbf{curl},\Omega} \rightarrow \mathbf{H}^{-1/2}(\text{div}_\Gamma,\Gamma),&&\text{\cite[Thm. 4.1]{buffa2002traces}}\\
	\gamma_{t}&:\mathbf{H}\bra{\mathbf{curl},\Omega} \rightarrow \mathbf{H}^{-1/2}(\text{curl}_\Gamma,\Gamma), &&\text{\cite[Thm. 4.1]{buffa2002traces}}
\end{align*}
the traces previously introduced can also be extended by continuity to the relevant Sobolev spaces. We denote the duality pairing between $H^{1/2}(\Gamma)$ and $H^{-1/2}(\Gamma)$ by $\langle\cdot,\cdot\rangle_{\Gamma}$, but use $\langle\cdot,\cdot\rangle_{\tau}$ for the duality pairing between the trace spaces $\mathbf{H}^{-1/2}(\text{curl}_\Gamma,\Gamma)$ and $\mathbf{H}^{-1/2}(\text{div}_\Gamma,\Gamma)$ \cite[Lem. 5.6]{buffa2002traces}.

The duality pairings enter Green's formulas ($+$ for $\Omega=\Omega_s$)
 \begin{subequations}
\begin{align}
     \langle\gamma \bra{P} \gamma_n\bra{\mathbf{W}}\rangle_{\Gamma}&=\pm\int_{\Omega} \text{div}(\mathbf{W})\, P + \mathbf{W}\cdot\nabla P \, \dif \mathbf{x},\label{IBP div}\\
    \langle\gamma_{t}\bra{\mathbf{V}}, \gamma_{\tau}\bra{\mathbf{U}}\rangle_{\tau}&=\pm\int_{\Omega}\mathbf{U}\cdot\mathbf{curl\,}(\mathbf{V}) -\mathbf{curl\,}(\mathbf{U})\cdot\mathbf{V}\dif \mathbf{x}, \label{IBP curl ext}\\
    \langle \gamma_t(\mathbf{V}),\gamma_R(\mathbf{E}) \rangle_{\tau}&=\pm\int_{\Omega}\mathbf{curl}\,\mathbf{curl}\,\mathbf{E}\cdot\mathbf{V} - \mathbf{curl}\,\mathbf{E}\cdot \mathbf{curl}\,\mathbf{V}\dif\mathbf{x},\label{curl curl integral identity}
\end{align}
\end{subequations}
which hold for all $P\in H^1(\Omega)$, $\mathbf{W}\in\mathbf{H}(\text{div},\Omega)$, $\mathbf{U},\mathbf{V}\in\mathbf{H}(\mathbf{curl},\Omega)$ and $\mathbf{E}\in\mathbf{H}(\mathbf{curl}^2,\Omega)$.

\label{subsec: compound trace spaces}
As explained in \cite[Sec. 3]{claeys2017first}, a theory of differential equations for the
Hodge--Helmholtz/Laplace problem in three dimensions entails partitioning our collection
of traces into two \emph{dual} pairs. Accordingly, we now introduce the continuous and surjective mappings
  \begin{align*}
    &\mathcal{T}^{-}_{D,\epsilon}:\mathbf{H}_{\text{loc}}(\mathbf{curl},\Omega_s)\cap\mathbf{H}_{\text{loc}}(\epsilon;\nabla\text{div},\Omega_s)\rightarrow\mathcal{H}_D(\Gamma),\\
    &\mathcal{T}^{+}_{D}:\mathbf{H}_{\text{loc}}(\mathbf{curl},\Omega')\cap\mathbf{H}_{\text{loc}}(\nabla\text{div},\Omega')\rightarrow\mathcal{H}_D(\Gamma),\\
    &\mathcal{T}^{-}_{N,\mu}:\mathbf{H}_{\text{loc}}(\mu^{-1};\mathbf{curl}^2,\Omega_s)\cap\mathbf{H}_{\text{loc}}(\epsilon;\text{div},\Omega_s) \rightarrow\mathcal{H}_N(\Gamma),\\
    &\mathcal{T}^{+}_{N}:\mathbf{H}_{\text{loc}}(\mathbf{curl}^2,\Omega')\cap\mathbf{H}_{\text{loc}}(\text{div},\Omega') \rightarrow\mathcal{H}_N(\Gamma),
  \end{align*}
defined by 
\begin{align*}
  \mathcal{T}^{-}_{D,\epsilon}\bra{\mathbf{U}}&:= \colvec{2}{\gamma^-_t(\mathbf{U})}{\gamma_{D,\epsilon}^-(\mathbf{U})},
  &
  \mathcal{T}^{-}_{N,\mu}\bra{\mathbf{U}}&:= \colvec{2}{\gamma_{R,\mu}(\mathbf{U})}{\gamma^-_{n,\epsilon}(\mathbf{U})},\\
  \mathcal{T}^+_D(\mathbf{U})&:=\colvec{2}{\gamma_t^+(\mathbf{U})}{\gamma^+_{D,\eta}(\mathbf{U})}, &
  \mathcal{T}^+_N(\mathbf{U})&:=\colvec{2}{\gamma_R(\mathbf{U})}{\gamma_n(\mathbf{U})},
\end{align*}
where
  \begin{align*}
    &\mathcal{H}_D:=\mathbf{H}^{-1/2}(\text{curl}_{\Gamma},\Gamma)\times H^{1/2}(\Gamma),\\
    &\mathcal{H}_N:=\mathbf{H}^{-1/2}(\text{div}_{\Gamma},\Gamma)\times H^{-1/2}(\Gamma).
  \end{align*}
They admit continuous right-inverses, i.e. lifting maps from the trace spaces into $\mathbf{X}(\Delta,\Omega)$ \cite[Lem. 3.2]{claeys2017first}.

In literature the pair of traces involved in $\mathcal{T}_N$ is labelled as \emph{magnetic}, while the pair in $\mathcal{T}_D$ is referred to as \emph{electric}---simply because one recovers the magnetic field by taking the curl of the potential $\mathbf{U}$. However, our choice of subscripts is motivated by the analogy between this pair of product traces and the classical Dirichlet and Neumann boundary conditions for second-order elliptic BVPs. 

The trace spaces $\mathcal{H}_D$ and $\mathcal{H}_N$ are put in duality using the sum of the inherited component-wise duality parings. That is, for $\vec{\mathbf{p}}=(\mathbf{p},q)\in\mathcal{H}_N$ and $\vec{\bm{\eta}}=(\bm{\eta},\zeta)\in\mathcal{H}_D$, we define 
\begin{equation*}
  \langle \vec{\mathbf{p}}, \vec{\bm{\eta}}\rangle := \langle\mathbf{p},\bm{\eta}\rangle_{\tau} + \langle q, \zeta\rangle_{\Gamma}.
\end{equation*}

We indicate with curly brackets the average
\begin{equation*}
\{\gamma_{\bullet}\}:= \frac{1}{2}\bra{\gamma_{\bullet}^+ + \gamma_{\bullet}^-}
\end{equation*}
 of a trace and with square brackets its jump
 \begin{equation*}
 [\gamma_{\bullet}]:=\gamma_{\bullet}^- - \gamma_{\bullet}^+
 \end{equation*}
 over the interface $\Gamma$, $\bullet=R$, $D$, $t$, $\tau$, or $n$. Corresponding notation is used for the product traces.
\begin{warning}
	Notice the sign in the jump $[\gamma]=\gamma^- - \gamma^+$, which is often taken to be the opposite in literature!
\end{warning}

\subsection{Boundary potentials}\label{sec: Boundary potentials}
By exploiting the radiating fundamental solution 
\begin{equation*}
  G_{\nu}(\mathbf{x}):=\exp\left(i\nu\abs{\mathbf{x}}\right)/4\pi\abs{\mathbf{x}}
\end{equation*}
for the scalar Helmholtz operator $\Delta-\nu^2\id$, it is shown in \cite[Sec. 4.2]{claeys2017first} that a distributional solution $\mathbf{U}\in\mathbf{L}^2(\mathbb{R}^3)$ such that $\mathbf{U}\vert_{\Omega_s}\in\mathbf{X}(\Delta,\Omega_s)$ and $\mathbf{U}\vert_{\Omega'}\in\mathbf{X}_{\text{loc}}(\Delta,\Omega')$ of the homogeneous (scaled) Hodge--Helmholtz/Laplace equation \eqref{eq: strong from coupled sys b} with constant coefficients $\eta>0$, $\kappa\geq 0$, stated in the whole of $\mathbb{R}^3$ with radiation conditions at infinity as considered in Section \ref{sec: Introduction}, affords a representation formula
\begin{equation}
  \mathbf{U} = \mathcal{SL}_{\kappa}\cdot [\mathcal{T}_{N}(\mathbf{U})] + \mathcal{DL}_{\kappa}\cdot [\mathcal{T}_{D}(\mathbf{U})]\qquad\qquad\text{in }\mathbb{R}^3\backslash\Gamma.\label{representation formula}
\end{equation}

Letting $\tilde{\kappa}=\kappa/\sqrt{n}$, the Hodge-Helmholtz single layer potential is explicitly given by

\begin{equation}\label{Hodge-Helmholtz single layer potential}
  \mathcal{SL}_{\kappa}\bra{\colvec{2}{\mathbf{p}}{q}}=-\bm{\Psi}_{\kappa}(\mathbf{p}) - \nabla\tilde{\psi}_k\bra{\text{div}_{\Gamma}(\mathbf{p})} + \nabla\psi_{\tilde{\kappa}}(q),
\end{equation}
where the Helmholtz scalar single-layer, vector single-layer and the regular potentials are written individually for  $\mathbf{p}\in\mathbf{H}^{-1/2}(\text{div}_{\Gamma},\Gamma)$ and $q\in H^{-1/2}(\Gamma)$ as
\begin{subequations}
  \begin{align}
    \psi_{\nu}(q)(\mathbf{x})&:=\int_{\Gamma}q(\mathbf{y})G_{\nu}(\mathbf{x}-\mathbf{y})\dif\sigma(\mathbf{y}), &\mathbf{x}\in\mathbb{R}^3\backslash\Gamma, \label{scalar single layer}\\
    \bm{\Psi}_{\nu}(\mathbf{p})(\mathbf{x})&:=\int_{\gamma}\mathbf{p}(\mathbf{y})G_{\nu}(\mathbf{x}-\mathbf{y})\dif\sigma(\mathbf{y}), &\mathbf{x}\in\mathbb{R}^3\backslash\Gamma,\label{vector single layer}\\
    \tilde{\psi}_{\kappa}(q)(\mathbf{x})&:=\int_{\Gamma}q(\mathbf{y})\frac{G_{\kappa}-G_{\tilde{\kappa}}}{\kappa^2}(\mathbf{x}-\mathbf{y})\dif\sigma(\mathbf{y}), &\mathbf{x}\in\mathbb{R}^3\backslash\Gamma,\label{regular potential}
  \end{align}
\end{subequations}
respectively. The expression \eqref{Hodge-Helmholtz single layer potential} is derived with \eqref{scalar single layer}-\eqref{regular potential} understood as duality pairings. However, if the essential supremum of $\mathbf{p}$, $q$ and $\text{div}_{\Gamma}(\mathbf{p})$ is bounded, then they can safely be computed as improper integrals \cite[Rmk. 4.2]{claeys2017first}. These classical potentials satisfy
\begin{subequations}
  \begin{align}
    -\,\text{div}\,\nabla \psi_{\tilde{\kappa}}(q)&= \tilde{\kappa}^2\psi_{\tilde{\kappa}}(q),\label{scalar helmholtz for scalar single layer}\\
    -\,\Delta \bm{\Psi}_{\kappa}(\mathbf{p}) &= \kappa^2\bm{\Psi}_{\kappa}(\mathbf{p})\label{vector helmholtz for vector single layer},\\
    -\text{div}\,\nabla\tilde{\psi}_{\kappa}(q) &= \psi_{\kappa}(q) + \frac{1}{\eta}\psi_{\tilde{\kappa}}(q),
  \end{align}
\end{subequations}
and the identity \cite[Lem. 2.3]{maccamy1984solution}
\begin{align}\label{scalar and vector single layer potential identity}
  \text{div}\,\bm{\Psi}_{\nu}(\mathbf{p}) &= \psi_{\nu}\left(\text{div}_{\Gamma}\mathbf{p}\right) & &\forall\,\mathbf{p}\in \mathbf{H}^{-1/2}(\text{div}_\Gamma,\Gamma).
\end{align}
The mapping properties of $\psi_{\nu}$, $\bm{\Psi}_{\nu}$,  $\nabla\psi_{\tilde{\kappa}}$ and $\nabla\tilde{\psi}_{\kappa}$ are detailed in \cite[Sec. 5]{claeys2017first}.

Ultimately, we will resort to a Fredholm alternative argument to prove well-posedness of the coupled system. It is therefore evident that the compactness properties of the boundary integral operators introduced in the next Lemma will be extensively used both explicitly and implicitly---notably through exploiting the results found in \cite[Sec. 6]{claeys2017first}.

From \cite[Lem. 3.9.8]{sauter2010boundary} and \cite[Lem. 7]{buffa2003galerkin}, we know that for any $\nu\geq 0$, the following operators are compact:
\begin{subequations}
    \begin{align}
      \gamma^{\pm}\bra{\psi_{\nu}-\psi_{0}}:&\,H^{-1/2}(\Gamma)\rightarrow H^{1/2}(\Gamma),\label{eq: compact nu 0 phi}\\
      \gamma^{\pm}_n\bra{\nabla\psi_{\nu}-\nabla\psi_{0}}:&\,H^{-1/2}(\Gamma)\rightarrow H^{-1/2}(\Gamma),\\
      \gamma^{\pm}_{t}\bra{\bm{\Psi}_{\nu}-\bm{\Psi}_{0}}:&\,\mathbf{H}^{-1/2}(\text{div}_{\Gamma},\Gamma)\rightarrow \mathbf{H}^{-1/2}(\text{curl}_{\Gamma},\Gamma),\\
      \gamma^{\pm}_n\nabla\tilde{\psi}_{\nu}:&\,H^{-1/2}(\Gamma)\rightarrow H^{-1/2}(\Gamma).\label{eq: compact nu 0 phi tilde}\
    \end{align}
    \end{subequations}
Compactness of the second boundary integral operator listed immediately entails compactness of 
$$\nu^2\gamma^{\pm}_n\nabla\tilde{\psi}_{\nu}=\gamma^{\pm}_n\bra{\nabla\psi_{\nu}-\nabla\psi_{\tilde{\nu}}}=\gamma^{\pm}_n\bra{\nabla\psi_{\nu}-\nabla\psi_{0}} - \bra{\gamma^{\pm}_n\bra{\nabla\psi_{\tilde{\nu}}-\nabla\psi_{0}}}$$
by linearity. While it seems that blow-up occurs in $\tilde{\psi}_{\nu}$ as $\nu\rightarrow 0$, $\nabla\tilde{\psi}_{\nu}$ happens to be an entire function of $\nu$ that vanishes at $\nu = 0$ \cite[Sec. 4.1]{claeys2017first}.

The Hodge--Helmholtz double layer potential is given for boundary data $\bm{\eta}\in\mathbf{H}^{-1/2}(\text{curl}_{\Gamma},\Gamma)$ and $\xi\in H^{1/2}(\Gamma)$ by
\begin{equation}\label{double layer potential}
  \mathcal{DL}_{\kappa}\bra{\colvec{2}{\bm{\eta}}{\xi}} := \mathbf{curl}\,\bm{\Psi}_{\kappa}(\bm{\eta}\times\mathbf{n}) + \Upsilon_{\kappa}(\xi).
\end{equation}
We recognize in \eqref{double layer potential} the (electric) Maxwell double layer potential (c.f. \cite[Sec. 4]{hiptmair2003coupling}, \cite[Eq. 28]{buffa2003galerkin}) and the normal vector single-layer potential
\begin{align*}
  \Upsilon_{\kappa}(\xi)&:=\int_{\Gamma}\xi(\mathbf{y})\mathbf{G}_{\kappa}(\mathbf{x}-\mathbf{y})\mathbf{n}(\mathbf{y})\dif\sigma(\mathbf{y}), & \mathbf{x}\in\mathbb{R}^3\backslash\Gamma,
\end{align*}
in which appears the matrix-valued fundamental solution 
\begin{equation*}
  \mathbf{G}_{\kappa} := G_{\kappa}\id +{\kappa^{-2}}\nabla^2\bra{G_{\kappa}-G_{\tilde{\kappa}}}
\end{equation*}
satisfying $-\Delta_{\eta}\mathbf{G}_{\kappa} - \kappa^2\mathbf{G}_{\kappa}=\delta_0\,\id$ exploited in \cite{claeys2017first} and detailed in \cite[App. A]{hazard1996solution}. This surface potential satisfies
\begin{equation}
  \label{eq: Hodge-Helmholtz Upsilon}
  -\Delta_{\eta}\Upsilon_{\kappa}(\xi)=\kappa^2\Upsilon_{\kappa}(\xi)
\end{equation}
and the identity \cite[Sec.5.4]{claeys2017first}
\begin{math}
  \mathbf{curl}\,\Upsilon_{\kappa}(\xi)=\mathbf{curl}\bm{\Psi}_{\kappa}(\xi\mathbf{n})
\end{math}.

The mapping properties of the potentials
$\mathbf{curl}\bm{\Psi}_{\kappa}(\cdot\times\mathbf{n})$ and $ \Upsilon_{\kappa}$ are
detailed in \cite[Sec. 5]{claeys2017first}.

\subsection{Integral operators}\label{sec: Integral operators}
In this section, we extend the analysis performed in \cite{buffa2003galerkin,hiptmair2003coupling} for the classical electric wave equation to the boundary integral operators arising from Hodge--Helmholtz and Hodge--Laplace problems.

The well-known Cald\'eron identities are obtained from \eqref{representation formula} upon taking the classical compounded traces on both sides and utilizing the jump relations
\begin{subequations}
  \begin{align}
    [\mathcal{T}_D]\cdot\mathcal{DL}_{\kappa}(\vec{\bm{\eta}}) &= \vec{\bm{\eta}}, & [\mathcal{T}_N]\cdot\mathcal{DL}_{\kappa}(\vec{\bm{\eta}}) &= 0,\quad\quad \vec{\bm{\eta}}\in\mathcal{H}_D, \label{double layer jump}\\ 
    [\mathcal{T}_D]\cdot\mathcal{SL}_{\kappa}(\vec{\mathbf{p}}) &= 0, & [\mathcal{T}_N]\cdot\mathcal{SL}_{\kappa}(\vec{\mathbf{p}}) &= \vec{\mathbf{p}},\quad\quad \vec{\mathbf{p}}\in\mathcal{H}_N, \label{single layer jump}
  \end{align}
\end{subequations}
given in \cite[Thm. 5.1]{claeys2017first}. The operator forms of the interior and exterior Cald\'eron projectors defined on $\mathcal{H}_D\times\mathcal{H}_N$, which we denote $\mathbb{P}^-_{\kappa}$ and $\mathbb{P}^+_{\kappa}$ respectively, enter the Cald\'eron identites:
\begin{subequations}
  \begin{gather}
    \underbrace{\begin{pmatrix}
        \{\mathcal{T}_D\}\cdot\mathcal{DL}_k+\frac{1}{2}\id & \{\mathcal{T}_D\}\cdot\mathcal{SL}_k\\
        \{\mathcal{T}_N\}\cdot\mathcal{DL}_k & \{\mathcal{T}_N\}\cdot\mathcal{SL}_k+\frac{1}{2}\id
      \end{pmatrix}}_{=:\mathbb{P}^-_{\kappa}}
    \begin{pmatrix}
      \mathcal{T}_D^-\mathbf{U}\\
      \mathcal{T}_N^-\mathbf{U}
    \end{pmatrix}
    = \begin{pmatrix}
      \mathcal{T}_D^-\mathbf{U}\\
      \mathcal{T}_N^-\mathbf{U}
    \end{pmatrix},\label{interior calderon}\\
    \underbrace{\begin{pmatrix}
        -\{\mathcal{T}_D\}\cdot\mathcal{DL}_k+\frac{1}{2}\id & -\{\mathcal{T}_D\}\cdot\mathcal{SL}_k\\
        -\{\mathcal{T}_N\}\cdot\mathcal{DL}_k & -\{\mathcal{T}_N\}\cdot\mathcal{SL}_k+\frac{1}{2}\id
      \end{pmatrix}}_{=:\mathbb{P}^+_{\kappa}}
    \begin{pmatrix}
      \mathcal{T}_D^+\mathbf{U}^{\text{ext}}\\
      \mathcal{T}_N^+\mathbf{U}^{\text{ext}}
    \end{pmatrix}
    = \begin{pmatrix}
      \mathcal{T}_D^+\mathbf{U}^{\text{ext}}\\
      \mathcal{T}_N^+\mathbf{U}^{\text{ext}}
    \end{pmatrix},\label{exterior calderon}
  \end{gather}
\end{subequations}

Note that $\mathbb{P}^-_{\kappa} + \mathbb{P}^+_{\kappa}=\id$ and that the range of
$\mathbb{P}^+_{\kappa}$ coincides with the kernel of $\mathbb{P}^-_{\kappa}$ and
vice-versa \cite[Sec. 5]{buffa2003galerkin}. As a consequence of the jump relations
\eqref{double layer jump}-\eqref{single layer jump}, the representation formula
\eqref{representation formula} and the existence of trace liftings, the pair of
``magnetic" and ``electric" traces
$\bra{\vec{\bm{\eta}}\,\,\,\vec{\mathbf{p}}}^{\top}\in\mathcal{H}_D\times\mathcal{H}_N$ is
valid interior or exterior Cauchy data, if and only if it lies in the kernel of
$\mathbb{P}^+_{\kappa}$ or $\mathbb{P}^-_{\kappa}$ respectively
(c.f.\cite[Lem. 6.18]{steinbach2007numerical}, \cite[Thm. 8]{buffa2003galerkin} and
\cite[Prop. 5.2]{claeys2017first}).

Inspecting equations \eqref{interior calderon}-\eqref{exterior calderon} reveals that the
Cald\'eron projectors share a common structure. They can be written as
\begin{align*}
    \mathbb{P}_{\kappa}^- = \frac{1}{2}\id + \mathbb{A}_\kappa
    &&\text{and} 
    &&\mathbb{P}_{\kappa}^+ = \frac{1}{2}\id - \mathbb{A}_\kappa,
\end{align*} and
where the Cald\'eron operator
$\mathbb{A}_\kappa:\mathcal{H_D\times\mathcal{H}_N\to\mathcal{H}_D\times\mathcal{H}_N}$ is given by
\begin{gather}
  \label{eq: Ak blocks}
  \mathbb{A}_\kappa :=\begin{pmatrix}
    \mathbb{A}_{\kappa}^{DD} & \mathbb{A}_{\kappa}^{ND}\\
    \mathbb{A}_{\kappa}^{DN} & \mathbb{A}_{\kappa}^{NN}
  \end{pmatrix} :=
  \begin{pmatrix}
    \{\mathcal{T}_D\}\cdot\mathcal{DL}_\kappa & \{\mathcal{T}_D\}\cdot\mathcal{SL}_\kappa\\
    \{\mathcal{T}_N\}\cdot\mathcal{DL}_\kappa & \{\mathcal{T}_N\}\cdot\mathcal{SL}_\kappa
  \end{pmatrix}.
\end{gather}

An analog of the operator matrix $\mathbb{A}_k$ was found convenient in the study of the
boundary integral equations of electromagnetic scattering problems
\cite[Sec. 6]{buffa2003galerkin}. It is known from \cite{claeys2017first} that the
off-diagonal blocks $\mathbb{A}_{\kappa}^{DN}$ and $\mathbb{A}_{\kappa}^{ND}$ of $\mathbb{A}_k$ independently satisfy generalized G{\aa}rding
inequalities making them of Fredholm type with index 0. Injectivity holds when $\kappa^2$
lies outside a discrete set of ``forbidden resonant frequencies'' accumulating at infinity
\cite[Sec. 3]{claeys2017first}. More explanations will be given in Section 3. In the static case $\kappa=0$, the dimensions of
$\ker\left(\{\mathcal{T}_N\}\cdot\mathcal{SL}_0\right)$ and
$\ker\left(\{\mathcal{T}_D\}\cdot\mathcal{DL}_0\right)$ agree with the
zeroth and first Betti number of $\Gamma$, respectively \cite[Sec. 7]{claeys2017first}.

In the case of the classical electric wave equation, the boundary integral operators
involved in the Cald\'eron projectors enjoy a hidden symmetry: there exists a compact linear operator
  $\mathbf{C}_k:\mathbf{H}^{-1/2}(\text{div}_\Gamma,\Gamma)\rightarrow
  \mathbf{H}^{-1/2}(\text{div}_\Gamma,\Gamma)$ such that
  \begin{equation}\label{eq: adjoint maxwell potential}
    \ip{\{\gamma_R\}\bm{\Psi}_k(\mathbf{p})}{\bm{\eta}}_{\tau} = \ip{\mathbf{p}}{\{\gamma_t\}\bm{\Psi}_{\kappa}\mathbf{curl}(\bm{\eta}\times\mathbf{n})}_{\tau} +\ip{\mathbf{C}_k\mathbf{p}}{\bm{\eta}}_{\tau}
  \end{equation}
  for all $\mathbf{p}\in\mathbf{H}^{-1/2}(\text{div}_\Gamma,\Gamma)$ and $\bm{\eta}\in\mathbf{H}^{-1/2}(\mathbf{curl}_\Gamma,\Gamma)$, cf. \cite[Lem. 5.4]{hiptmair2003coupling} and \cite[Lem. 6]{buffa2003galerkin}.

We will extend this result to the integral operators
defined for the scaled Hodge--Helmholtz/Laplace equation to better characterize the
structure of \eqref{eq: Ak blocks}. The symmetry we are about to reveal in the diagonal
blocks $\mathbb{A}_{\kappa}^{NN}$ and $\mathbb{A}_{\kappa}^{DD}$ of the Cald\'eron projectors will be crucial in the derivation of the main
T-coercivity estimate of this work. It will be exploited for complete \emph{cancellation}, \emph{up to
compact terms,} of the operators lying on the \emph{off-diagonal} of the block operator matrix
associated to the coupled variational system introduced in Section \ref{sec: coupled
  problem}. The following lemmas are required.
\begin{lemma}\label{lem: grad phi term is anti hermitian}
  There is a compact linear operator $C_k:H^{-1/2}(\Gamma)\rightarrow H^{-1/2}(\Gamma)$ such that 
  \begin{equation*}
    \langle\{\gamma_n\}\nabla\psi_{\tilde{\kappa}}(q), \xi\rangle_{\Gamma} =-\ip{q}{\{\eta\,\gamma_D\}\Upsilon_{\kappa}(\xi)}_{\Gamma} + \ip{C_kq}{\xi}_{\Gamma},
  \end{equation*}
  for all $q\in H^{-1/2}(\Gamma)$, $\xi\in H^{1/2}(\Gamma)$.
\end{lemma}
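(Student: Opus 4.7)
The plan is to apply the Hodge--Laplace Green's formula from the introduction on $\Omega_s$ and on $\Omega'$ with the particular choice $\mathbf{U}:=\nabla\psi_{\tilde{\kappa}}(q)$ and $\mathbf{V}:=\Upsilon_\kappa(\xi)$, and then extract the desired identity purely from the resulting surface terms. Both vector fields lie in $\mathbf{X}(\Delta,\Omega_s)$ and $\mathbf{X}_{\text{loc}}(\Delta,\Omega')$ by Lemmas~\ref{lem: single layers into delta} and~\ref{lem: double layers into delta}, so all four components of $\mathcal{T}_D^\pm$ and $\mathcal{T}_N^\pm$ are well-defined on each side of $\Gamma$. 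Each satisfies the homogeneous equation $-\Delta_\eta(\cdot)=\kappa^2(\cdot)$ pointwise on $\mathbb{R}^3\setminus\Gamma$: for $\mathbf{U}$, combine $\mathbf{curl}\,\nabla=0$ with $-\Delta\psi_{\tilde{\kappa}}(q)=\tilde{\kappa}^2\psi_{\tilde{\kappa}}(q)$ and $\eta\tilde{\kappa}^2=\kappa^2$; for $\mathbf{V}$, invoke \eqref{eq: Hodge-Helmholtz Upsilon}. Consequently the volume integrals in Green's formula vanish on both sides.

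Next, I would collect the jump and average information for $\mathbf{U}$ and $\mathbf{V}$ along $\Gamma$. Since $\mathbf{curl}\,\mathbf{U}=0$, every pairing featuring $\gamma_R\mathbf{U}$ drops out. From $\mathcal{SL}_\kappa((0,q))=\nabla\psi_{\tilde{\kappa}}(q)$, which follows directly from \eqref{Hodge-Helmholtz single layer potential}, together with the single-layer jumps \eqref{single layer jump}, one reads off $[\gamma_n\nabla\psi_{\tilde{\kappa}}(q)]=q$. From $\mathcal{DL}_\kappa((0,\xi))=\Upsilon_\kappa(\xi)$, which follows from \eqref{double layer potential}, and the double-layer jumps \eqref{double layer jump}, one reads off $[\eta\gamma_D\Upsilon_\kappa(\xi)]=\xi$ together with the crucial cancellations $[\gamma_R\Upsilon_\kappa(\xi)]=0$ and $[\gamma_n\Upsilon_\kappa(\xi)]=0$. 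Finally, since $\psi_{\tilde{\kappa}}(q)\in H^1_{\text{loc}}(\mathbb{R}^3)$ by Lemma~\ref{lem: single layer into H1}, its Dirichlet trace is single-valued, so $[\gamma\psi_{\tilde{\kappa}}(q)]=0$ and $[\gamma_t\nabla\psi_{\tilde{\kappa}}(q)]=[\nabla_\Gamma\gamma\psi_{\tilde{\kappa}}(q)]=0$; in particular $[\gamma_D\mathbf{U}]=-\tilde{\kappa}^2[\gamma\psi_{\tilde{\kappa}}(q)]=0$.

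Summing the exterior Green's formula with its interior counterpart (which carries the opposite sign on all boundary terms) and expanding each duality pairing via the elementary identity $a^-b^- - a^+b^+ = \{a\}[b]+[a]\{b\}$, I expect the block $\eta\langle\gamma_D\mathbf{U},\gamma_n\mathbf{V}\rangle$ to vanish completely (both relevant jumps are zero), the block $\langle\gamma_R\mathbf{V},\gamma_t\mathbf{U}\rangle$ to vanish as well (both relevant jumps are zero), and the block $\eta\langle\gamma_n\mathbf{U},\gamma_D\mathbf{V}\rangle$ to reduce, after substitution of the two nonzero jumps, to $\langle\{\gamma_n\nabla\psi_{\tilde{\kappa}}(q)\},\xi\rangle_\Gamma + \langle q,\{\eta\gamma_D\Upsilon_\kappa(\xi)\}\rangle_\Gamma$. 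Setting the total equal to zero yields the claimed identity with the choice $C_k=0$, which is trivially compact.

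What might at first appear to be a serious analytic obstacle---namely the construction of a nontrivial compact remainder $C_k$---is in fact defused by the purely algebraic relation $[\gamma_R\Upsilon_\kappa(\xi)]=0$ recorded in \eqref{double layer jump}. Without that cancellation, one would be left with a residual bilinear form of the type $\langle[\gamma_R\Upsilon_\kappa(\xi)],\nabla_\Gamma\{\gamma\psi_{\tilde{\kappa}}(q)\}\rangle_\tau$, whose compactness would then have to be argued by splitting $\psi_{\tilde{\kappa}}=\psi_0+(\psi_{\tilde{\kappa}}-\psi_0)$ and invoking the boundary-integral compactness results of Lemma~\ref{lem: compactness of boundary integrals}. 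Here no such argument is required, and the only real hurdle is the careful bookkeeping of signs and of the compounded-trace pairings from Section~\ref{subsec: compound trace spaces}.
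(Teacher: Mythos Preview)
Your overall architecture---apply Green's second identity for $-\Delta_\eta$ on each side of $\Gamma$ with $\mathbf{U}=\nabla\psi_{\tilde\kappa}(q)$ and $\mathbf{V}=\Upsilon_\kappa(\xi)$, then rewrite the surviving boundary contributions via $a^-b^--a^+b^+=\{a\}[b]+[a]\{b\}$ using the jump relations---is exactly the route taken in the paper. Your cataloguing of the jumps is correct, and the cancellations you describe do occur.

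The gap is in the exterior step. You apply Green's formula on the \emph{unbounded} complement $\Omega'$ and silently discard all boundary contributions at infinity; this is what lets you claim $C_k=0$. That formula, however, is only stated \emph{formally} in the introduction; on $\Omega'$ it is not available without either (i) a quantitative far-field argument showing that the four trace pairings on $\partial B_\rho$ tend to zero as $\rho\to\infty$, or (ii) a truncation to $\Omega'\cap B_\rho$ followed by a separate treatment of the $\partial B_\rho$ terms. Option (i) is delicate here because the Hodge--Helmholtz radiation conditions of \cite{hazard1996solution} involve \emph{two} wavenumbers $\kappa$ and $\tilde\kappa$, and one of the surviving pairings, $\langle\widehat\gamma_R\Upsilon_\kappa(\xi),\widehat\gamma_t\nabla\psi_{\tilde\kappa}(q)\rangle_{\partial B_\rho}$, couples a factor radiating at $\kappa$ with one radiating at $\tilde\kappa$; the usual Sommerfeld cancellation $u\partial_r v-v\partial_r u=o(1/\rho^2)$ does not apply directly, and a careful asymptotic analysis of the tangential components is needed. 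You supply none of this.

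The paper avoids the issue altogether by choosing option (ii): it works on $\Omega'\cap B_\rho$ for a \emph{fixed} $\rho$, keeps the $\partial B_\rho$ boundary terms explicitly, and observes that the potentials are $C^\infty$-smoothing on $\partial B_\rho$ (being far from their source $\Gamma$), so those terms define compact operators. This yields a compact---not necessarily zero---$C_k$, which is all the lemma asserts. Your argument can be repaired the same way: truncate, retain the $\partial B_\rho$ contributions, and invoke interior smoothing; you then lose the conclusion $C_k=0$ but recover the lemma as stated.
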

\begin{proof}
  This proof utilizes a strategy found in \cite[Lem. 5.4]{hiptmair2003coupling} and
  \cite[Thm. 3.9]{Buffa2002Boundary}. Let $\rho > 0 $ be such that $B_{\rho}$ is an open
  ball containing $\overline{\Omega}_s$. We will indicate with a hat
  (e.g. $\widehat{\gamma}$) the traces taken over the boundary $\partial B_{\rho}$ of that
  ball and use Green's formula to compare the following terms.
  
  On the one hand, using the scalar Helmholtz equation \eqref{scalar helmholtz for scalar single layer} and recalling that $\tilde{\kappa}=\kappa/\sqrt{\eta}$, we have
  \begin{align}
     &\langle\eta\,\hlo{ \gamma_D^-}\nabla\psi_{\tilde{\kappa}}(q), \hlo{\gamma_n^-}\Upsilon_{\kappa}(\xi)\rangle_{\Gamma}\nonumber\\
    &\qquad\qquad= \int_{\Omega_s}\eta\,\text{div}\left(\nabla\psi_{\tilde{\kappa}}(q)\right)\text{div}\Upsilon_k(\xi) +\eta\nabla\text{div}\left(\nabla\psi_{\tilde{\kappa}}(q)\right)\cdot\Upsilon_{\kappa}(\xi)\dif\mathbf{x}\nonumber\\
    &\qquad\qquad= \hly{-\int_{\Omega_s}\kappa^2\psi_{\tilde{\kappa}}(q)\text{div}\Upsilon_k(\xi)\dif\mathbf{x} -\int_{\Omega_s} \kappa^2\nabla\psi_{\tilde{\kappa}}(q)\cdot\Upsilon_{\kappa}(\xi)\dif\mathbf{x},}\label{eq: interior eq 1}
  \end{align}
  and similarly,
  \begin{align*}
    \langle\eta\,\gamma_D^+\nabla\psi_{\tilde{\kappa}}(q), \gamma_n^+\Upsilon_{\kappa}(\xi)\rangle_{\Gamma} &= \int_{\Omega'\cap B_{\rho}}\kappa^2\psi_{\tilde{\kappa}}(q)\,\text{div}\Upsilon_k(\xi) + \nabla\psi_{\tilde{\kappa}}(q)\cdot\Upsilon_{\kappa}(\xi)\dif\mathbf{x} \\
    &\qquad\qquad\hlb{+\langle\eta\,\widehat{\gamma}_D^+\nabla\psi_{\kappa}(q), \widehat{\gamma}_n^+\Upsilon_{\kappa}(\xi)\rangle_{\partial B_{\rho}}}.
  \end{align*}
  On the other hand, using \eqref{scalar helmholtz for scalar single layer} together with the scaled Hodge--Helmholtz equation \eqref{eq: Hodge-Helmholtz Upsilon}, we also have
  \begin{align}
    \langle\hlo{\gamma_n^-}&\nabla\psi_{\tilde{\kappa}}(q),
    \eta\,\hlo{\gamma_D^-}\Upsilon_{\kappa}(\xi)\rangle_{\Gamma}
    \nonumber\\
    &=  \int_{\Omega_s} \eta\,\text{div}\left(\nabla\psi_{\tilde{\kappa}}(q)\right)\text{div}\Upsilon_{\kappa}(\xi)\dif\mathbf{x}
    +
    \int_{\Omega_s}\eta\,\nabla\psi_{\tilde{\kappa}}(q)\cdot\nabla\text{div}\Upsilon_{\kappa}(\xi)\dif{\mathbf{x}}
    \nonumber\\
    &=
      \hly{-\int_{\Omega_s}\kappa^2\psi_{\tilde{\kappa}}(q)\,\text{div}\Upsilon_{\kappa}(\xi)\dif\mathbf{x}}
      +
      \hlm{\int_{\Omega_s}\nabla\psi_{\tilde{\kappa}}(q)\cdot\mathbf{curl}\,\mathbf{curl}\,\Upsilon_{\kappa}(\xi)\dif{\mathbf{x}}}
      \nonumber\\
      &\qquad\qquad \hly{-\int_{\Omega_s}\kappa^2\nabla\psi_{\tilde{\kappa}}(q)\cdot\Upsilon_{\kappa}(\xi)\dif{\mathbf{x}}}.\label{eq: interior eq 2}
    \end{align}
  Equations \eqref{eq: interior eq 1} and \eqref{eq: interior eq 2} together yield
  \begin{multline*}
    \langle\hlo{\gamma_n^-}\nabla\psi_{\tilde{\kappa}}(q), \eta\,\hlo{\gamma_D^-}\Upsilon_{\kappa}(\xi)\rangle_{\Gamma} = \langle\eta\,\hlo{\gamma_D^-}\nabla\psi_{\kappa}(q), \hlo{\gamma_n^-}\Upsilon_{\kappa}(\xi)\rangle_{\Gamma}\\ +\hlm{\int_{\Omega_s}\nabla\psi_{\tilde{\kappa}}(q)\cdot\mathbf{curl}\,\mathbf{curl}\,\Upsilon_{\kappa}(\xi)\dif{\mathbf{x}}}.
  \end{multline*}
  Similarly, the terms involving the exterior traces satisfy
  \begin{align*}
    \langle\gamma_n^+\nabla\psi_{\tilde{\kappa}}(q), \eta\,\gamma_D^+\Upsilon_{\kappa}(\xi)\rangle_{\Gamma} &=\langle\eta\,\gamma_D^+\nabla\psi_{\kappa}(q), \gamma_n^+\Upsilon_{\kappa}(\xi)\rangle_{\Gamma}\\
    &\quad\qquad- \hlb{\langle\eta\,\widehat{\gamma}_D^+\nabla\psi_{\kappa}(q), \widehat{\gamma}_n^+\Upsilon_{\kappa}(\xi)\rangle_{\partial B_{\rho}}}\\
    &\quad\qquad-\hlm{\int_{\Omega'\cap B_{\rho}}\nabla\psi_{\tilde{\kappa}}(q)\cdot\mathbf{curl}\,\mathbf{curl}\,\Upsilon_{\kappa}(\xi)\dif{\mathbf{x}}}\\
    &\quad\qquad+ \hlb{\langle\widehat{\gamma}_n^+\nabla\psi_{\tilde{\kappa}}(q), \eta\,\widehat{\gamma}_D^+\Upsilon_{\kappa}(\xi)\rangle_{\partial B_{\rho}}.}
  \end{align*}
  From the first row of the jump properties \cite[Sec. 5]{claeys2017first}
  \begin{subequations}
    \begin{align}
      &[\gamma_D]\nabla\psi_{\tilde{\kappa}}(q) = 0, & &[\gamma_n]\Upsilon_{\kappa}(\xi) = 0,\\
      &[\gamma_D]\Upsilon_{\kappa}(\xi) = \xi/\eta, & &[\gamma_n]\nabla\psi_{\tilde{\kappa}}(q)=q \label{non-vanishing jumps psi upsilon},
    \end{align}
  \end{subequations}
  we obtain, by gathering the above results, integrating by parts again and using the fact that $\mathbf{curl}\circ\nabla\equiv 0$, 
  \begin{align}
    \label{switching is compact perturbation psi upsilon}
     \langle\gamma_n^-&\nabla\psi_{\tilde{\kappa}}(q),
    \eta\,\gamma_D^-\Upsilon_{\kappa}(\xi)\rangle_{\Gamma} \nonumber \\ & = 
    \langle\eta\,\gamma_D^+\nabla\psi_{\kappa}(q),
    \gamma_n^+\Upsilon_{\kappa}(\xi)\rangle_{\Gamma}\nonumber
    +
    \int_{\Omega_s}\kappa^2\nabla\psi_{\tilde{\kappa}}(q)\cdot\bm{\Psi}_{\kappa}(\xi\mathbf{n})\dif\mathbf{x}
    \\
    &=  \langle\gamma_n^+\nabla\psi_{\tilde{\kappa}}(q),
    \eta\,\gamma_D^+\Upsilon_{\kappa}(\xi)\rangle_{\Gamma} +
    \hlm{\int_{B_{\rho}}\nabla\psi_{\tilde{\kappa}}(q)\cdot\mathbf{curl}\,\mathbf{curl}\,\Upsilon_{\kappa}(\xi)\dif{\mathbf{x}}}\nonumber
    \\
    &\qquad\hlb{+ \langle\eta\,\widehat{\gamma}_D^+\nabla\psi_{\kappa}(q),
      \widehat{\gamma}_n^+\Upsilon_{\kappa}(\xi)\rangle_{\partial B_{\rho}}
      -\langle\widehat{\gamma}_n^+\nabla\psi_{\tilde{\kappa}}(q),
      \eta\,\widehat{\gamma}_D^+\Upsilon_{\kappa}(\xi)\rangle_{\partial B_{\rho}}.}
    \nonumber\\
    &=  \langle\gamma_n^+\nabla\psi_{\tilde{\kappa}}(q),
    \eta\,\gamma_D^+\Upsilon_{\kappa}(\xi)\rangle_{\Gamma} +
    \hlm{\langle\gamma_t\nabla\psi_{\tilde{\kappa}}(q),
      \gamma_R\Upsilon_{\kappa}(\xi)\rangle_{\partial B_{\rho}}}\nonumber
    \\
    &\qquad\hlb{+ \langle\eta\,\widehat{\gamma}_D^+\nabla\psi_{\kappa}(q), \widehat{\gamma}_n^+\Upsilon_{\kappa}(\xi)\rangle_{\partial B_{\rho}} -\langle\widehat{\gamma}_n^+\nabla\psi_{\tilde{\kappa}}(q), \eta\,\widehat{\gamma}_D^+\Upsilon_{\kappa}(\xi)\rangle_{\partial B_{\rho}}.}
  \end{align}
  Fortunately, when restricted to domains away from $\Gamma$, the potentials are $C^{\infty}$-smoothing. Hence, their evaluation on $\partial B_{\rho}$, the highlighted terms in \eqref{switching is compact perturbation psi upsilon}, induce compact operators. This shows that for some compact operator $C_k:H^{-1/2}(\Gamma)\rightarrow H^{-1/2}(\Gamma)$,
  \begin{equation}\label{identity with compact upsilon psi}
    \langle\gamma_n^-\nabla\psi_{\tilde{\kappa}}(q), \eta\,\gamma_D^-\Upsilon_{\kappa}(\xi)\rangle_{\Gamma} = \langle\gamma_n^+\nabla\psi_{\tilde{\kappa}}(q), \eta\,\gamma_D^+\Upsilon_{\kappa}(\xi)\rangle_{\Gamma} + \ip{C_k q}{\xi}_{\Gamma}.
  \end{equation}
  
  The jump identities \eqref{non-vanishing jumps psi upsilon} for the potentials yield formulas of the form $\{\gamma_{\bullet}\}K = \gamma_{\bullet}^{\pm}K\pm (1/2)\id$, where $\bullet=n,\,D$ and $K=\nabla\psi_{\tilde{\kappa}},\,\Upsilon_{\kappa}$ accordingly. Substituting each one-sided trace involved in the two leftmost duality pairings of \eqref{identity with compact upsilon psi} for the integral operators using these equations completes the proof.
\end{proof}
\begin{lemma}\label{lem: symmetry of two last terms single layer}
  For all  $\mathbf{p}\in\mathbf{H}^{-1/2}(\emph{div}_\Gamma,\Gamma)$ and $\xi\in H^{1/2}(\Gamma)$, we have
  \begin{equation*}
    \ip{\mathbf{p}}{\gamma^{\pm}_t\Upsilon_{\kappa}(\xi)}_{\tau}  = \langle\gamma^{\pm}_n\bm{\Psi}_{\kappa}(\mathbf{p}),\xi \rangle_{\Gamma} + \langle\gamma^{\pm}_n\nabla\tilde{\psi}_{\kappa}(\text{\emph{div}}_{\Gamma}(\mathbf{p})), \xi\rangle_{\Gamma}.
  \end{equation*} 
\end{lemma}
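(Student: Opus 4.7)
The plan is to prove the identity by direct manipulation of the defining surface integrals, exploiting two symmetries: (i) the matrix symmetry $\mathbf{G}_\kappa^\top = \mathbf{G}_\kappa$ (automatic, as $G_\kappa\id$ is a scalar multiple of the identity and $\nabla^2(G_\kappa-G_{\tilde\kappa})$ is a Hessian) together with the spatial symmetry $G_\kappa(\mathbf{x}-\mathbf{y})=G_\kappa(\mathbf{y}-\mathbf{x})$, and (ii) surface integration by parts against the tangential density $\mathbf{p}$, producing $\text{div}_\Gamma\mathbf{p}$. All three potentials appearing in the statement---$\Upsilon_\kappa(\xi)$, $\bm\Psi_\kappa(\mathbf{p})$ and $\nabla\tilde\psi_\kappa(\text{div}_\Gamma\mathbf{p})$---lie in $\mathbf{H}^1_{\mathrm{loc}}(\mathbb{R}^3)$ by Lemmas \ref{lem: single layer into H1}, \ref{lem: single layers into delta} and \ref{lem: normal vector single layer into H1}, so their $\gamma_t^\pm$, $\gamma_n^\pm$ traces are single-valued; the $\pm$ in the statement is thus superfluous and one really has a single identity to prove.

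\textbf{Transposition via Fubini.} By continuity of all the potentials between the relevant Sobolev trace spaces, it suffices to verify the identity for smooth, compactly supported, tangential $\mathbf{p}$ and smooth $\xi$. For such densities, tangentiality of $\mathbf{p}$ gives $\mathbf{p}\cdot\gamma_t\Upsilon_\kappa(\xi) = \mathbf{p}\cdot\gamma\Upsilon_\kappa(\xi)$; unfolding the definition of $\Upsilon_\kappa$, applying Fubini, and transposing $\mathbf{G}_\kappa$ yields
\begin{equation*}
\int_\Gamma \mathbf{p}(\mathbf{x})\cdot\Upsilon_\kappa(\xi)(\mathbf{x})\,d\sigma(\mathbf{x}) = \int_\Gamma \xi(\mathbf{y})\,\mathbf{n}(\mathbf{y})\cdot\!\!\left[\int_\Gamma \mathbf{G}_\kappa(\mathbf{x}-\mathbf{y})\mathbf{p}(\mathbf{x})\,d\sigma(\mathbf{x})\right]\!d\sigma(\mathbf{y}).
\end{equation*}

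\textbf{Key reduction of the inner kernel.} It remains to recognize the bracketed integral as $\bm\Psi_\kappa(\mathbf{p})(\mathbf{y}) + \nabla\tilde\psi_\kappa(\text{div}_\Gamma\mathbf{p})(\mathbf{y})$. Splitting $\mathbf{G}_\kappa$ into its two pieces, the $G_\kappa\id$ part immediately gives $\bm\Psi_\kappa(\mathbf{p})(\mathbf{y})$ by the spatial symmetry of $G_\kappa$. For the Hessian piece, set $f:=(G_\kappa-G_{\tilde\kappa})/\kappa^2$ and use $\partial_{x_i}\partial_{x_j}f(\mathbf{x}-\mathbf{y}) = \partial_{y_i}\partial_{y_j}f(\mathbf{x}-\mathbf{y})$ to pull one outer derivative $\nabla_{\mathbf{y}}$ through the surface integral. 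The remaining integrand is $\nabla_{\mathbf{y}}f\cdot\mathbf{p}(\mathbf{x}) = -\nabla_{\mathbf{x}}f\cdot\mathbf{p} = -\nabla_\Gamma f\cdot\mathbf{p}$, where the last equality uses tangentiality of $\mathbf{p}$. Surface integration by parts against $\mathbf{p}$ converts this to $f(\mathbf{x}-\mathbf{y})\,\text{div}_\Gamma\mathbf{p}(\mathbf{x})$, so the inner integral collapses to $\tilde\psi_\kappa(\text{div}_\Gamma\mathbf{p})(\mathbf{y})$; restoring the outer $\nabla_{\mathbf{y}}$ produces $\nabla\tilde\psi_\kappa(\text{div}_\Gamma\mathbf{p})(\mathbf{y})$ as claimed.

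\textbf{Main obstacle and conclusion.} Substituting back gives $\langle\xi,\gamma_n(\bm\Psi_\kappa(\mathbf{p}) + \nabla\tilde\psi_\kappa(\text{div}_\Gamma\mathbf{p}))\rangle_\Gamma$, which is the right-hand side; a density argument then extends the identity to $\mathbf{p}\in\mathbf{H}^{-1/2}(\text{div}_\Gamma,\Gamma)$ and $\xi\in H^{1/2}(\Gamma)$. The most delicate step is the surface integration by parts, because $\nabla_\Gamma f$ is mildly singular at $\mathbf{x}=\mathbf{y}$; however, the cancellation in $G_\kappa-G_{\tilde\kappa}$ strictly weakens the singularity and makes the manipulation rigorous already for smooth $\mathbf{p}$. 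An alternative route would mimic the proof of Lemma \ref{lem: grad phi term is anti hermitian} via Green's formulae on $\Omega_s$ and $\Omega'\cap B_\rho$ with $\mathbf{U}=\mathcal{SL}_\kappa(\mathbf{p},0)$ and $\mathbf{V}=\Upsilon_\kappa(\xi)$ (both annihilated by $-\Delta_\eta-\kappa^2$), but the direct Fubini computation sketched here avoids the auxiliary ball $B_\rho$ entirely and explains why \emph{no} compact remainder appears on the right-hand side of this lemma.
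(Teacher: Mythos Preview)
Your proof is correct and follows essentially the same route as the paper: expand the pairing as a double surface integral using tangentiality of $\mathbf{p}$, split $\mathbf{G}_\kappa$ into its scalar and Hessian parts, identify the scalar piece with $\bm{\Psi}_\kappa(\mathbf{p})$, and handle the Hessian piece by the adjointness of $\text{div}_\Gamma$ and $-\nabla_\Gamma$ (surface integration by parts) to produce $\nabla\tilde\psi_\kappa(\text{div}_\Gamma\mathbf{p})$. The only cosmetic difference is that the paper runs the Hessian computation from the right-hand side toward the left, whereas you run it from left to right; your remark that the $\pm$ is immaterial because all three potentials lie in $\mathbf{H}^1_{\mathrm{loc}}(\mathbb{R}^3)$ is a nice clarification the paper leaves implicit.
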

\begin{proof}
  In the following calculations, the boundary integrals are to be understood as duality pairings. Since $\mathbf{p}\in\mathbf{L}^2_t(\Gamma)$ is a tangent vector field lying in the image of $\gamma_t$, the tangential trace operator can safely be dropped in expanding these integrals using the definitions of Section \ref{sec: Boundary potentials}. On the one hand, this leads to
  \begin{align*}
    \ip{\mathbf{p}}{\gamma^{\pm}_t\Upsilon_{\kappa}(\xi)}_{\tau}  &= \int_{\Gamma}\int_{\Gamma}\xi(\mathbf{y})\mathbf{p}(\mathbf{x})\cdot\bra{\mathbf{G}_{\kappa}(\mathbf{x}-\mathbf{y})\mathbf{n}(\mathbf{y})}\dif\sigma(\mathbf{y})\dif\sigma(\mathbf{x})\\
    &= \int_{\Gamma}\int_{\Gamma}\xi(\mathbf{y})G_{\kappa}(\mathbf{x}-\mathbf{y})\mathbf{p}(\mathbf{x})\cdot \mathbf{n}(\mathbf{y})\dif\sigma(\mathbf{y})\dif\sigma(\mathbf{x})\\
    &\qquad\hly{+\int_{\Gamma}\int_{\Gamma}\xi(\mathbf{y})\mathbf{p}(\mathbf{x})\cdot \bra{\nabla^2\tilde{G}_{\kappa}(\mathbf{x}-\mathbf{y})\mathbf{n}(\mathbf{y})}\dif\sigma(\mathbf{y})\dif\sigma(\mathbf{x}),}
  \end{align*}
  where $\tilde{G}_{\kappa}:=\bra{G_{\kappa}-G_{\tilde{\kappa}}}/\kappa^2$.
  
  On the other hand, the same observation implies that $\langle \mathbf{p},\nabla_\Gamma\gamma\mathbf{V})\rangle_{\tau}=\langle \mathbf{p},\gamma\nabla\mathbf{V})\rangle_{\tau}$ for any $\mathbf{V}\in\mathbf{H}_{\text{loc}}^1(\mathbb{R}^3)$, and thus that
  \begin{align*}
    \langle\gamma^{\pm}_n\nabla\tilde{\psi}_{\kappa}(\text{div}_{\Gamma}(\mathbf{p})), \xi\rangle_{\Gamma} &=\int_{\gamma}\int_{\gamma}\xi(\mathbf{y})\mathbf{n}(\mathbf{y})\cdot\nabla\tilde{G}_{\kappa}\bra{\mathbf{y}-\mathbf{x}}\text{div}_{\Gamma}\bra{\mathbf{p}(\mathbf{x})}\dif\sigma(\mathbf{y})\dif\sigma(\mathbf{x})\\
    &=-\int_{\gamma}\int_{\gamma}\xi(\mathbf{y})\mathbf{p}(x)\nabla_{\mathbf{x}}\bra{\mathbf{n}(\mathbf{y})\cdot\nabla\tilde{G}_{\kappa}\bra{\mathbf{y}-\mathbf{x}}}\dif\sigma(\mathbf{y})\dif\sigma(\mathbf{x})\\
    &=\hly{\int_{\gamma}\int_{\gamma}\xi(\mathbf{y})\mathbf{p}(x)\bra{\nabla^2\tilde{G}_{\kappa}(\mathbf{x}-\mathbf{y})\mathbf{n}(\mathbf{y})}\dif\sigma(\mathbf{y})\dif\sigma(\mathbf{x}),}
  \end{align*}
  where we have remembered that the tangential divergence defined in Section \ref{sec: classical traces} was adjoint to the negative surface gradient. Recognizing the Helmholtz vector single-layer potential in the first expression on the right hand side concludes the proof.
\end{proof}

\begin{proposition}
  \label{prop: adjointness}
    There exists a compact operator $\mathcal{C}_k:\mathcal{H}_N\rightarrow \mathcal{H}_N$ such that 
    \begin{equation*}
      \ip{\mathbb{A}^{NN}_\kappa(\vec{\mathbf{p}})}{\vec{\bm{\eta}}} = -\ip{\vec{\mathbf{p}}}{\mathbb{A}^{DD}_{\kappa}(\vec{\bm{\eta}})} + \ip{\mathcal{C}_k\vec{\mathbf{p}}}{\vec{\bm{\eta}}}
    \end{equation*}
    for all $\vec{\bm{\eta}}:=\bra{\bm{\eta},\,\, \xi}^{\top}\in\mathcal{H}_D$ and
    $\vec{\mathbf{p}}:=(\mathbf{p},\,\, q)^{\top}\in\mathcal{H}_N$.
  \end{proposition}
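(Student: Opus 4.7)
The plan is to expand both sides of the claimed identity using the definitions of $\mathcal{T}_N$, $\mathcal{T}_D$, $\mathcal{SL}_\kappa$, $\mathcal{DL}_\kappa$ and the product duality pairing on $\mathcal{H}_N\times\mathcal{H}_D$, and then to apply the three preceding lemmas (Lemmas \ref{lem: adjoint maxwell potential}, \ref{lem: grad phi term is anti hermitian} and \ref{lem: symmetry of two last terms single layer}) term-by-term to establish the claim up to compact perturbations.

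Writing $\vec{\mathbf{p}}=(\mathbf{p},q)^\top$ and $\vec{\bm{\eta}}=(\bm{\eta},\xi)^\top$, the left-hand side splits as
\begin{equation*}
\langle\mathbb{A}^{NN}_{\kappa}\vec{\mathbf{p}},\vec{\bm{\eta}}\rangle = \langle \{\gamma_R\}\mathcal{SL}_\kappa\vec{\mathbf{p}},\bm{\eta}\rangle_\tau + \langle \{\gamma_n\}\mathcal{SL}_\kappa\vec{\mathbf{p}},\xi\rangle_\Gamma,
\end{equation*}
and analogously for $\langle \vec{\mathbf{p}},\mathbb{A}^{DD}_\kappa\vec{\bm{\eta}}\rangle$. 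Because $\mathbf{curl}\circ\nabla=0$, the trace $\{\gamma_R\}=-\{\gamma_\tau\}\mathbf{curl}$ annihilates the gradient terms $-\nabla\tilde{\psi}_\kappa(\text{div}_\Gamma\mathbf{p})+\nabla\psi_{\tilde{\kappa}}(q)$ inside $\mathcal{SL}_\kappa$; symmetrically, $\text{div}\circ\mathbf{curl}=0$ ensures that $\{\gamma_{D,\eta}\}$ kills the $\mathbf{curl}\,\bm{\Psi}_\kappa(\bm{\eta}\times\mathbf{n})$ term in $\mathcal{DL}_\kappa$. This leaves exactly the same non-vanishing pairings on each side, which I would match one by one.

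Concretely, the Maxwell-type piece $-\langle\{\gamma_R\}\bm{\Psi}_\kappa(\mathbf{p}),\bm{\eta}\rangle_\tau$ transforms, by Lemma \ref{lem: adjoint maxwell potential}, into $-\langle\mathbf{p},\{\gamma_t\}\mathbf{curl}\,\bm{\Psi}_\kappa(\bm{\eta}\times\mathbf{n})\rangle_\tau$ plus a compact remainder, which is precisely $-\langle \mathbf{p}, \{\gamma_t\}\mathcal{DL}_\kappa\vec{\bm{\eta}}\rangle_\tau$ restricted to the $\mathbf{curl}\bm{\Psi}_\kappa$-component. The two terms $-\langle\{\gamma_n\}\bm{\Psi}_\kappa(\mathbf{p}),\xi\rangle_\Gamma$ and $-\langle\{\gamma_n\}\nabla\tilde{\psi}_\kappa(\text{div}_\Gamma\mathbf{p}),\xi\rangle_\Gamma$ combine, via Lemma \ref{lem: symmetry of two last terms single layer} applied with averaged traces, into $-\langle\mathbf{p},\{\gamma_t\}\Upsilon_\kappa(\xi)\rangle_\tau$, supplying the missing piece of $-\langle \mathbf{p}, \{\gamma_t\}\mathcal{DL}_\kappa\vec{\bm{\eta}}\rangle_\tau$. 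Finally, the remaining scalar term $\langle\{\gamma_n\}\nabla\psi_{\tilde{\kappa}}(q),\xi\rangle_\Gamma$ is converted, by Lemma \ref{lem: grad phi term is anti hermitian}, into $-\langle q,\{\eta\,\gamma_D\}\Upsilon_\kappa(\xi)\rangle_\Gamma$ plus a compact remainder, and $\{\eta\,\gamma_D\}\Upsilon_\kappa(\xi) = \{\gamma_{D,\eta}\}\mathcal{DL}_\kappa\vec{\bm{\eta}}$ because the $\mathbf{curl}$-component is annihilated by $\gamma_{D,\eta}$.

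Summing these contributions yields $\langle\mathbb{A}^{NN}_\kappa\vec{\mathbf{p}},\vec{\bm{\eta}}\rangle = -\langle\vec{\mathbf{p}},\mathbb{A}^{DD}_\kappa\vec{\bm{\eta}}\rangle + \langle\mathcal{C}_k\vec{\mathbf{p}},\vec{\bm{\eta}}\rangle$, where $\mathcal{C}_k:\mathcal{H}_N\to\mathcal{H}_N$ is the block-diagonal assembly of the compact operators produced by the three lemmas and is manifestly compact by Lemma \ref{lem: compactness of boundary integrals}. The main obstacle I anticipate is purely bookkeeping: tracking signs carefully (especially the convention $\gamma_R=-\gamma_\tau\mathbf{curl}$ and the factor $\eta$ hidden in $\gamma_{D,\eta}$), making sure that every gradient/divergence annihilation argument is justified in the appropriate trace space, and confirming that the various compact remainders do assemble into a single compact operator on the product space $\mathcal{H}_N$ rather than mixing in a way that requires closed-range arguments.
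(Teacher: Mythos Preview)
Your proposal is correct and follows essentially the same route as the paper: expand $\langle\{\mathcal{T}_N\}\mathcal{SL}_\kappa\vec{\mathbf{p}},\vec{\bm{\eta}}\rangle$ and $\langle\vec{\mathbf{p}},\{\mathcal{T}_D\}\mathcal{DL}_\kappa\vec{\bm{\eta}}\rangle$, kill the gradient contributions under $\{\gamma_R\}$ and the curl contribution under $\{\gamma_D\}$ using $\mathbf{curl}\circ\nabla=0$ and $\text{div}\circ\mathbf{curl}=0$, and then match the surviving terms one-to-one via Lemmas~\ref{lem: adjoint maxwell potential}, \ref{lem: grad phi term is anti hermitian} and \ref{lem: symmetry of two last terms single layer}. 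Two minor remarks: Lemma~\ref{lem: symmetry of two last terms single layer} is an exact identity, so the compact remainder $\mathcal{C}_k$ is assembled only from the perturbations in Lemmas~\ref{lem: adjoint maxwell potential} and \ref{lem: grad phi term is anti hermitian} (not from Lemma~\ref{lem: compactness of boundary integrals}); and it is worth noting explicitly that those two compact pieces act on the $\mathbf{p}$- and $q$-components separately, so their block-diagonal sum is indeed a compact endomorphism of $\mathcal{H}_N$.
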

\begin{proof}
  Recall that $\mathbb{A}^{NN}_{\kappa} = \{\mathcal{T}_N\}\cdot\mathcal{S}\mathcal{L}_{\kappa}$ and $\mathbb{A}^{DD}_{\kappa} = \{\mathcal{T}_D\}\cdot\mathcal{D}\mathcal{L}_{\kappa}$. Since $\mathbf{curl}\circ\nabla =0$,
  $\langle\{\gamma_R\}\nabla\psi_{\tilde{k}}(q),\bm{\eta}\rangle_{\tau} = 0$ and $\langle\{\gamma_R\}\nabla\tilde{\psi}_{k}(\text{div}_{\Gamma}(\mathbf{p})),\bm{\eta}\rangle_{\tau} = 0$; therefore,
  \begin{align}
    \ip{\{\mathcal{T}_{N}\}\cdot\mathcal{SL}_k(\vec{\mathbf{p}})}{\vec{\bm{\eta}}}
    &= \langle-\{\gamma_R\}\bm{\Psi}_{\kappa}(\mathbf{p}),\bm{\eta}\rangle_{\tau} + \langle\{\gamma_n\}\nabla\psi_{\tilde{\kappa}}(q), \xi\rangle_{\Gamma}\nonumber\\
    &\qquad- \langle\{\gamma_n\}\bm{\Psi}_{\kappa}(\mathbf{p}),\xi \rangle_{\Gamma}  -\langle\{\gamma_n\}\nabla\tilde{\psi}_{\kappa}(\text{div}_{\Gamma}(\mathbf{p})), \xi\rangle_{\Gamma}.\label{eq: average single layer}
  \end{align}
  Since $\text{div}\circ\mathbf{curl}=0$, we also have $\{\gamma_D\}\,\mathbf{curl}\bm{\Psi_{\kappa}} =0$. Hence,  we need to compare \eqref{eq: average single layer} with 
  \begin{align*}
    \ip{\vec{\mathbf{p}}}{\{\mathcal{T}_{D}\}\cdot\mathcal{DL}_k(\vec{\bm{\eta}})}&=\ip{\mathbf{p}}{\{\gamma_t\}\mathbf{curl}\bm{\Psi}_k(\bm{\eta}\times\mathbf{n})}_{\tau} + \ip{q}{\{\eta\,\gamma_D\}\Upsilon_{\kappa}(\xi)}_{\Gamma}\\ 
    &\qquad+ \ip{\mathbf{p}}{\{\gamma_t\}\Upsilon_{\kappa}(\xi)}_{\tau}.
  \end{align*}
  The desired result follows by combining the known symmetry result from \eqref{eq: adjoint maxwell potential} with Lemma \ref{lem: grad phi term is anti hermitian} and Lemma \ref{lem: symmetry of two last terms single layer}.
\end{proof}

As consequence of Proposition \ref{prop: adjointness}, we have \begin{equation*}
    \bra{\mathbb{P}^+_{\kappa}}_{11}^* \hat{=}\bra{\mathbb{P}^-_{\kappa}}_{22},
\end{equation*} 
where $\hat{=}$ is used to indicate equality up to compact terms.

\section{Coupled problem}\label{sec: coupled problem}
\par In this section, we derive a variational formulation for the system \eqref{eq: strong from coupled sys a}-\eqref{eq: strong from coupled sys d} which couples a mixed variational formulation defined in the interior domain to a boundary integral equation of the first kind that arises in the exterior domain.

As proposed in \cite{arnold2006finite}, we introduce a new variable $P= -\text{div}\bra{\epsilon(\mathbf{x})\mathbf{U}}$ into equation \eqref{eq: strong from coupled sys a} to dispense with trial spaces contained in $\mathbf{H}(\mathbf{curl},\Omega_s)\cap\mathbf{H}(\text{div},\Omega_s)$. Applying Green's formula \eqref{curl curl integral identity} in $\Omega_s$, we obtain
\begin{align}
  \begin{split}
    \int_{\Omega_s}\mu^{-1}\,\mathbf{curl}\,\mathbf{U}\cdot\mathbf{curl}\,\mathbf{V}\dif\mathbf{x} + \int_{\Omega_s}\epsilon\,\nabla P\cdot\mathbf{V}\dif\mathbf{x} \qquad {} & \\ -\omega^2\int_{\Omega_s}\epsilon\,\mathbf{U}\cdot\mathbf{V}\dif\mathbf{x} 
    +\langle \gamma_{R,\mu}^{-}\mathbf{U},\gamma^-_t\mathbf{V}\rangle_{\tau} &= \bra{\mathbf{J},\mathbf{V}}_{\Omega_s},\\
    \int_{\Omega_s}P\,Q\dif\mathbf{x} - \int_{\Omega_s}\epsilon\,\mathbf{U}\cdot\nabla Q \dif\mathbf{x} +\langle \gamma^{-}_{n,\epsilon}\mathbf{U},\gamma^{-} Q\rangle_{\Gamma}&= 0
    \label{eq: green  mixed}
  \end{split}
\end{align}              
for all $\mathbf{V}\in\mathbf{H}\bra{\mathbf{curl}, \Omega_s}$, $Q\in H^1(\Omega_s)$. The volume integrals in these equations enter the interior symmetric bi-linear form
\begin{multline}\label{eq: sym bilinear form}
  \mathfrak{B}_{\kappa}\bra{\colvec{2}{\mathbf{U}}{P},\colvec{2}{\mathbf{V}}{Q}} :=   \int_{\Omega_s}\mu^{-1}\,\mathbf{curl}\,\mathbf{U}\cdot\mathbf{curl}\,\mathbf{V}\dif\mathbf{x} + \int_{\Omega_s}\epsilon\,\nabla P\cdot\mathbf{V}\dif\mathbf{x}\\
  + \int_{\Omega_s}P\,Q\dif\mathbf{x}
  - \int_{\Omega_s}\epsilon\,\mathbf{U}\cdot\nabla Q \dif\mathbf{x} -\omega^2\int_{\Omega_s}\epsilon\,\mathbf{U}\cdot\mathbf{V}\dif\mathbf{x}
\end{multline}
related to the one supplied for the Hodge-Laplace operator in \cite[Sec. 3.2]{arnold2010finite}. We aim to couple \eqref{eq: sym bilinear form} with the BIEs replacing the PDEs in $\Omega'$. We use the transmission conditions \eqref{eq: strong from coupled sys c}-\eqref{eq: strong from coupled sys d} to couple \eqref{eq: green  mixed} to the variational equation
\begin{equation*}
  \mathfrak{B}_{\kappa}\bra{\colvec{2}{\mathbf{U}}{P},\colvec{2}{\mathbf{V}}{Q}} + \Big\langle \mathcal{T}^+_{N} (\mathbf{U}^{\text{ext}}),\colvec{2}{\gamma^{-}_t\mathbf{V}}{\gamma^{-} Q}\Big\rangle = \mathscr{G}\bra{\colvec{2}{\mathbf{V}}{Q}}, \label{eq: bilinear var. 1}
\end{equation*}
which involves a functional 
\begin{equation*}
  \mathscr{G}\bra{(\mathbf{V}\, Q)^{\top}} := (\mathbf{J},\mathbf{V})_{\Omega_s} - \langle (\mathbf{g}_R\,g_n)^{\top},(\gamma^{-}_t\mathbf{V}\,\gamma^{-} Q)^{\top}\rangle
\end{equation*}
bounded over the test space. The exterior Calder\'on projector can be used to express the so-called Dirichlet-to-Neumann operator in two different ways.

1. Introducing the jump conditions into the \emph{first exterior Calder\'on identity} given on the first line of \eqref{exterior calderon} along with a new unknown $\vec{\mathbf{p}}=\mathcal{T}^+_{N} (\mathbf{U}^{\text{ext}})$ yields a variational system
\begin{equation}
\begin{aligned}
    \mathfrak{B}_{\kappa}\bra{\colvec{2}{\mathbf{U}}{P},\colvec{2}{\mathbf{V}}{Q}} + \Big\langle \vec{\mathbf{p}},\colvec{2}{\gamma^{-}_t\mathbf{V}}{\gamma^{-} Q} \Big\rangle &= \mathscr{G}\bra{\colvec{2}{\mathbf{V}}{Q}},\\
    \Big\langle \bra{\{\mathcal{T_{D}}\}\cdot\mathcal{DL}_{\kappa}+\frac{1}{2}\id}\mathcal{T}_{D,\epsilon}^-(\mathbf{U}), \vec{\mathbf{a}}\Big\rangle + \Big\langle \{\mathcal{T}_{D}\}\cdot\mathcal{SL}_{\kappa}\bra{\vec{\mathbf{p}}},\vec{\mathbf{a}}\Big\rangle &= \mathscr{R}\bra{\vec{\mathbf{a}}},
    \end{aligned}
    \label{eq:Johnson-Nedelec}
\end{equation}
  for all $(\mathbf{V}\,Q)^{\top}\in \mathbf{H}\bra{\mathbf{curl}, \Omega_s}\times H^1(\Omega_s)$ and $\vec{\mathbf{a}}\in\mathcal{H}_N$, resembling the original Johnson-Ned\'elec coupling \cite{aurada2013classical}. The new functional appearing on the right hand side of \eqref{eq:Johnson-Nedelec} is defined as 
  \begin{equation}
    \mathscr{R}\bra{\vec{\mathbf{a}}}: = \langle \bra{\{\mathcal{T}_{D}\}\cdot\mathcal{DL}_{\kappa}+\frac{1}{2}\id}(\bm{\zeta}_t,\,\zeta_D)^{\top}, \vec{\mathbf{a}}\rangle. 
  \end{equation}

2. Following the exposition of Costabel in \cite{costabel1987symmetric}, we also retain the \emph{second exterior Calder\'on identity} ---in which we again introduce the jump conditions to eliminate the dependence on the exterior solution--- and insert the resulting equation in \eqref{eq:Johnson-Nedelec} to obtain the symmetrically coupled problem. Again, the right hand side of our system of equations has to be modified to include a new bounded linear functional 
  \begin{equation}\mathscr{F}(\vec{\mathbf{V}}):= \mathscr{G}(V) + \langle -\{\mathcal{T}_{N}\}\cdot\mathcal{DL}_{\kappa}(\bm{\zeta}_t,\zeta_D)^{\top},(\gamma^{-}_t\mathbf{V},\,\gamma^{-} Q)^{\top}\rangle.
  \end{equation}
 
 We arrive at the following variational problem.
\begin{greyFrameCoupledProblem}
  Find $\vec{\mathbf{U}}:=(\mathbf{U},\,\,P)^{\top}\in\mathbf{H}\bra{\mathbf{curl}, \Omega_s}\times H^1(\Omega_s)$ and $\vec{\mathbf{p}}\in \mathcal{H}_{N}$ such that
  \begin{align}
    \label{Calderon coupled problem}
    \begin{split}
      \mathfrak{B}_{\kappa}\bra{\vec{\mathbf{U}},\vec{\mathbf{V}}} + \Big\langle \bra{-\mathbb{A}_{\kappa}^{NN}+\frac{1}{2}\id}\vec{\mathbf{p}},\colvec{2}{\gamma^{-}_t\mathbf{V}}{\gamma^{-} Q} \Big\rangle \qquad {}\qquad {}\qquad {}&\\
      +\,\Big\langle-\mathbb{A}_{\kappa}^{DN}\colvec{2}{\gamma_t^-\mathbf{U}}{-\gamma^-\bra{P}},\colvec{2}{\gamma^{-}_t\mathbf{V}}{\gamma^{-} Q}\Big\rangle &= \mathscr{F}\bra{\vec{\mathbf{V}}}\\
      \Big\langle \bra{\mathbb{A}^{DD}_{\kappa}+\frac{1}{2}\id}\colvec{2}{\gamma_t^-\mathbf{U}}{-\gamma^-\bra{P}}, \vec{\mathbf{a}}\Big\rangle + \Big\langle \mathbb{A}^{DD}_{\kappa}\bra{\vec{\mathbf{p}}},\vec{\mathbf{a}}\Big\rangle &= \mathscr{R}\bra{\vec{\mathbf{a}}}, 
    \end{split}
  \end{align}
  for all $\vec{\mathbf{V}}:=(\mathbf{V},\,\,Q)^{\top}\in\mathbf{H}\bra{\mathbf{curl}, \Omega_s}\times H^1(\Omega_s)$, $\vec{\mathbf{a}}\in \mathcal{H}_N$.
\end{greyFrameCoupledProblem}

\begin{remark} Part of the justification for using mixed formulations for problems involving the Hodge--Helmholtz/Laplace operator is the need to avoid trial spaces contained in $\mathbf{H}(\mathbf{curl},\Omega_s)\cap\mathbf{H}(\text{div},\Omega_s)$, because the latter doesn't allow for viable discretizations using finite elements \cite{arnold2010finite}. While from \eqref{eq:Johnson-Nedelec} the issue seems to reappear after using the Cald\'eron identities, the benefits of the introduced new unknown $P\in H^1(\Omega_s)$ in the mixed formulation conveniently carries over to the coupled system \eqref{Calderon coupled problem} upon substituting $-\gamma^-\bra{P}$ in place of $\gamma_{D,\epsilon}(\mathbf{U})$ in $\mathcal{T}^-_{D,\epsilon}(\mathbf{U})$.
\end{remark}

In the following proposition, we call \emph{forbidden resonant frequencies} the interior ``Dirichlet" (or electric) eigenvalues of the scaled Hodge-Laplace operator with constant coefficient $\eta = \mu_0\epsilon_0^2$. That is, $\kappa^2$ is a forbidden frequency if there exists a \emph{non-trivial} solution $\mathbf{U}\neq 0$ in $\mathbf{X}(\Delta,\Omega)$ to
\begin{align*}
    \Delta_{\eta}\mathbf{U}-\kappa^2\mathbf{U}&= 0,&&\text{in } \Omega_s,\\
    \mathcal{T}^{-}_D\mathbf{U}&=0, &&\text{on } \Gamma.
\end{align*}
We refer the reader to \cite{claeys2017first}, where the spectrum of the scaled Hodge-Laplace operator is completely characterized. See for e.g. \cite{schulz2020b}, \cite{sauter2010boundary}, \cite{christiansen2004discrete}, \cite{demkowicz1994asymptotic} and \cite{colton2013integral} for an overview of the issue of spurious resonances in electromagnetic and acoustic scattering models based on integral equations.
\begin{proposition}\label{prop: variational system solves transmission system}
  Suppose that $\kappa^2\in\mathbb{C}$ avoids forbidden resonant frequencies. By retaining an interior solution $U\in\mathbf{H}\bra{\mathbf{curl}, \Omega_s}$ and producing $\mathbf{U}^{\text{ext}}\in\mathbf{X}_{\text{\emph{loc}}}(\Delta,\Omega')$ using the representation formula \eqref{representation formula} for the obtained Cauchy data $(\vec{\mathbf{p}},\mathcal{T}_{D,\epsilon}^-U - (\bm{\zeta}_t,\,\,\zeta_D)^{\top})$ with $\gamma^-_{D,\epsilon}(\mathbf{U}) = -\gamma^-\bra{P}$, a solution to \eqref{Calderon coupled problem} solves the transmission system \eqref{eq: strong from coupled sys a}-\eqref{eq: strong from coupled sys d} in the sense of distribution.
\end{proposition}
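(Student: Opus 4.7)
The plan is to recover the four components of the transmission system from \eqref{Calderon coupled problem}: the interior volume equation by localising the first variational equation, the exterior volume equation and radiation conditions from the construction of $\mathbf{U}^{\text{ext}}$, and the two pairs of transmission conditions by matching the two variational equations with the Calder\'on identities via the jump relations.

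First I would test the first equation of \eqref{Calderon coupled problem} against $\mathbf{V}\in\mathscr{D}(\Omega_s)^3$ and $Q\in C_0^\infty(\Omega_s)$: this kills every boundary pairing and leaves $\mathfrak{B}_\kappa(\vec{\mathbf{U}},\vec{\mathbf{V}})=(\mathbf{J},\mathbf{V})_{\Omega_s}$. Taking $\mathbf{V}=0$ and varying $Q$ identifies $P=-\text{div}(\epsilon\mathbf{U})$ in $\mathscr{D}'(\Omega_s)$; taking $Q=0$ and integrating by parts distributionally recovers \eqref{eq: strong from coupled sys a}. Equation \eqref{eq: strong from coupled sys b} and the radiation conditions are then immediate from the construction of $\mathbf{U}^{\text{ext}}$: Lemmas~\ref{lem: single layers into delta} and~\ref{lem: double layers into delta} give $\mathbf{U}^{\text{ext}}\in\mathbf{X}_{\text{loc}}(\Delta,\Omega')$, equations \eqref{scalar helmholtz for scalar single layer}--\eqref{vector helmholtz for vector single layer}, \eqref{eq: Hodge-Helmholtz Upsilon} and \eqref{eq: curl potentials identity} give $-\Delta_\eta\mathbf{U}^{\text{ext}}=\kappa^2\mathbf{U}^{\text{ext}}$ in $\Omega'$ by linearity, and the outgoing decay of $G_\kappa$ supplies the radiation conditions of \cite{hazard1996solution}.

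For the \emph{electric} transmission condition \eqref{eq: strong from coupled sys d}, the second equation of \eqref{Calderon coupled problem} combined with the duality of $\mathcal{H}_N$ and $\mathcal{H}_D$ forces
\[
\bigl(\{\mathcal{T}_D\}\cdot\mathcal{DL}_\kappa + \tfrac{1}{2}\id\bigr)\vec{\bm{\eta}} + \{\mathcal{T}_D\}\cdot\mathcal{SL}_\kappa(\vec{\mathbf{p}}) = 0 \quad \text{in }\mathcal{H}_D,
\]
where $\vec{\bm{\eta}}:=\mathcal{T}^-_{D,\epsilon}(\mathbf{U})-(\bm{\zeta}_t,\zeta_D)^{\top}$. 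Decomposing $\mathcal{T}^{\pm}_D = \{\mathcal{T}_D\}\pm\tfrac{1}{2}[\mathcal{T}_D]$ on the potential representation $\mathcal{SL}_\kappa[\vec{\mathbf{p}}]+\mathcal{DL}_\kappa[\vec{\bm{\eta}}]$ and invoking the jump relations \eqref{double layer jump}--\eqref{single layer jump} identifies this identity with the first row of the exterior Calder\'on identity for $\mathbf{U}^{\text{ext}}$, which after matching with $\vec{\bm{\eta}}$ yields \eqref{eq: strong from coupled sys d}.

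For the \emph{magnetic} transmission condition \eqref{eq: strong from coupled sys c}, I would use Green's formulae \eqref{IBP div} and \eqref{curl curl integral identity} together with the interior PDE already proved to recast $\mathfrak{B}_\kappa(\vec{\mathbf{U}},\vec{\mathbf{V}})=(\mathbf{J},\mathbf{V})_{\Omega_s}-\langle\mathcal{T}^-_{N,\mu}(\mathbf{U}),(\gamma^-_t\mathbf{V},\gamma^-Q)^{\top}\rangle$, substitute this into the first equation of \eqref{Calderon coupled problem}, and use the jump relations once more to identify the combination $(-\{\mathcal{T}_N\}\cdot\mathcal{SL}_\kappa+\tfrac{1}{2}\id)\vec{\mathbf{p}}-\{\mathcal{T}_N\}\cdot\mathcal{DL}_\kappa(\vec{\bm{\eta}})$ with the exterior $N$-trace of $\mathbf{U}^{\text{ext}}$; matching boundary functionals tested against arbitrary $(\gamma^-_t\mathbf{V},\gamma^-Q)^{\top}$ then delivers \eqref{eq: strong from coupled sys c}. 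The main obstacle throughout is the meticulous bookkeeping of jump/average splittings with the paper's sign convention $[\cdot]=\gamma^--\gamma^+$ when collapsing Calder\'on operators onto one-sided traces of the potential representation; the exclusion of forbidden resonant frequencies is what ensures that these manipulations uniquely identify the exterior Cauchy data of $\mathbf{U}^{\text{ext}}$ with $(\vec{\bm{\eta}},\vec{\mathbf{p}})$ and close the transmission matchings without ambiguity.
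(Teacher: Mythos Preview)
Your approach is correct but takes a different, more direct route than the paper. The paper first reduces both variational equations (after integrating by parts using the interior PDE) to the single operator identity $\mathbb{P}^-_\kappa(\vec{\mathbf{p}},\vec{\bm{\xi}})^\top=(\vec{\mathbf{p}}-\vec{\mathbf{q}},0)^\top$ with $\vec{\mathbf{q}}:=\mathcal{T}^-_{N,\mu}\mathbf{U}-(\mathbf{g}_R,g_n)^\top$, then invokes Lemma~\ref{prop: valid cauchy data} twice: once to see that the right-hand side, lying in the range of $\mathbb{P}^-_\kappa$, is interior Cauchy data with vanishing Dirichlet component --- so the forbidden-frequency hypothesis forces $\vec{\mathbf{p}}=\vec{\mathbf{q}}$ --- and once more to conclude that $(\vec{\mathbf{p}},\vec{\bm{\xi}})\in\ker\mathbb{P}^-_\kappa$ is valid exterior Cauchy data, whence the representation formula reproduces both exterior traces. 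You instead compute $\mathcal{T}_D^+\mathbf{U}^{\mathrm{ext}}$ and $\mathcal{T}_N^+\mathbf{U}^{\mathrm{ext}}$ directly from the jump relations applied to the potential ansatz and match them against the two variational identities, obtaining $\mathcal{T}_N^+\mathbf{U}^{\mathrm{ext}}=\vec{\mathbf{q}}$ and $\mathcal{T}_D^+\mathbf{U}^{\mathrm{ext}}=\vec{\bm{\eta}}$ (up to the sign convention in defining $\mathbf{U}^{\mathrm{ext}}$) without the detour through Lemma~\ref{prop: valid cauchy data}. A side effect is that your argument does \emph{not} actually consume the forbidden-frequency hypothesis: your closing sentence misattributes its role, since you never need $\vec{\mathbf{p}}=\mathcal{T}_N^+\mathbf{U}^{\mathrm{ext}}$, only $\vec{\mathbf{q}}=\mathcal{T}_N^+\mathbf{U}^{\mathrm{ext}}$ (cf.\ the Remark after Corollary~\ref{cor: existence and uniqueness}, which confirms the transmission system is solved even at resonances). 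One small muddle: the phrase ``identifies this identity with the first row of the exterior Calder\'on identity'' in your electric step is not the right mechanism --- the exterior Calder\'on identity is tautologically satisfied by any exterior solution and does not by itself pin down its traces; the operative step is the direct jump evaluation $\mathcal{T}_D^+\bigl(\mathcal{SL}_\kappa\vec{\mathbf{p}}+\mathcal{DL}_\kappa\vec{\bm{\eta}}\bigr)=\{\mathcal{T}_D\}\mathcal{SL}_\kappa\vec{\mathbf{p}}+\{\mathcal{T}_D\}\mathcal{DL}_\kappa\vec{\bm{\eta}}-\tfrac{1}{2}\vec{\bm{\eta}}$, which the second variational equation collapses, exactly as you do cleanly on the magnetic side.
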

\begin{proof}
\par The proof follows the approach in \cite[Lem. 6.1]{hiptmair2003coupling}. Since $\mathscr{D}(\Omega_s)^3\times C^{\infty}_0(\Omega_s)$ is a subset of the volume test space, any solution to the problem \eqref{Calderon coupled problem} solves \eqref{eq: strong from coupled sys a} in $\Omega_s$ in the sense of distribution. It follows that \eqref{eq: green  mixed} holds for all admissible $\vec{\mathbf{V}}$, which reduces \eqref{Calderon coupled problem} to the variational system
\begin{align*}
    0&=\Big\langle \bra{\mathbb{A}^{DD}_{\kappa}+\frac{1}{2}\id}\vec{\bm{\xi}}, \vec{\bm{\eta}}\Big\rangle + \Big\langle \{\mathbb{A}^{ND}_{\kappa}\bra{\vec{\mathbf{p}}},\vec{\bm{\eta}}\Big\rangle  \\
    0 &=-\Big\langle \vec{\mathbf{q}},\colvec{2}{\gamma^{-}_t\mathbf{V}}{\gamma^{-} Q}\Big\rangle + \Big\langle \bra{-\mathbb{A}^{NN}_{\kappa}+\frac{1}{2}\id}\vec{\mathbf{p}},\colvec{2}{\gamma^{-}_t\mathbf{V}}{\gamma^{-} Q} \Big\rangle\\
    &\qquad\qquad
      -\, \Big\langle    \mathbb{A}^{DN}_{\kappa}(\vec{\bm{\xi}}),\colvec{2}{\gamma^{-}_t\mathbf{V}}{\gamma^{-} Q}\Big\rangle
\end{align*}
where $\vec{\mathbf{q}} := \mathcal{T}_{N,\mu}^-(\mathbf{U}) - (\mathbf{g}_R,\,\,g_n)^{\top}$ and $\vec{\bm{\xi}}:=\mathcal{T}^-_{D,\epsilon}(\mathbf{U})-(\bm{\zeta}_t,\,\,\zeta_D)^{\top}$. 

We recognize in the equivalent operator equation
\begin{equation}
\underbrace{\begin{pmatrix}
    \mathbb{A}^{NN}_{\kappa}+\frac{1}{2}\id & \mathbb{A}^{DN}_{\kappa}\\
    \mathbb{A}^{ND}_{\kappa} & \mathbb{A}^{DD}_{\kappa}+\frac{1}{2}\id
\end{pmatrix}}_{\mathbb{P}^-_{\kappa}}
\begin{pmatrix}
\vec{\mathbf{p}}\\
\vec{\bm{\xi}}
\end{pmatrix}
= \begin{pmatrix}
\vec{\mathbf{p}}-\vec{\mathbf{q}}\\
0
\end{pmatrix}
\label{op eq calderon}
\end{equation}
the interior Cald\'eron projector \eqref{interior calderon} whose image is the space of valid Cauchy data for the homogeneous (scaled) Hodge--Laplace/Helmholtz interior equation with constant coefficient $\eta$. In particular, $\vec{\mathbf{p}}-\vec{\mathbf{q}}=\mathcal{T}^-_N\bra{\tilde{\mathbf{U}}}$ for some vector-field $\tilde{\mathbf{U}}\in \mathbf{X}\bra{\Delta,\Omega_s}$ satisfying
\begin{align}\label{eq: hodge-laplace homogeneous dirichlet BVP}
\begin{split}
\Delta_{\eta} \tilde{\mathbf{U}} - \kappa^2 \tilde{\mathbf{U}}&= 0,  \qquad\qquad\qquad\text{in }\Omega_s\\
\mathcal{T}^-_D\bra{\tilde{\mathbf{U}}} &= 0, \qquad\qquad\qquad \text{on }\Gamma.
\end{split}
\end{align}

If $\kappa^2\neq 0$, we rely on the hypothesis that $\kappa^2$ doesn't belong to the set of forbidden resonant frequencies to guarantee injectivity of the above boundary value problem \cite[Sec. 3]{claeys2017first}\cite[Sec. 3]{hazard1996solution}. Otherwise, the second Betti number of $\Omega_s$ being zero implies that zero is not a Dirichlet eigenvalue \cite[Sec. 4.5.3]{arnold2018}. We conclude that $\tilde{\mathbf{U}}=0$ is the unique trivial solution to \eqref{eq: hodge-laplace homogeneous dirichlet BVP}. Therefore, for the right hand side of \eqref{op eq calderon} to exhibit valid Neumann data, it must be that $\vec{\mathbf{p}}=\vec{\mathbf{q}}$. 

Now, the null space of the interior Cald\'eron projector $\mathbb{P}^-_{\kappa}$ coincides with valid Cauchy data for the exterior boundary value problem \eqref{eq: strong from coupled sys b} complemented with the radiation conditions at infinity introduced in Section \ref{sec: Introduction}.
In particular $(\vec{\mathbf{p}},\,\,\vec{\bm{\xi}})^{\top}$ is valid Cauchy data for that exterior Hodge--Helmholtz or Hodge--Laplace problem and $\mathbf{U}^{\text{ext}}=\mathcal{SL}_{\kappa}\bra{\vec{\mathbf{p}}} + \mathcal{DL}_{\kappa}\bra{\vec{\bm{\xi}}}$ solves \eqref{eq: strong from coupled sys b} and \eqref{eq: strong from coupled sys d} by construction. The fact that $\vec{\mathbf{p}}=\mathcal{T}^+_{N}\bra{\mathbf{U}^{\text{ext}}}$ solves \eqref{eq: strong from coupled sys c} is confirmed by the earlier observation that $\vec{\mathbf{p}}=\vec{\mathbf{q}}$.
\end{proof}

\begin{corollary}\label{cor: existence and uniqueness}
  Suppose that $\kappa^2\in\mathbb{C}$ avoids forbidden resonant frequencies. A solution pair $\bra{\vec{\mathbf{U}},\,\vec{\mathbf{p}}}$ to the coupled problem \eqref{Calderon coupled problem} is unique.
\end{corollary}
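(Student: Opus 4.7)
The plan is to exploit linearity of the coupled variational system and Proposition \ref{prop: variational system solves transmission system} to reduce uniqueness of \eqref{Calderon coupled problem} to uniqueness of the transmission problem \eqref{eq: transmission problem}. First, given two solutions $(\vec{\mathbf{U}}_1, \vec{\mathbf{p}}_1)$ and $(\vec{\mathbf{U}}_2, \vec{\mathbf{p}}_2)$, I would consider their difference $(\vec{\mathbf{U}}, \vec{\mathbf{p}}) := (\vec{\mathbf{U}}_1 - \vec{\mathbf{U}}_2, \vec{\mathbf{p}}_1 - \vec{\mathbf{p}}_2)$. Since the sesquilinear form $\mathfrak{B}_\kappa$, all blocks of the Cald\'eron operator $\mathbb{A}_\kappa$, and the compounded traces $\mathcal{T}^{\pm}_{D/N}$ are linear, this difference solves \eqref{Calderon coupled problem} with vanishing right-hand side, which corresponds to the homogeneous data $\mathbf{J} = 0$, $\mathbf{g}_R = 0$, $g_n = 0$, $\bm{\zeta}_t = 0$, $\zeta_D = 0$.

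Next, I would apply Proposition \ref{prop: variational system solves transmission system} to this homogeneous configuration. That result reconstructs an exterior field $\mathbf{U}^{\text{ext}} \in \mathbf{X}_{\text{loc}}(\Delta, \Omega')$ via the representation formula \eqref{representation formula} and certifies that the pair $(\mathbf{U}, \mathbf{U}^{\text{ext}})$ solves the transmission system \eqref{eq: strong from coupled sys a}--\eqref{eq: strong from coupled sys d} with vanishing source and transmission data in the sense of distributions, together with the radiation conditions imposed at infinity as in \cite{hazard1996solution}.

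To conclude, I would invoke uniqueness of the Hodge--Helmholtz/Laplace transmission problem away from forbidden resonant frequencies, which is available in \cite{hazard1996solution}. This yields $\mathbf{U} = 0$ in $\Omega_s$ and $\mathbf{U}^{\text{ext}} = 0$ in $\Omega'$. Taking the exterior compounded Neumann trace then gives $\vec{\mathbf{p}} = \mathcal{T}^+_N(\mathbf{U}^{\text{ext}}) = 0$, while the auxiliary variable vanishes automatically since, by construction of the mixed formulation, $P = -\text{div}(\epsilon \mathbf{U}) = 0$ in the distributional sense. Hence $(\vec{\mathbf{U}}_1, \vec{\mathbf{p}}_1) = (\vec{\mathbf{U}}_2, \vec{\mathbf{p}}_2)$.

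The main obstacle is to legitimately invoke transmission uniqueness in the functional setting at hand. Exterior uniqueness is controlled through Rellich-type arguments combined with the Silver--M\"uller-style radiation conditions, but the interior component demands that $\kappa^2$ avoid the Dirichlet spectrum of the scaled Hodge-Laplace operator $\Delta_\eta$. This is precisely the forbidden-resonant-frequencies hypothesis, and it enters here in exactly the same manner as in the proof of Proposition \ref{prop: variational system solves transmission system}, where it was used to rule out the nontrivial solution $\tilde{\mathbf{U}}$ of the homogeneous interior Dirichlet BVP \eqref{eq: hodge-laplace homogeneous dirichlet BVP}.
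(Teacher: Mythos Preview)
Your argument is correct and matches the paper's implicit reasoning: the corollary is stated without proof precisely because it follows from Proposition~\ref{prop: variational system solves transmission system} together with uniqueness of the homogeneous transmission problem under the radiation/decay conditions of \cite{hazard1996solution}.

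One clarification about your final paragraph, however: the forbidden-frequencies hypothesis does \emph{not} enter through uniqueness of the transmission problem. Uniqueness of \eqref{eq: transmission problem} (variable coefficients in $\Omega_s$, constant coefficients in $\Omega'$, radiation conditions at infinity) holds regardless of whether $\kappa^2$ is a forbidden frequency --- this is exactly what the remark following the corollary asserts, namely that the interior field $\mathbf{U}$ remains unique even at resonant frequencies. The forbidden-frequencies hypothesis enters \emph{only} through Proposition~\ref{prop: variational system solves transmission system}: it is what guarantees $\vec{\mathbf{p}} = \vec{\mathbf{q}} = \mathcal{T}^+_N(\mathbf{U}^{\text{ext}})$, so that the auxiliary boundary unknown is genuinely the exterior Neumann trace and not contaminated by a spurious Neumann trace of an interior Dirichlet eigenfunction of the \emph{constant-coefficient} operator $\Delta_\eta$ on $\Omega_s$. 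At a resonant frequency your steps would still give $\mathbf{U}=0$ and $\mathbf{U}^{\text{ext}}=0$, but $\vec{\mathbf{p}}$ could be any element of $\mathcal{T}^-_N\bigl(\ker(\Delta_\eta-\kappa^2)\cap\ker\mathcal{T}^-_D\bigr)$, and the argument would fail precisely at the step ``$\vec{\mathbf{p}}=\mathcal{T}^+_N(\mathbf{U}^{\text{ext}})$''.
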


\begin{remark}
  We show in \cite{schulz2020b}, where the kernel of the coupled problem is completely characterized, that when $\kappa^2$ happens to be a resonant frequency, the interior solution $\mathbf{U}$ remains unique. This is no longer true for $\vec{\mathbf{p}}$ however, which is in general only unique up to Neumann traces of interior Dirichlet eigenfunctions of $\Delta_{\eta}$ associated to the eigenvalue $\kappa^2$. Fortunately, this kernel vanishes under the exterior representation formula obtained from \eqref{representation formula}.
\end{remark}

\section{Space decompositions}\label{sec: space decomposition}
Using the classical Hodge decomposition, a general inf-sup condition for Hodge--Laplace problems posed on closed Hilbert complexes was derived in \cite{arnold2010finite}. However, as orthogonality won't be important, we rather opt for the enhanced regularity of the regular decomposition proposed in \cite{buffa2003galerkin} and \cite{claeys2017first}. There, a continuous projection $\mathsf{Z}:\mathbf{H}\bra{\mathbf{curl},\Omega_s}\rightarrow\mathbf{H}^1(\Omega_s)$ is defined such that $\ker\bra{\mathsf{Z}}=\ker\bra{\mathbf{curl}}\cap\mathbf{H}\bra{\mathbf{curl},\Omega_s}$ and  $\mathbf{curl}\bra{\mathsf{Z}(\mathbf{U})}=\mathbf{curl}\bra{\mathbf{U}}$. 
From Rellich's theorem, this operator is compact as a mapping $\mathsf{Z}:\mathbf{H}\bra{\mathbf{curl},\Omega_s}\rightarrow \mathbf{L}^2(\Omega_s)$. Therefore, a stable direct regular decomposition 
\begin{equation}
  \mathbf{H}\bra{\mathbf{curl},\Omega_s} = \mathbf{X}(\mathbf{curl},\Omega_s) \oplus \mathbf{N}\bra{\mathbf{curl},\Omega_s}.
\end{equation} 
is provided by defining the subspaces $\mathbf{X}(\mathbf{curl},\Omega_s):=\mathsf{Z}\bra{\mathbf{H}\bra{\mathbf{curl},\Omega_s}}$ and $\mathbf{N}\bra{\mathbf{curl},\Omega_s}:=\ker\bra{\mathbf{curl}}\cap\mathbf{H}\bra{\mathbf{curl},\Omega_s}$.

A decomposition with similar properties can be designed for the space $\mathbf{H}^{-1/2}\bra{\text{div}_{\Gamma},\Gamma}$ with a projection operator $\mathsf{Z}^{\Gamma}:\mathbf{H}^{-1/2}\bra{\text{div}_{\Gamma},\Gamma}\rightarrow\mathbf{H}^{1/2}_R(\Gamma)$ satisfying $\ker(\mathsf{Z}^{\Gamma})=\ker\bra{\text{div}_{\Gamma}}\cap\mathbf{H}^{-1/2}\bra{\text{div}_{\Gamma},\Gamma}$ and $\text{div}_{\Gamma}\bra{\mathsf{Z}^{\Gamma}(\mathbf{p})}=\text{div}_{\Gamma}\bra{\mathbf{p}}$.

As before, the extra regularity of the range, in this case provided by \cite[Lem. 3.2]{hiptmair2003coupling}, leads to compactness of the mapping
$\mathsf{Z}^{\Gamma}:\mathbf{H}^{-1/2}\bra{\text{\text{div}}_{\Gamma},\Gamma}\rightarrow\mathbf{H}^{-1/2}_R(\Gamma)$.

The subspaces $\mathbf{X}\bra{\text{div}_{\Gamma},\Gamma}:=\mathsf{Z}^{\Gamma}\bra{\mathbf{H}^{-1/2}\bra{\text{div}_{\Gamma},\Gamma}}$ and $\mathbf{N}\bra{\text{div}_{\Gamma},\Gamma}:=\ker\bra{\text{div}_{\Gamma}}\cap\mathbf{H}^{-1/2}\bra{\text{div}_{\Gamma},\Gamma}$ provide a stable direct regular decomposition
\begin{equation}
  \mathbf{H}^{-1/2}\bra{\text{div}_{\Gamma},\Gamma} = \mathbf{X}\bra{\text{div}_{\Gamma},\Gamma}\oplus \mathbf{N}\bra{\text{div}_{\Gamma},\Gamma}.
\end{equation}

In the following, we may simplify notation by using $\mathbf{U}^{\perp}:=\mathsf{Z}\mathbf{U}$, $\mathbf{p}^{\perp}:=\mathsf{Z}^{\Gamma}\mathbf{p}$, $\mathbf{U}^0:=\bra{\id-\mathsf{Z}}\mathbf{U}$ and
$\mathbf{p}^0:=\bra{\id-\mathsf{Z}^{\Gamma}}\mathbf{p}$.

A very useful property of this pair of decompositions is stated is shown in \cite[Lem. 8.1]{hiptmair2003coupling} and \cite[Lem. 8.2]{hiptmair2003coupling}:
  The operators
  \begin{subequations}
    \begin{equation}\label{eq: old lem 4.6 1}
      \bra{\gamma_t^{-}}'\circ\bra{\{\gamma_R\}\bm{\Psi}_{\kappa}+\frac{1}{2}\id}:\mathbf{N}\bra{\text{\text{div}}_{\Gamma},\Gamma}\rightarrow \mathbf{N}\bra{\mathbf{curl},\Omega_s}',
    \end{equation}
    and
    \begin{equation}\label{eq: old lem 4.6 2}
      \bra{\gamma_t^{-}}'\circ\bra{\{\gamma_R\}\bm{\Psi}_{\kappa}+\frac{1}{2}\id}:\mathbf{X}\bra{\text{\text{div}}_{\Gamma},\Gamma}\rightarrow \mathbf{X}\bra{\mathbf{curl},\Omega_s}'
    \end{equation}	
  \end{subequations}
are compact.

Another benefit of this pair of regular decompositions will become explicit in the poof of
Lemma \ref{lem: coercivity TNDL} found in the next
section.

It follows from \cite[Lem. 6.4]{claeys2017first} that
$\text{div}_{\Gamma}:\mathbf{X}\bra{\text{div}_{\Gamma},\Gamma}\rightarrow
H^{-1/2}_*(\Gamma)$ is a continuous bijection. The bounded inverse theorem guarantees the
existence of a continuous inverse
$\bra{\text{div}_{\Gamma}}^{\dag}:H^{-1/2}_*(\Gamma)\rightarrow
\mathbf{X}\bra{\text{div}_{\Gamma},\Gamma}$ such that
\begin{align*}
  \bra{\text{div}_{\Gamma}}^{\dag}\circ\text{div}_{\Gamma} &=\id\Big\vert_{\mathbf{X}\bra{\text{div}_{\Gamma},\Gamma}}, & &\text{div}_{\Gamma}\circ\bra{\text{div}_{\Gamma}}^{\dag}=\id\Big\vert_{H^{-1/2}_*(\Gamma)}.
\end{align*} 

\section{Well-posedness of the coupled variational problem}\label{sec: compactness and coercivity}
We use the direct decompositions introduced in Section \ref{sec: space decomposition} to prove that the bilinear form associated to the coupled system \eqref{prop: variational system solves transmission system} of Section \ref{sec: coupled problem} satisfies a generalized G{\aa}rding inequality. 

The coupled variational problem \eqref{Calderon coupled problem} translates into the operator equation 
\begin{equation*}
  \mathbb{G}_{\kappa}
  \begin{pmatrix}
    \vec{\mathbf{U}}\\
    \vec{\mathbf{p}}
  \end{pmatrix} =
  \colvec{2}{\mathscr{F}}{\mathscr{R}}\in \bra{\mathbf{H}\bra{\mathbf{curl},\Omega_s}\times H^1(\Omega)}'\times \bra{\mathcal{H}_N}'.
  \quad
\end{equation*}

Letting $\mathsf{B}_{\kappa}:\mathbf{H}\bra{\mathbf{curl},\Omega_s}\times H^1\bra{\Omega_s}\rightarrow\bra{\mathbf{H}\bra{\mathbf{curl},\Omega_s}\times H^1\bra{\Omega_s}}'$ be the operator
\begin{equation*}\langle\mathsf{B}_{\kappa}\bra{\vec{\mathbf{U}}}\vec{\mathbf{V}}\rangle:=\mathfrak{B}_{\kappa}\bra{\vec{\mathbf{U}},\vec{\mathbf{V}}}
\end{equation*} 
associated with the Hodge--Helmholtz/Laplace volume contribution to the system, the operator 
\begin{equation*}
\mathbb{G}_{\kappa}:\bra{\mathbf{H}\bra{\mathbf{curl},\Omega_s}\times H^1(\Omega)}\times \mathcal{H}_N\rightarrow\bra{\mathbf{H}\bra{\mathbf{curl},\Omega_s}\times H^1(\Omega)}'\times \bra{\mathcal{H}_N}'
\end{equation*}
can be represented by the block operator matrix
  \begin{equation*}\label{eq: coupled equation operator matrix}
    \mathbb{G}_{\kappa}=\begin{pmatrix}
      \begin{array}{c|c}
	{\color{violet}\mathsf{B}_{\kappa}}{\color{purple}-\colvec{2}{\bra{\gamma_t^-}'}{\bra{\gamma^-}'}\cdot\mathbb{A}^{DN}_{\kappa}\cdot\colvec{2}{\gamma_t^-}{-\gamma^-}} & {\color{olive}\colvec{2}{\bra{\gamma_t^-}'}{\bra{\gamma^-}'}\cdot\bra{\mathbb{P}^+_{\kappa}}_{22}}\\
	\hline
	{\color{teal}\bra{\mathbb{P}_{\kappa}^-}_{11}\cdot\colvec{2}{\gamma_t^-}{-\gamma^-}} & {\color{magenta}\mathbb{A}_{\kappa}^{ND}}
      \end{array}
    \end{pmatrix},
  \end{equation*}
shown here in ``variational arrangement".

The symmetry revealed in \Cref{sec: Integral operators} makes explicit much of the structure of the above operator. We have introduced colors to better highlight the contribution of each individual block in the following sections.

Our goal is to design an isomorphism $\mathbb{X}$ of the test space and resort to compact perturbations of $\mathbb{G}_{\kappa}\circ\mathbb{X}^{-1}$ to achieve an operator block structure with diagonal blocks that are elliptic over the splittings of Section \ref{sec: space decomposition} and off-diagonal blocks that fit a skew-symmetric pattern. Stability of the coupled system can then be obtained from the next theorem. An overline indicates component-wise complex conjugation.
\begin{theorem}[{\cite[Thm. 4]{buffa2003galerkin}}]\label{thm: coercivity implies fredholm}
	If a bilinear form $a:V\times V\rightarrow \mathbb{C}$ on a reflexive Banach space $V$ is T-coercive:
	\begin{equation}\label{eq: t-coercivity def}
		\abs{a\bra{u,\mathbb{X}\overline{u}} + c\bra{u,\overline{u}}}\geq C\norm{u}^2_V\quad\forall u\in V,
	\end{equation}
with $C>0$, $c:V\times V\rightarrow\mathbb{C}$ compact and $\mathbb{X}:V\rightarrow V$ an isomorphism of $V$, then the operator $A:V\rightarrow V'$ defined by $A:u\mapsto a(u,\cdot)$ is Fredholm with index 0. 
\end{theorem}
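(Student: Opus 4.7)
\emph{Plan.} The idea is to realize the T-coercivity hypothesis as ordinary coercivity for the ``twisted'' sesquilinear form $\hat a(u,v):=a(u,\mathbb{X}v)$, deduce that the associated operator (plus the compact $C$) is an isomorphism, and then transfer Fredholmness back to $A$ using the compactness of $c$ and the bijectivity of $\mathbb{X}^{*}$. Introduce the bounded operators $A,C,\hat A: V\to V'$ defined by $\langle A u,v\rangle:=a(u,v)$, $\langle C u,v\rangle:=c(u,v)$, and $\langle \hat A u,v\rangle := a(u,\mathbb{X}v)$, so that $\hat A=\mathbb{X}^{*}A$. The hypothesis then reads
$$|\langle (\hat A+C)u,\bar u\rangle|\;\ge\;C\,\|u\|_{V}^{2}\qquad \forall u\in V.$$

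The heart of the argument is showing $\hat A+C$ is an isomorphism $V\to V'$ through four standard ingredients: (i) \emph{Injectivity} from testing $(\hat A+C)u=0$ against $\bar u$, which forces $C\|u\|_V^{2}\le 0$ and thus $u=0$; (ii) \emph{Bounded below} via Cauchy--Schwarz, $\|(\hat A+C)u\|_{V'}\ge C\,\|u\|_{V}$, which yields a closed range; (iii) \emph{Injectivity of the Banach adjoint} $(\hat A+C)^{*}:V''\to V'$: via reflexivity $V''\cong V$, and $(\hat A+C)^{*}$ is the operator of the flipped sesquilinear form $(u,v)\mapsto\overline{a(v,\mathbb{X}u)+c(v,u)}$, which satisfies the same lower bound on its diagonal and hence the argument of (i) applies; and (iv) \emph{Surjectivity}, where Hahn--Banach identifies the annihilator of $\mathrm{range}(\hat A+C)$ with $\ker(\hat A+C)^{*}=\{0\}$, yielding dense range that, combined with closedness from (ii), gives all of $V'$.

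Once $\hat A+C$ is shown bijective, write $\hat A=(\hat A+C)-C$: an isomorphism minus a compact operator is Fredholm of index $0$. Since $\mathbb{X}:V\to V$ is bijective, so is $\mathbb{X}^{*}:V'\to V'$, and the factorization $A=(\mathbb{X}^{*})^{-1}\hat A$ together with stability of the Fredholm index under composition with isomorphisms yields that $A$ itself is Fredholm of index $0$. The delicate point is the ``bounded below $+$ adjoint-injective $\Rightarrow$ surjective'' deduction in the non-Hilbert setting: reflexivity of $V$ is exactly what enables the Hahn--Banach duality between closed range and the annihilator of the adjoint's kernel, so once reflexivity is invoked the remainder is routine bookkeeping.
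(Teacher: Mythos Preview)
Your argument is correct and follows the standard route: show that $\hat A+C=\mathbb{X}^{*}A+C$ is an isomorphism via the Lax--Milgram-type reasoning (injective, bounded below, adjoint injective by reflexivity, hence surjective by the closed range theorem), then peel off the compact $C$ to get $\hat A$ Fredholm of index $0$, and finally strip off the isomorphism $\mathbb{X}^{*}$. One minor remark: when you identify $(\hat A+C)^{*}$ with the ``flipped'' form you write an extra complex conjugate; with the bilinear (not sesquilinear) conventions of the statement the adjoint simply corresponds to $(u,v)\mapsto a(v,\mathbb{X}u)+c(v,u)$, and the diagonal bound for it follows by substituting $\bar u$ for $u$ in the hypothesis. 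This is cosmetic and does not affect the estimate since $|\overline{z}|=|z|$.

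As for comparison with the paper: the paper does not actually prove this theorem. It is quoted from \cite[Thm.~4]{buffa2003galerkin}, and the surrounding text only points to \cite[Thm.~3.15]{steinbach2007numerical} for the case $\mathbb{X}=\id$ and to \cite{hildebrandt1964constructive} for the general case. Your write-up therefore supplies exactly the details the paper defers to the literature, and the approach you take is the one underlying those references.
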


The authors of \cite{Buffa2002Boundary} refer to \eqref{eq: t-coercivity def} as ``Generalized G{\aa}rding inequality", because
\begin{equation*}
\abs{a\bra{u,\mathbb{X}\overline{u}}}\geq C\norm{u}^2_V -\abs{c\bra{u,\overline{u}}}\qquad \forall\,u\in V,
\end{equation*}
generalizes the classical G{\aa}rding inequality for a bilinear form $b$ associated with uniformly elliptic operator of even order $2\ell$: $\exists\, C_2\geq 0, C_1>0$ such that
\begin{equation*}
b(u,u) \geq C_1\norm{u}^2_{H^\ell(\Omega)}- C_2\norm{u}_{L^2\bra{\Omega}}\qquad \forall\,u\in H^\ell_0(\Omega).
\end{equation*}
Assuming that \eqref{eq: t-coercivity def} holds with $\mathbb{X}=\id$, a simple proof of the stability estimate
$
\norm{u}_V \leq C\norm{f}_{V'}
$,
obtained for the unique solution of the operator equation $Au=f$ when $A$ is injective is given in \cite[Thm. 3.15]{steinbach2007numerical}. A proof of the general case can be deduced from \cite{hildebrandt1964constructive}. T-coercivity theory is a reformulation of the Banach-Ne{\u c}as-Babu{\u s}ka theory. The former relies on the construction of explicit inf-sup operators at the discrete and continuous levels, whereas the later develops on an abstract inf-sup condition \cite{ciarlet2012t}.

In deriving the following results, it will be convenient to denote $\vec{\mathbf{U}} := (\mathbf{U},\,\,P)^{\top}\in \mathbf{H}\bra{\mathbf{curl},\Omega_s}\times H^1(\Omega)$ and $\vec{\mathbf{p}} := (\mathbf{p},\,\,q)^{\top}\in\mathcal{H}_N$. We indicate with a hat equality up to a compact perturbation (e.g. $\hat{=}$).

\subsection{Space isomorphisms}
In this section, we take up the challenge of finding a suitable isomorphism $\mathbb{X}$. We build it separately for the function spaces in $\Omega_s$ and on the boundary $\Gamma$. Crucial hints are offered by the construction of the sign-flip isomorphism for the classical electric wave equation in \cite{buffa2003galerkin}.

We start with devising an isomorphism $\Xi$ of the volume function spaces and show that the upper-left diagonal block of $\mathbb{G}_{\kappa}$ satisfy a generalized G{\aa}rding inequality.

Under the assumption that the first Betti number of $\Omega_s$ is zero, there exists a bijective ``scalar potential lifting" 
$
\mathsf{S}:\mathbf{N}(\mathbf{curl},\Omega_s)\rightarrow H^1_*\bra{\Omega_s}
$
satisfying $\nabla \mathsf{S}\bra{\mathbf{U}}=\mathbf{U}$. The Poincar\'e-Friedrichs inequality guarantees that this map is continuous.

Notice that since it also follows from the Poincar\'e-Friedrichs inequality that $\nabla:H_*^1(\Omega_s)\rightarrow\mathbf{N}(\mathbf{curl},\Omega_s)$ is injective, $\mathsf{S}\circ\nabla:H^1(\Omega_s)\rightarrow H_*^1(\Omega_s)$ is a bounded projection onto the space of Lebesgue measurable functions having zero mean. Its nullspace consists of the constant functions in $\Omega_s$.

\begin{proposition}\label{prop: isomorphism volume space}
  For any $\theta>0$ and $\beta>0$, the bounded linear operator $\Xi:\mathbf{H}\bra{\mathbf{curl}, \Omega_s}\times H^1(\Omega_s)\rightarrow\mathbf{H}\bra{\mathbf{curl}, \Omega_s}\times H^1(\Omega_s)$ defined by
  \begin{equation*}
    \Xi\bra{\vec{\mathbf{U}}}:=\colvec{2}{\mathbf{U}^{\perp}-\mathbf{U}^0+\beta\,\nabla P}{-\theta\bra{\mathsf{S}\bra{\mathbf{U}^0}+\,\beta\,\mathbf{mean}\bra{P}}},\qquad\qquad \vec{\mathbf{U}} = (\mathbf{U},\,\,P)^{\top},
  \end{equation*}
  has a continuous inverse. In other words, $\Xi$ is an isomorphism of Banach spaces.
\end{proposition}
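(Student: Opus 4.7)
The plan is to prove that $\Xi$ is continuously invertible by writing it as a block-triangular operator with respect to the direct sum decompositions
\[
\mathbf{H}(\mathbf{curl},\Omega_s) = \mathbf{X}(\mathbf{curl},\Omega_s)\oplus\mathbf{N}(\mathbf{curl},\Omega_s), \qquad H^1(\Omega_s) = H^1_*(\Omega_s)\oplus\mathbb{R},
\]
and constructing the inverse explicitly from this block form.

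Continuity of $\Xi$ is immediate from the ingredients already in the paper: the projector $\mathsf{Z}$ is bounded by Lemma~\ref{lem: proj volume}; $\nabla:H^1(\Omega_s)\to\mathbf{H}(\mathbf{curl},\Omega_s)$ is bounded with range in $\mathbf{N}(\mathbf{curl},\Omega_s)$; the scalar potential lifting $\mathsf{S}$ is bounded by the Poincar\'e--Friedrichs inequality and satisfies $\mathsf{S}^{-1}=\nabla\vert_{H^1_*(\Omega_s)}$; and the mean is a bounded linear functional on $H^1(\Omega_s)$.

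For bijectivity I decompose $\mathbf{U}=\mathbf{U}^\perp+\mathbf{U}^0$, $P=P_*+\bar P$ and similarly a target $(\mathbf{V},Q)=(\mathbf{V}^\perp+\mathbf{V}^0,\,Q_*+\bar Q)$. Noting that $\nabla P = \nabla P_*\in\mathbf{N}(\mathbf{curl},\Omega_s)$, the equation $\Xi(\vec{\mathbf{U}})=(\mathbf{V},Q)$ decouples according to the four summands $\mathbf{X}(\mathbf{curl})\oplus\mathbf{N}(\mathbf{curl})\oplus H^1_*(\Omega_s)\oplus\mathbb{R}$ into the $4\times 4$ block system
\[
\begin{pmatrix} \mathrm{id} & 0 & 0 & 0 \\ 0 & -\mathrm{id} & \beta\nabla & 0 \\ 0 & -\theta\mathsf{S} & 0 & 0 \\ 0 & 0 & 0 & -\theta\beta\,\mathrm{id} \end{pmatrix}
\begin{pmatrix}\mathbf{U}^\perp\\ \mathbf{U}^0\\ P_*\\ \bar P\end{pmatrix}
=\begin{pmatrix}\mathbf{V}^\perp\\ \mathbf{V}^0\\ Q_*\\ \bar Q\end{pmatrix}.
\]
The outer blocks are trivially invertible ($\mathbf{U}^\perp=\mathbf{V}^\perp$ and $\bar P=-\bar Q/(\theta\beta)$). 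The inner $2\times 2$ block coupling $(\mathbf{U}^0,P_*)$ to $(\mathbf{V}^0,Q_*)$ is inverted by first solving $-\theta\mathsf{S}(\mathbf{U}^0)=Q_*$ for $\mathbf{U}^0=-\mathsf{S}^{-1}(Q_*/\theta)=-\nabla(Q_*/\theta)\in\mathbf{N}(\mathbf{curl},\Omega_s)$, and then recovering $P_*=\mathsf{S}\bigl((\mathbf{V}^0+\mathbf{U}^0)/\beta\bigr)\in H^1_*(\Omega_s)$ from the second row, using $\mathsf{S}:\mathbf{N}(\mathbf{curl},\Omega_s)\to H^1_*(\Omega_s)$ as a bijection.

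Each step is bounded individually, hence the inverse is bounded and $\Xi$ is an isomorphism; alternatively, once bijectivity is established the open mapping theorem yields continuity of the inverse automatically. The proof involves no deep estimate; the only care required is the accounting that $\nabla P\in\mathbf{N}(\mathbf{curl},\Omega_s)$ so that the block structure is genuinely triangular up to the single $2\times 2$ mixing block, and that the trivial de Rham cohomology assumption guarantees the bijectivity of $\mathsf{S}$ used in the inversion.
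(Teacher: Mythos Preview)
Your proof is correct and follows essentially the same strategy as the paper: both exploit the direct sum decompositions $\mathbf{H}(\mathbf{curl},\Omega_s)=\mathbf{X}(\mathbf{curl},\Omega_s)\oplus\mathbf{N}(\mathbf{curl},\Omega_s)$ and $H^1(\Omega_s)=H^1_*(\Omega_s)\oplus\mathbb{R}$, use that $\nabla P\in\mathbf{N}(\mathbf{curl},\Omega_s)$ and that $\mathsf{S}$ is a bijection onto $H^1_*(\Omega_s)$, and conclude via the bounded inverse theorem. The only cosmetic difference is that the paper verifies surjectivity and injectivity separately (exhibiting the preimage $\bigl(\mathbf{V}^\perp-\theta^{-1}\nabla Q,\ \beta^{-1}(\mathsf{S}(\mathbf{V}^0)-\theta^{-1}Q)\bigr)$ by hand), whereas you package the same computation as the inversion of a $4\times 4$ block-triangular system; the resulting inverse is identical.
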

\begin{proof}
  By showing that $\Xi$ is a bijection, the theorem follows as a consequence of the bounded inverse theorem.
  
  Let $\bra{\mathbf{V}\,\,Q}^{\top}\in \mathbf{H}\bra{\mathbf{curl},  \Omega_s}\times H^1(\Omega_s)$. Since $\nabla Q\in\mathbf{N}\bra{\mathbf{curl},\Omega_s}$, we immediately have $\mathsf{Z}\bra{\mathbf{V}^{\perp}-\theta^{-1}\nabla Q}=\mathbf{V}^{\perp}$ and $\bra{\id-\mathsf{Z}}\bra{\mathbf{V}^{\perp}-\theta^{-1}\nabla Q}= -\theta^{-1}\nabla Q$. Hence, relying on the resulting observation that 
  \begin{equation*}
    \nabla \mathsf{S}\bra{\bra{\mathbf{V}^{\perp}-\theta^{-1}\nabla Q}^0} = -\theta^{-1}\nabla Q
  \end{equation*} and exploiting that $\mathbf{mean}\bra{H^1_*(\Omega_s)}=\{0\}$, we have
  \begin{align}\label{eq: inverse of Xi candidate}
    \Xi\bra{\colvec{2}{\mathbf{V}^{\perp}-\theta^{-1}\nabla Q}{\beta^{-1}\bra{\mathsf{S}\bra{\mathbf{V}^0}-\theta^{-1}Q}}} 
    &=\colvec{2}{\mathbf{V}}{\mathsf{S}\bra{\nabla Q}+\mathbf{mean}\bra{Q}}.
  \end{align} 
  Since $H^1(\Omega_s)$ decomposes into the stable direct sum of $H^1_*(\Omega_s)$ and the space of constant functions in $\Omega_s$, \eqref{eq: inverse of Xi candidate} shows that $\Xi$ is surjective.
  
  Now, suppose that $\Xi\bra{\vec{\mathbf{V}}}=\Xi\bra{\vec{\mathbf{U}}}$. Then, we have
  \begin{equation*}
    \mathbf{U}^0-\mathbf{V}^0 = \nabla \mathsf{S}\bra{\mathbf{U}^0-\mathbf{V}^0} = \beta\,\nabla\bra{\mathbf{mean}\bra{Q-P}} = 0.
  \end{equation*}
  Since the considerations of Section \ref{sec: space decomposition} readily yield that $\mathbf{V}^\perp = \mathbf{U}^\perp$, we conclude that $\mathbf{V}=\mathbf{U}$. In turn, it follows that $\nabla P= \nabla Q$ and $\mathbf{mean}(P)=\mathbf{mean}(Q)$. Therefore, $\Xi$ is injective.
\end{proof}

We now turn to the design of an isomorphism for the Neumann trace space $\mathcal{H}_N$ and prove that the lower-right block $\mathbb{A}_{\kappa}^{ND}$ of $\mathbb{G}_{\kappa}$ satisfies a generalized G{\aa}rding inequality. 
\begin{proposition}\label{prop: isomorphism trace space}
  For any $\tau > 0$ and $\lambda>0$, the bounded linear operator $\Xi^{\Gamma}:\mathcal{H}_N\rightarrow\mathcal{H}_N$ defined by
  \begin{equation*}
    \Xi^{\Gamma}(\vec{\mathbf{p}}) :=\colvec{2}{\mathbf{p}^{\perp}-\mathbf{p}^0 -\lambda\bra{\text{\emph{div}}_{\Gamma}}^{\dag}\mathsf{Q}_*q}{-\tau\bra{\text{\emph{div}}_{\Gamma}\bra{\mathbf{p}}+\lambda\,\mathbf{mean}\bra{q}}}, \qquad\qquad \vec{\mathbf{p}} = (\mathbf{p},\,\,q)^{\top},
  \end{equation*}
  has a continuous inverse. In other words, $\Xi^{\Gamma}$ is an isomorphism of Banach spaces.
\end{proposition}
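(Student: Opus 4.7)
The plan is to mirror the proof of the preceding proposition on the isomorphism of the volume function spaces: establish that $\Xi^{\Gamma}$ is a bounded bijection on $\mathcal{H}_N$ and then invoke the bounded inverse theorem. Boundedness is routine, since $\mathsf{Z}^{\Gamma}$ is continuous by Lemma on the trace projection, $(\text{div}_{\Gamma})^{\dag}$ is bounded by the open mapping theorem applied to the surjection $\text{div}_{\Gamma}:\mathbf{X}(\text{div}_{\Gamma},\Gamma)\to H^{-1/2}_{*}(\Gamma)$ supplied by Lemma on properties of surface operators, and $\mathsf{Q}_{*}$ together with $\mathbf{mean}(\cdot)$ are continuous on $H^{-1/2}(\Gamma)$, the latter being realized as the duality pairing against the constant function $1\in H^{1/2}(\Gamma)$.

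For surjectivity I would proceed by exhibiting an explicit preimage. Given a target $(\mathbf{V},\xi)\in\mathcal{H}_N$, I claim that the pair
$$\mathbf{p}\;=\;-\tfrac{1}{\tau}(\text{div}_{\Gamma})^{\dag}\mathsf{Q}_{*}\xi\;-\;\mathbf{V}^{0}, \qquad q\;=\;-\tfrac{1}{\lambda}\text{div}_{\Gamma}(\mathbf{V}^{\perp})\;-\;\tfrac{1}{\tau\lambda}\xi$$
satisfies $\Xi^{\Gamma}(\mathbf{p},q)=(\mathbf{V},\xi)$. The verification amounts to reading off the $\mathbf{X}/\mathbf{N}$ components of $\mathbf{p}$ (noting that $(\text{div}_{\Gamma})^{\dag}$ takes values in $\mathbf{X}(\text{div}_{\Gamma},\Gamma)$), computing $\text{div}_{\Gamma}(\mathbf{p})$ (which annihilates the kernel part $\mathbf{V}^{0}$), and separating $q$ into its mean-zero and constant pieces via $\mathsf{Q}_{*}$ and $\mathbf{mean}$; everything then collapses thanks to the identity $\text{div}_{\Gamma}\circ(\text{div}_{\Gamma})^{\dag}=\id$ on $H^{-1/2}_{*}(\Gamma)$.

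For injectivity, I would assume $\Xi^{\Gamma}(\mathbf{p},q)=0$ and peel off information component by component. Taking the mean of the second component equation yields $\mathbf{mean}(q)=0$, since $\text{div}_{\Gamma}(\mathbf{p})$ automatically lies in $H^{-1/2}_{*}(\Gamma)$. Whence $\text{div}_{\Gamma}(\mathbf{p})=0$, which forces $\mathbf{p}^{\perp}=0$ by the bijectivity of $\text{div}_{\Gamma}|_{\mathbf{X}(\text{div}_{\Gamma},\Gamma)}$. Splitting the first component equation along the stable direct sum $\mathbf{X}(\text{div}_{\Gamma},\Gamma)\oplus\mathbf{N}(\text{div}_{\Gamma},\Gamma)$ then yields $\mathbf{p}^{0}=0$ from the $\mathbf{N}$-part and $\mathsf{Q}_{*}q=0$ from the $\mathbf{X}$-part (again using injectivity of $(\text{div}_{\Gamma})^{\dag}$). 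Combining with $\mathbf{mean}(q)=0$ forces $q=0$, and $\mathbf{p}^{\perp}=\mathbf{p}^{0}=0$ gives $\mathbf{p}=0$.

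I do not expect a substantial obstacle. The only delicate point is bookkeeping: one must consistently track which terms of $\Xi^{\Gamma}(\mathbf{p},q)$ live in $\mathbf{X}(\text{div}_{\Gamma},\Gamma)$ versus $\mathbf{N}(\text{div}_{\Gamma},\Gamma)$, and which scalar pieces belong to $H^{-1/2}_{*}(\Gamma)$ versus the one-dimensional space of constants, and verify that the composition $(\text{div}_{\Gamma})^{\dag}\mathsf{Q}_{*}$ is only ever applied to inputs of vanishing mean, as is necessary for $(\text{div}_{\Gamma})^{\dag}$ to be well defined.
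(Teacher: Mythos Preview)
Your proposal is correct and follows essentially the same route as the paper: both establish bijectivity by exhibiting the same explicit preimage for surjectivity and by splitting along the direct sums $\mathbf{X}(\text{div}_{\Gamma},\Gamma)\oplus\mathbf{N}(\text{div}_{\Gamma},\Gamma)$ and $H^{-1/2}_{*}(\Gamma)\oplus\{\text{constants}\}$ for injectivity, then appeal to the bounded inverse theorem. The only cosmetic difference is that the paper argues injectivity from $\Xi^{\Gamma}(\vec{\mathbf{p}})=\Xi^{\Gamma}(\vec{\mathbf{a}})$ rather than from $\Xi^{\Gamma}(\vec{\mathbf{p}})=0$, which is of course equivalent by linearity.
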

\begin{proof}
    We proceed as in proposition \ref{prop: isomorphism volume space}. Since $\bra{\text{div}_{\Gamma}}^{\dag}\mathsf{Q}_*q\in\mathbf{X}(\text{div}_{\Gamma},\Gamma)$, we have $\mathsf{Z}^{\Gamma}\bra{\Xi_1^{\Gamma}(\vec{\mathbf{p}})}=\mathbf{p}^{\perp}-\bra{\text{div}_{\Gamma}}^{\dag}\mathsf{Q}_*q$. Using that $\mathbf{mean}\circ\text{div}_{\Gamma}=0$ and $\bra{\text{div}_{\Gamma}}^{\dag}\text{div}_{\Gamma}\mathbf{p}=\mathbf{p}^{\perp}$, we evaluate
	\begin{align*}
	\Xi^{\Gamma}\bra{\colvec{2}{-\mathbf{p}^0-\tau^{-1}\bra{\text{div}_{\Gamma}}^{\dag}\mathsf{Q}_*q}{\lambda^{-1}\bra{-\text{div}_{\Gamma}(\mathbf{p})-\tau^{-1}q}}} &=\colvec{2}{\mathbf{p}^0+\mathbf{p}^{\perp}}{\mathsf{Q}_*q+\mathbf{mean}\bra{q}}.
	\end{align*}
	This shows that $\Xi^{\Gamma}$ is surjective.
	
	Suppose that $X^{\Gamma}(\vec{\mathbf{p}})=X^{\Gamma}(\vec{\mathbf{a}})$. It is immediate that $\mathbf{p}^0=\mathbf{a}^0$. On the one hand, we obtain from $X_1^{\Gamma}(\vec{\mathbf{p}})=X_1^{\Gamma}(\vec{\mathbf{a}})$ that
	\begin{equation}\label{eq: injectivity Xi gamma proof obs 1}
	\mathbf{p}^{\perp}-\mathbf{a}^{\perp} = \lambda\bra{\text{div}_{\Gamma}}^{\dag}\bra{\mathsf{Q}_*q -\mathsf{Q}_*b}.
	\end{equation}
	On the other hand, $X_2^{\Gamma}(\vec{\mathbf{p}})=X_2^{\Gamma}(\vec{\mathbf{a}})$ implies that
	\begin{equation}\label{eq: injectivity Xi gamma proof obs 2}
	\text{div}_{\Gamma}\bra{\mathbf{p}-\mathbf{a}} = \lambda\,\mathbf{mean}\bra{q-b}.
	\end{equation}
	Relying on the fact that $\text{div}_{\Gamma} = \text{div}_{\Gamma}\circ\mathsf{Z}^{\Gamma}$ again, combining \eqref{eq: injectivity Xi gamma proof obs 1} and \eqref{eq: injectivity Xi gamma proof obs 2} yields
	\begin{equation*}
	\mathsf{Q}_*q + \mathbf{mean}(q) = \mathsf{Q}_*b + \mathbf{mean}(b).
	\end{equation*}
	Evidently, \eqref{eq: injectivity Xi gamma proof obs 1} then also guarantees that $\mathbf{p}^{\perp}=\mathbf{a}^{\perp}$. We can finally conclude that $X^{\Gamma}$ is injective and thus the result follows from the bounded inverse theorem.
\end{proof}	

In the following, we will write $\Xi^{\Gamma}_1$ and $\Xi^{\Gamma}_2$ for the components of the isomorphism of the trace space.	

\subsection{Main result}
\label{sec:main}

The main result of this work, stated in Theorem \ref{thm: main theorem}, asserts that the
operator $\mathbb{G}_{\kappa}$ associated with the coupled system \eqref{Calderon coupled
  problem} is well-posed when $\kappa^2$ lies outside the discrete set of forbidden
frequencies described in \cite{claeys2017first}. It relies on two propositions, whose
proofs are postponed until the end of section \ref{sec: compactness and coercivity}.

The first claims that the \emph{block diagonal of} $\mathbb{G}_{\kappa}$ (as a sum of block operators) \emph{is T-coercive}.
\begin{proposition}\label{prop: coercivity volume and TNDL}
  For any frequency $\omega\geq0$, there exist a compact operator $\mathsf{K}:\mathbf{H}\bra{\mathbf{curl}, \Omega_s}\times H^1(\Omega_s)\times\mathcal{H}_N\rightarrow\mathbf{H}\bra{\mathbf{curl}, \Omega_s}\times H^1(\Omega_s)\times\mathcal{H}_N$, a positive constant $C>0$ and parameters $\theta>0$ and $\tau>0$, possibly depending on $\Omega_s$, $\epsilon$, $\mu$, $\kappa$ and $\omega$, such that
  \begin{multline*}
    \mathfrak{Re}\,\Bigg\langle \diag\bra{\mathbb{G}_{\kappa}}\colvec{2}{\vec{\mathbf{U}}}{\vec{\mathbf{p}}},\colvec{2}{\Xi\,\vec{\overline{\mathbf{U}}}}{\Xi^{\Gamma}\vec{\overline{\mathbf{p}}}}\Big\rangle + \Big\langle\mathsf{K}\colvec{2}{\vec{\mathbf{U}}}{\vec{\mathbf{p}}},\colvec{2}{\vec{\overline{\mathbf{U}}}}{\vec{\overline{\mathbf{p}}}}\Bigg\rangle\\
    \geq C \bra{\norm{\mathbf{U}}^2_{\mathbf{H}\bra{\mathbf{curl}, \Omega_s}} + \norm{P}^2_{H^1\bra{\Omega_s}}+\norm{\vec{\mathbf{p}}}^2_{\mathcal{H}_N}}
  \end{multline*}
  for all $\vec{\mathbf{U}}:=\bra{\mathbf{U}\,\,P}^{\top}\in\mathbf{H}\bra{\mathbf{curl}, \Omega_s}\times H^1(\Omega_s)$ and $\vec{\mathbf{p}}\in \mathcal{H}_N$.
\end{proposition}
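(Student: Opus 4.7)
The plan is to handle the two diagonal blocks of $\mathbb{G}_{\kappa}$ separately and then combine the estimates, exploiting the structure built into $\Xi$ and $\Xi^{\Gamma}$ via the regular decompositions of Section~\ref{sec: space decomposition}.

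For the upper-left volume block $\mathsf{B}_{\kappa}-\colvec{2}{(\gamma_t^-)^*}{(\gamma^-)^*}\cdot\mathbb{A}^{DN}_{\kappa}\cdot\colvec{2}{\gamma_t^-}{-\gamma^-}$, I would expand $\mathfrak{B}_{\kappa}(\vec{\mathbf{U}},\Xi\vec{\overline{\mathbf{U}}})$ using $\mathbf{U}=\mathbf{U}^{\perp}+\mathbf{U}^0$. The curl--curl term reduces to $\int_{\Omega_s}\mu^{-1}|\mathbf{curl}\,\mathbf{U}|^2$, since $\mathbf{curl}\,\mathbf{U}^0=\mathbf{curl}(\nabla P)=0$, and is coercive in $\|\mathbf{curl}\,\mathbf{U}\|_{L^2}^2$ by uniform positivity of $\mathfrak{Re}(\mu^{-1})$. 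The cross term $\int_{\Omega_s}\epsilon\,\nabla P\cdot\mathbf{V}$ supplies $\beta\int_{\Omega_s}\epsilon|\nabla P|^2$ on the diagonal; using $\nabla\mathsf{S}(\overline{\mathbf{U}^0})=\overline{\mathbf{U}^0}$, the term $-\int_{\Omega_s}\epsilon\,\mathbf{U}\cdot\nabla Q$ delivers $\theta\int_{\Omega_s}\epsilon\,\mathbf{U}\cdot\overline{\mathbf{U}^0}$, whose diagonal piece $\theta\int_{\Omega_s}\mathfrak{Re}(\epsilon)|\mathbf{U}^0|^2$ is coercive in $\|\mathbf{U}^0\|^2_{L^2}$. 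All remaining cross-terms involve $\mathbf{U}^{\perp}$, which lies in $\mathbf{H}^1(\Omega_s)$, and so factor through the compactly embedded range of $\mathsf{Z}$ in $\mathbf{L}^2(\Omega_s)$ by Corollary~\ref{cor: volume proj is compact}; they are absorbed into $\mathsf{K}$ after Young's inequality. The zero-order mass term $-\omega^2\int_{\Omega_s}\epsilon\,\mathbf{U}\cdot\mathbf{V}$ is compact for the same reason. The $\int_{\Omega_s}PQ$ pairing yields a contribution in $|\mathbf{mean}(P)|^2$; since $P\mapsto\mathbf{mean}(P)$ is a rank-one operator, its (possibly sign-indefinite) contribution is itself compact and is absorbed or compensated via $\mathsf{K}$, after which Poincar\'e--Friedrichs upgrades the $\|\nabla P\|^2_{L^2}$ estimate to a full $\|P\|^2_{H^1}$ estimate. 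Finally, the boundary piece $\colvec{2}{(\gamma_t^-)^*}{(\gamma^-)^*}\cdot\mathbb{A}^{DN}_{\kappa}\cdot\colvec{2}{\gamma_t^-}{-\gamma^-}$ is compact on each summand of the regular decomposition: this follows by invoking Lemma~\ref{lem: compact sym parts} for the vectorial piece of $\mathcal{DL}_{\kappa}$ and the $\mathbf{H}^1$-smoothing of $\Upsilon_{\kappa}$ (Lemma~\ref{lem: normal vector single layer into H1}) together with the Rellich compactness of trace maps for the $\gamma^- P$ contribution.

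For the lower-right trace block, I would leverage the generalized G\aa rding inequality established in \cite[Sec.~6]{claeys2017first} for the Hodge--Helmholtz single-layer operator $\mathbb{A}^{ND}_{\kappa}=\{\mathcal{T}_D\}\cdot\mathcal{SL}_{\kappa}$. The sign-flip built into $\Xi^{\Gamma}$ on the divergence-free component $\mathbf{p}^0\in\mathbf{N}(\text{div}_{\Gamma},\Gamma)$, together with the scalar couplings weighted by $\tau,\lambda$, precisely realises the T-map needed to coerce the sum $-\bm{\Psi}_{\kappa}(\mathbf{p})-\nabla\tilde{\psi}_{\kappa}(\text{div}_{\Gamma}\mathbf{p})+\nabla\psi_{\tilde{\kappa}}(q)$ appearing in $\mathcal{SL}_{\kappa}$. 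Corollary~\ref{cor: trace proj is compact} absorbs the indefinite cross-terms between $\mathbf{p}^{\perp}$, $\mathbf{p}^0$, $\text{div}_{\Gamma}\mathbf{p}$ and $q$ into the compact remainder, and as before the finite-rank $\mathbf{mean}(q)$ contribution is compactly dispatched.

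Summing the two block estimates yields a lower bound of the form $c_1\|\mathbf{curl}\,\mathbf{U}\|^2+c_2\beta\|\nabla P\|^2+c_3\theta\|\mathbf{U}^0\|^2+c_4\|\vec{\mathbf{p}}\|^2_{\mathcal{H}_N}$ up to $\mathsf{K}$; choosing $\beta,\theta,\tau,\lambda>0$ small enough that Young's inequality applied to the indefinite cross-terms cannot overwhelm the positive diagonal quadratic forms, and invoking Poincar\'e--Friedrichs to promote $\|\nabla P\|^2$ to $\|\mathsf{Q}_*P\|^2_{H^1}$, gives the claimed coercivity in the full norm. I expect the main obstacle to be the bookkeeping needed to confirm that every non-coercive contribution---in particular the boundary coupling $[\gamma]^*\mathbb{A}^{DN}_{\kappa}[\gamma]$ and the contributions of $\mathbf{mean}(P)$ and $\mathbf{mean}(q)$---genuinely factors through a compact operator rather than a merely bounded one; once this verification is in place, the selection of the parameters $\beta,\theta,\tau,\lambda$ is essentially mechanical.
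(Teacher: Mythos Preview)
Your treatment of the volume bilinear form $\mathfrak{B}_{\kappa}$ and of the single-layer block $\mathbb{A}^{ND}_{\kappa}$ is broadly in line with the paper (Lemmas~\ref{lem: coercivity B} and~\ref{lem: coercivity of TDSL}), but there is a genuine gap in how you dispose of the hypersingular block $-\mathbb{A}^{DN}_{\kappa}=-\{\mathcal{T}_N\}\cdot\mathcal{DL}_{\kappa}$.

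You assert that the boundary piece
\[
\colvec{2}{(\gamma_t^-)^*}{(\gamma^-)^*}\cdot\mathbb{A}^{DN}_{\kappa}\cdot\colvec{2}{\gamma_t^-}{-\gamma^-}
\]
is compact on each summand of the regular decomposition, appealing to Lemma~\ref{lem: compact sym parts} and to the $\mathbf{H}^1$-smoothing of $\Upsilon_{\kappa}$. This is a misidentification. Lemma~\ref{lem: compact sym parts} concerns $(\gamma_t^-)^*\circ(\{\gamma_R\}\bm{\Psi}_{\kappa}+\tfrac{1}{2}\id)$, i.e.\ a piece of $\mathbb{A}^{NN}_{\kappa}=\{\mathcal{T}_N\}\cdot\mathcal{SL}_{\kappa}$ entering the \emph{off-diagonal} analysis of Proposition~\ref{prop: compactness off-diagonal blocks}; it says nothing about $\{\mathcal{T}_N\}\cdot\mathcal{DL}_{\kappa}$. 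In fact the latter is the Hodge--Helmholtz hypersingular operator, and after pairing with $\Xi\vec{\overline{\mathbf{U}}}$ its principal part computes (see the proof of Lemma~\ref{lem: coercivity TNDL}) to
\[
\norm{\text{div}_{\Gamma}(\gamma^-_{\tau}\mathbf{U})}^2_{-1/2}
+(\mathfrak{Re}(\kappa^2)+\theta)\norm{(\id-\mathsf{Z}^{\Gamma})\gamma^-_{\tau}\mathbf{U}}^2_{-1/2}
+\beta\norm{\mathbf{curl}_{\Gamma}(\gamma^- P)}^2_{-1/2}+\text{cross terms},
\]
which is manifestly non-compact. The paper does not absorb this term into $\mathsf{K}$; it proves it is \emph{coercive} in the trace norm $\mathcal{H}_D$ (Lemma~\ref{lem: coercivity TNDL}) and then adds that positivity to the estimate from $\mathfrak{B}_{\kappa}$. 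Without this step you cannot conclude: a non-compact term of unknown sign cannot be moved into a compact remainder, and you have not otherwise controlled its sign.

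A secondary point: the parameter selection goes the other way. In both Lemma~\ref{lem: coercivity B} and Lemma~\ref{lem: coercivity TNDL} one first picks $\delta$ large and then $\theta$ correspondingly \emph{large} to dominate the Young cross terms; taking $\theta$ ``small enough'' would fail. The actual content of the proof of Proposition~\ref{prop: coercivity volume and TNDL} is precisely to check that the large-$\theta$ requirements of those two lemmas are compatible with one another and independent of the choice of $\tau$ in Lemma~\ref{lem: coercivity of TDSL}.
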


The proof of this proposition will rely on several steps: Lemma \ref{lem: coercivity B}, Lemma \ref{lem: coercivity TNDL} and Lemma \ref{lem: coercivity of TDSL}.

The second proposition states that the off-diagonal blocks are compact operators. The proof of that fact relies on definitions and results that belong to the next technical section. It will materialize as the last piece of the puzzle that completes the proof of the T-coercivity of $\mathbb{G}_{\kappa}$.
\begin{proposition}\label{prop: compactness off-diagonal blocks}
  For any frequency $\omega\geq0$, there exists, for a suitable choice of $\tau$, $\beta$, $\theta$ and $\lambda$, a continuous compact endomorphism $\mathsf{K}$ of the space $\mathbf{H}\bra{\mathbf{curl}, \Omega_s}\times H^1(\Omega_s)\times\mathcal{H}_N$ such that
  \begin{equation}\label{eq: K kappa compact coercivity}
    \mathfrak{Re}\,\Bigg\langle \bra{\mathbb{G}_{\kappa}-\diag\bra{\mathbb{G}_{\kappa}}}\colvec{2}{\vec{\mathbf{U}}}{\vec{\mathbf{p}}},\colvec{2}{\Xi\,\vec{\overline{\mathbf{U}}}}{\Xi^{\Gamma}\vec{\overline{\mathbf{p}}}}\Bigg\rangle
    = \Big\langle \mathsf{K}\colvec{2}{\vec{\mathbf{U}}}{\vec{\mathbf{p}}},\colvec{2}{\vec{\overline{\mathbf{U}}}}{\vec{\overline{\mathbf{p}}}}\Big\rangle.
  \end{equation}
\end{proposition}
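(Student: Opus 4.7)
The quantity on the left of \eqref{eq: K kappa compact coercivity} decomposes as $\mathfrak{Re}(T_{\text{tr}} + T_{\text{bl}})$, with the top-right and bottom-left contributions given by
\begin{align*}
T_{\text{tr}} &:= \left\langle (\mathbb{P}^+_\kappa)_{22}\vec{\mathbf{p}},\, \colvec{2}{\gamma_t^-\Xi_1\vec{\overline{\mathbf{U}}}}{\gamma^-\Xi_2\vec{\overline{\mathbf{U}}}}\right\rangle,\qquad
T_{\text{bl}} := \left\langle (\mathbb{P}^-_\kappa)_{11}\colvec{2}{\gamma_t^-\mathbf{U}}{-\gamma^- P},\, \Xi^\Gamma\vec{\overline{\mathbf{p}}}\right\rangle.
\end{align*}
My first step is to invoke Proposition \ref{prop: adjointness} (equivalently \eqref{eq: symmetry in calderon projectors}) to rewrite $(\mathbb{P}^+_\kappa)_{22}$ as the ``transpose'' of $(\mathbb{P}^-_\kappa)_{11}$ modulo the compact operator $\mathcal{C}_\kappa$, so that the operator appearing in $T_{\text{tr}}$ is moved onto the second argument:
\begin{equation*}
T_{\text{tr}} = \left\langle \vec{\mathbf{p}},\, (\mathbb{P}^-_\kappa)_{11}\colvec{2}{\gamma_t^-\Xi_1\vec{\overline{\mathbf{U}}}}{\gamma^-\Xi_2\vec{\overline{\mathbf{U}}}}\right\rangle + r_1(\vec{\mathbf{p}}, \vec{\overline{\mathbf{U}}}),
\end{equation*}
with $r_1$ a continuous bilinear form that is compact in its second slot. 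Hence, up to compact remainders, both $T_{\text{tr}}$ and $T_{\text{bl}}$ now involve the \emph{same} operator $(\mathbb{P}^-_\kappa)_{11}$, paired against $\vec{\mathbf{p}}$ and $\Xi^\Gamma\vec{\overline{\mathbf{p}}}$ respectively.

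My second step exploits the regular decompositions of Section \ref{sec: space decomposition} to strip further compact contributions from the Dirichlet-type arguments. Writing $\mathbf{U} = \mathbf{U}^\perp + \mathbf{U}^0$ with $\mathbf{U}^\perp = \mathsf{Z}\mathbf{U} \in \mathbf{H}^1(\Omega_s)$, Lemma \ref{lem: Rellich for boundary} guarantees that $\gamma_t^-\mathbf{U}^\perp$ lies in $\mathbf{H}^{1/2}_T(\Gamma)$ and embeds compactly into $\mathbf{L}^2_t(\Gamma)\hookrightarrow \mathbf{H}^{-1/2}(\text{curl}_\Gamma,\Gamma)$; the analogous statement for $\mathbf{p}^\perp = \mathsf{Z}^\Gamma\mathbf{p}$ is Corollary \ref{cor: trace proj is compact}. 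Using $\gamma_t^-\nabla\overline{P} = \nabla_\Gamma\gamma^-\overline{P}$ and $\overline{\mathbf{U}}^0 = \overline{\mathbf{U}} - \overline{\mathbf{U}}^\perp$, this reduces the Dirichlet traces entering $T_{\text{tr}}$ to
\begin{equation*}
\colvec{2}{\gamma_t^-\Xi_1\vec{\overline{\mathbf{U}}}}{\gamma^-\Xi_2\vec{\overline{\mathbf{U}}}} \;\equiv\; \colvec{2}{-\gamma_t^-\overline{\mathbf{U}} + \beta\,\nabla_\Gamma \gamma^-\overline{P}}{-\theta\,\gamma^-\mathsf{S}(\overline{\mathbf{U}}^0) - \theta\beta\,\mathbf{mean}(\overline{P})} \pmod{\text{compact}},
\end{equation*}
and a similar reduction applies to $\Xi^\Gamma\vec{\overline{\mathbf{p}}}$ by means of $\mathsf{Z}^\Gamma$, $\text{div}_\Gamma$ and its right inverse $\bra{\text{div}_\Gamma}^{\dag}$ on $\mathbf{X}(\text{div}_\Gamma,\Gamma)$.

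The third and main step is the cancellation. Expanding $(\mathbb{P}^-_\kappa)_{11} = \mathbb{A}^{DD}_\kappa + \tfrac{1}{2}\id$ and substituting the reduced Dirichlet traces, I isolate the leading non-compact pairings. The sign flip between $\mathbf{U}^\perp$ and $\mathbf{U}^0$ built into $\Xi$, together with the analogous sign flip in $\Xi^\Gamma$, turns the sum $T_{\text{tr}} + T_{\text{bl}}$ into a skew-symmetric combination of the form $\langle B \vec{\overline{\mathbf{U}}}, \vec{\mathbf{p}}\rangle - \langle B \vec{\mathbf{U}}, \vec{\overline{\mathbf{p}}}\rangle$ for a common (non-compact) operator $B$ made of $\gamma_t^-$, $\nabla_\Gamma$, $\gamma^-$, the surface-gradient/surface-divergence pair, and the layer potentials of $(\mathbb{P}^-_\kappa)_{11}$; the real part of such a combination vanishes identically. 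Compactness of the leftover terms is then read off from Lemma \ref{lem: compactness of boundary integrals} (which lets one replace each layer potential by its $\kappa = 0$ counterpart, up to a compact remainder), Lemma \ref{lem: compact sym parts} (which handles the Maxwell double-layer contributions on each block of the regular decomposition), and the compactness of $\mathsf{Z}$ and $\mathsf{Z}^\Gamma$.

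The main obstacle I anticipate is the careful bookkeeping of the ``mean-value'' pieces $\mathbf{mean}(\overline{P})$ and $\mathbf{mean}(q)$ entering $\Xi_2\vec{\overline{\mathbf{U}}}$ and $\Xi^\Gamma_2\vec{\overline{\mathbf{p}}}$: since these scalars do not enjoy any intrinsic regularity gain, the non-compact contributions they generate must be cancelled \emph{exactly} through a precise relation among the scaling constants $\beta, \theta, \lambda, \tau$ --- this is precisely why the proposition requires a ``suitable'' choice of these parameters. A second subtlety is that the $\eta$-factor hidden in the modified Dirichlet trace $\gamma_{D,\eta}^+$ and in the potential $\Upsilon_\kappa$ has to be tracked through each substitution; here Lemma \ref{lem: grad phi term is anti hermitian} and Lemma \ref{lem: symmetry of two last terms single layer}, which underpin Proposition \ref{prop: adjointness}, need to be reinvoked to make sure every sign in the resulting skew-symmetric pattern lines up correctly.
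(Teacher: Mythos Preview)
Your overall strategy matches the paper's: use the adjointness of Proposition~\ref{prop: adjointness} to put both off-diagonal blocks in terms of the same Calder\'on block, expand over the regular decompositions, invoke Lemma~\ref{lem: compact sym parts} and Lemma~\ref{lem: compactness of boundary integrals} to peel off compact pieces, and then cancel what remains after a judicious parameter choice. The paper does exactly this (it moves $(\mathbb{P}^-_\kappa)_{11}$ over to $(\mathbb{P}^+_\kappa)_{22}$ rather than the reverse, but that is immaterial).

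However, two of your anticipated obstacles are misplaced, and you have missed the genuine one. First, the $\mathbf{mean}(\overline{P})$ and $\mathbf{mean}(q)$ contributions are harmless: the mean operator has finite rank, so every term it generates is compact from the outset --- the paper discards them immediately on that ground. Second, the $\eta$-bookkeeping is already absorbed into Proposition~\ref{prop: adjointness} and plays no further role in this proof.

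The real gap is in your claim that, after the reductions, ``the sum $T_{\text{tr}}+T_{\text{bl}}$ turns into a skew-symmetric combination $\langle B\vec{\overline{\mathbf{U}}},\vec{\mathbf{p}}\rangle-\langle B\vec{\mathbf{U}},\vec{\overline{\mathbf{p}}}\rangle$ for a common $B$.'' This is not automatic. After you discard the compact pieces and the terms handled by Lemma~\ref{lem: compact sym parts}, cross-terms remain that pair \emph{different} potentials against one another, for instance $\beta\,\langle\gamma_R^-\bm{\Psi}_0(\mathbf{p}^\perp),\gamma_t^-\nabla\overline{P}\rangle_\tau$ in $T_{\text{tr}}$ against $-\tau\,\langle\gamma_n^-\nabla\psi_0(\operatorname{div}_\Gamma\overline{\mathbf{p}}^\perp),\gamma^-P\rangle_\Gamma$ in $T_{\text{bl}}$. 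Making these fit a common $B$ requires two further ingredients you do not mention. (i) The specific parameter identification $\beta=\tau$ and $\lambda=\theta$ (not merely ``some relation''), so that the elementary duality pairs such as $\beta\,\langle\mathbf{p}^\perp,\gamma_t^-\nabla\overline{P}\rangle_\tau$ and $\tau\,\langle\gamma^-P,\operatorname{div}_\Gamma\overline{\mathbf{p}}^\perp\rangle_\Gamma$ cancel via $\nabla_\Gamma^*=-\operatorname{div}_\Gamma$. (ii) A non-trivial potential identity, obtained by integrating by parts in $\Omega_s$ and using $\mathbf{curl}\,\mathbf{curl}\,\bm{\Psi}_0=\nabla\psi_0(\operatorname{div}_\Gamma\,\cdot\,)$ (from \eqref{vector helmholtz for vector single layer} at $\kappa=0$ together with \eqref{scalar and vector single layer potential identity}), which gives
\[
\langle\gamma_R^-\bm{\Psi}_0(\mathbf{p}^\perp),\gamma_t^-\nabla\overline{P}\rangle_\tau=\langle\gamma_n^-\nabla\psi_0(\operatorname{div}_\Gamma\mathbf{p}^\perp),\gamma^-\overline{P}\rangle_\Gamma,
\]
and its companion relating $\langle\gamma_n^-\nabla\psi_0(q),\gamma^-\mathsf{S}(\overline{\mathbf{U}}^0)\rangle_\Gamma$ to $\langle\gamma_R^-\bm{\Psi}_0((\operatorname{div}_\Gamma)^\dagger Q_*\overline{q}),\gamma_t^-\mathbf{U}^0\rangle_\tau$. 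Without these identities the two halves of your would-be skew pair involve genuinely different boundary operators and their real part has no reason to vanish. Once you supply (i) and (ii), your plan goes through and coincides with the paper's argument.
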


The main result immediately follows from the two previous propositions.
\begin{theorem}\label{thm: main theorem}
  For any $\omega\geq 0$, there exists an isomorphism $\mathbb{X}_{\kappa}$ of the trial space $\mathbf{H}\bra{\mathbf{curl}, \Omega_s}\times H^1(\Omega_s)\times\mathcal{H}_N$, and compact operator $\mathbb{K}:\mathbf{H}\bra{\mathbf{curl}, \Omega_s}\times H^1(\Omega_s)\times\mathcal{H}_N\rightarrow\bra{\mathbf{H}\bra{\mathbf{curl}, \Omega_s}\times H^1(\Omega_s)}'\times\mathcal{H}_N'$ such that
  \begin{equation*}
    \mathfrak{Re}\,\Bigg\langle \bra{\mathbb{G}_{\kappa}+\mathbb{K}}\colvec{2}{\vec{\mathbf{U}}}{\vec{\mathbf{p}}},\mathbb{X}\colvec{2}{\vec{\overline{\mathbf{U}}}}{\vec{\overline{\mathbf{p}}}}\Bigg\rangle\geq C\bra{\norm{\mathbf{U}}^2_{\mathbf{H}\bra{\mathbf{curl},\Omega_s}} + \norm{P}_{H^1(\Omega_s)}^2+\norm{\vec{\mathbf{p}}}^2_{\mathcal{H}_N}}
  \end{equation*}
  for some positive constant $C>0$.
\end{theorem}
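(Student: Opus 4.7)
The plan is to assemble the generalized G{\aa}rding estimate directly from Proposition \ref{prop: coercivity volume and TNDL} and Proposition \ref{prop: compactness off-diagonal blocks}. I would take the block-diagonal operator
$$\mathbb{X}_{\kappa} := \begin{pmatrix} \Xi & 0 \\ 0 & \Xi^{\Gamma}\end{pmatrix}$$
on $V := \mathbf{H}(\mathbf{curl}, \Omega_s) \times H^1(\Omega_s) \times \mathcal{H}_N$, with $\Xi$ and $\Xi^{\Gamma}$ supplied by Proposition \ref{prop: isomorphism volume space} and Proposition \ref{prop: isomorphism trace space}. Since each factor is an isomorphism of a reflexive Banach space, so is $\mathbb{X}_{\kappa}$. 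The free parameters $\beta, \theta, \lambda, \tau$ built into these maps must first be tuned as required by Proposition \ref{prop: compactness off-diagonal blocks} (so that the off-diagonal coupling is genuinely compact), and this choice is then inherited by the diagonal estimate of Proposition \ref{prop: coercivity volume and TNDL}.

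With $\vec{\mathbf{w}} := (\vec{\mathbf{U}}, \vec{\mathbf{p}})^{\top}$, I split $\mathbb{G}_{\kappa} = \diag(\mathbb{G}_{\kappa}) + \mathbb{G}_{\kappa}^{\text{off}}$. Proposition \ref{prop: coercivity volume and TNDL}, applied to the first piece with test vector $\mathbb{X}_{\kappa}\vec{\overline{\mathbf{w}}}$, yields a lower bound by the squared natural norm of $\vec{\mathbf{w}}$ up to a compact bilinear perturbation $\langle \mathsf{K}_{\mathrm{d}}\vec{\mathbf{w}}, \vec{\overline{\mathbf{w}}}\rangle$. Proposition \ref{prop: compactness off-diagonal blocks} then shows that the contribution of $\mathbb{G}_{\kappa}^{\text{off}}$ under the same test is purely compact, of the form $\langle \mathsf{K}_{\mathrm{o}}\vec{\mathbf{w}}, \vec{\overline{\mathbf{w}}}\rangle$. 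Summing the two bounds gives
$$\mathfrak{Re}\,\langle \mathbb{G}_{\kappa}\vec{\mathbf{w}}, \mathbb{X}_{\kappa}\vec{\overline{\mathbf{w}}}\rangle + \langle (\mathsf{K}_{\mathrm{d}} - \mathsf{K}_{\mathrm{o}})\vec{\mathbf{w}}, \vec{\overline{\mathbf{w}}}\rangle \geq C\,\norm{\vec{\mathbf{w}}}_V^2.$$
To conform with the statement, I would recast this perturbation through the identity $\vec{\overline{\mathbf{w}}} = \mathbb{X}_{\kappa}^{-1}(\mathbb{X}_{\kappa}\vec{\overline{\mathbf{w}}})$ together with the Riesz identification of $V$ with $V'$, setting $\mathbb{K} := (\mathbb{X}_{\kappa}^{-1})^{*}(\mathsf{K}_{\mathrm{d}} - \mathsf{K}_{\mathrm{o}}) : V \to V'$. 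Compactness of $\mathbb{K}$ follows because it is the composition of a bounded operator with a compact one.

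The real difficulty is concealed in the two supporting propositions rather than in this assembly. The heart of the matter is Proposition \ref{prop: compactness off-diagonal blocks}: it rests on the symmetry identity \eqref{eq: symmetry in calderon projectors} furnished by Proposition \ref{prop: adjointness}, which guarantees that the leading off-diagonal contributions of $\mathbb{G}_{\kappa}$ cancel, up to compact residuals, once $\mathbb{X}_{\kappa}$ is applied in a sign-flip manner on the kernel components $\mathbf{U}^0$ and $\mathbf{p}^0$ analogous to the construction in \cite{buffa2003galerkin}. With Theorem \ref{thm: main theorem} secured, Theorem \ref{thm: coercivity implies fredholm} immediately yields Fredholmness of $\mathbb{G}_{\kappa}$ of index zero, and well-posedness of the coupled variational problem \eqref{Calderon coupled problem} away from the forbidden resonances then follows from the injectivity supplied by Corollary \ref{cor: existence and uniqueness}.
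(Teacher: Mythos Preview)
Your proposal is correct and follows essentially the same route as the paper: assemble $\mathbb{X}_\kappa=\diag(\Xi,\Xi^\Gamma)$, combine the diagonal coercivity of Proposition~\ref{prop: coercivity volume and TNDL} with the off-diagonal compactness of Proposition~\ref{prop: compactness off-diagonal blocks}, and observe that the parameter choices required by the two propositions are compatible. The paper's own proof is a single sentence to precisely this effect; your additional step of rewriting the compact remainder as $(\mathbb{X}_\kappa^{-1})^*(\mathsf{K}_{\mathrm d}-\mathsf{K}_{\mathrm o})$ to match the form of the theorem statement is a reasonable elaboration not spelled out in the paper.
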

\begin{proof}
  The proof will amount to the validation that the choices of parameters in the previous propositions \ref{prop: coercivity volume and TNDL} and \ref{prop: compactness off-diagonal blocks} are compatible.
\end{proof}

The following corollary is immediate upon applying \Cref{thm: coercivity implies fredholm}.
\begin{corollary}
  The system operator $\mathbb{G}_k:\mathbf{H}\bra{\mathbf{curl}, \Omega_s}\times H^1(\Omega_s)\times\mathcal{H}_N\rightarrow\bra{\mathbf{H}\bra{\mathbf{curl}, \Omega_s}\times H^1(\Omega_s)}'\times\mathcal{H}_N'$ associated with the variational problem \eqref{Calderon coupled problem} is Fredholm of index 0.
\end{corollary}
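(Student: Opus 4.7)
The plan is to deduce the corollary in one move from Theorem \ref{thm: main theorem} combined with the abstract T-coercivity/Fredholm implication recorded as Theorem \ref{thm: coercivity implies fredholm}. First, I would set up the abstract framework: let $V := \mathbf{H}(\mathbf{curl},\Omega_s)\times H^1(\Omega_s)\times\mathcal{H}_N$, which is a Hilbert space (hence a reflexive Banach space) as a finite product of Hilbert spaces, and define the sesquilinear form $a(u,v):=\langle \mathbb{G}_\kappa u,v\rangle$ corresponding to the coupled variational problem \eqref{Calderon coupled problem}. The system operator $\mathbb{G}_\kappa:V\to V'$ is precisely the Riesz-type operator associated with $a$, i.e.\ $A:u\mapsto a(u,\cdot)$ in the notation of Theorem \ref{thm: coercivity implies fredholm}.

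Next, I would invoke Theorem \ref{thm: main theorem} to extract the isomorphism $\mathbb{X}_\kappa$ of $V$ and the compact operator $\mathbb{K}:V\to V'$ for which the generalized Gårding inequality
\begin{equation*}
\mathfrak{Re}\,\langle (\mathbb{G}_\kappa+\mathbb{K})u,\mathbb{X}_\kappa\overline{u}\rangle \;\geq\; C\,\|u\|_V^2\qquad \forall\,u\in V
\end{equation*}
holds. Introducing the compact sesquilinear form $c(u,\overline{u}):=\langle\mathbb{K}u,\mathbb{X}_\kappa\overline{u}\rangle$ (compactness is preserved under composition of the compact $\mathbb{K}$ with the bounded isomorphism $\mathbb{X}_\kappa$ followed by the bounded duality pairing), the previous inequality rewrites as
\begin{equation*}
\mathfrak{Re}\bigl(a(u,\mathbb{X}_\kappa\overline{u})+c(u,\overline{u})\bigr)\;\geq\; C\,\|u\|_V^2,
\end{equation*}
which immediately implies $|a(u,\mathbb{X}_\kappa\overline{u})+c(u,\overline{u})|\geq C\|u\|_V^2$, matching the hypothesis \eqref{eq: t-coercivity def} of Theorem \ref{thm: coercivity implies fredholm} verbatim.

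Having checked all three ingredients required by that abstract theorem ($V$ reflexive, $\mathbb{X}_\kappa$ an isomorphism, the perturbation compact), the conclusion is that $A=\mathbb{G}_\kappa$ is Fredholm of index $0$, which is exactly the claim of the corollary. No obstacle is anticipated: the only conceptual point is the replacement of the real-part estimate by the modulus estimate required by Theorem \ref{thm: coercivity implies fredholm}, which is automatic since $\mathfrak{Re}\,z\leq|z|$. Everything else is bookkeeping: identifying $V$ as reflexive, reading off $\mathbb{X}_\kappa$ and $\mathbb{K}$ from Theorem \ref{thm: main theorem}, and citing the abstract result.
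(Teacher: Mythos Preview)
Your proposal is correct and follows precisely the route the paper takes: the corollary is stated as immediate from Theorem~\ref{thm: main theorem} via the abstract T-coercivity criterion of Theorem~\ref{thm: coercivity implies fredholm}, and you have merely spelled out the bookkeeping (reflexivity of $V$, passage from $\mathfrak{Re}$ to modulus, compactness of $c$) that the paper leaves implicit.
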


Injectivity, guaranteed when $\kappa^2$ avoids resonant frequencies by corollary \ref{cor: existence and uniqueness}, yields well-posedness.

\subsection{T-Coercivity of the diagonal blocks}
Equipped with the isomorphism $\Xi$, let us now study coercivity of the bilinear form $\mathfrak{B}_{\kappa}$ defined in \eqref{eq: sym bilinear form} and associated to the Hodge--Helmholtz/Laplace operator.
\begin{lemma}\label{lem: coercivity B}
  For any frequency $\omega\geq0$ and parameter $\beta>0$, there exist a positive constant $C>0$ and a parameter $\theta>0$, possibly depending on $\Omega_s$, $\mu$, $\epsilon$ and $\omega$, and a compact bounded sesqui-linear form $\mathfrak{K}$ defined over $\mathbf{H}\bra{\mathbf{curl}, \Omega_s}\times H^1(\Omega_s)$, such that
  \begin{equation*}
    \mathfrak{Re}\bra{{\color{violet}\mathfrak{B}_{\kappa}}\bra{\vec{\mathbf{U}},\Xi\,\vec{\overline{\mathbf{U}}}} - \mathfrak{K}\bra{\vec{\mathbf{U}},\vec{\mathbf{U}}}} \geq C \bra{\norm{\mathbf{U}}^2_{\mathbf{H}\bra{\mathbf{curl}, \Omega_s}} + \norm{P}^2_{H^1\bra{\Omega_s}}}
  \end{equation*}
  for all $\vec{\mathbf{U}}:=\bra{\mathbf{U},\,\,P}^{\top}\in\mathbf{H}\bra{\mathbf{curl}, \Omega_s}\times H^1(\Omega_s)$.
\end{lemma}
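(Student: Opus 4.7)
The plan is to substitute $\vec{\mathbf{V}}=\Xi\bra{\vec{\overline{\mathbf{U}}}}$ into \eqref{eq: sym bilinear form} and exploit two elementary identities satisfied by the isomorphism of Proposition \ref{prop: isomorphism volume space}. First, because $\mathbf{U}^0\in\mathbf{N}\bra{\mathbf{curl},\Omega_s}$ and $\mathbf{curl}\circ\nabla\equiv 0$, the test vector field satisfies $\mathbf{curl}\,\mathbf{V}=\mathbf{curl}\,\overline{\mathbf{U}^\perp}=\mathbf{curl}\,\overline{\mathbf{U}}$. Second, because $\nabla\mathsf{S}\bra{\overline{\mathbf{U}^0}}=\overline{\mathbf{U}^0}$ and $\mathbf{mean}(\overline{P})$ is constant, the test scalar obeys $\nabla Q=-\theta\,\overline{\mathbf{U}^0}$. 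Inserting these identities into the five summands of $\mathfrak{B}_\kappa\bra{\vec{\mathbf{U}},\Xi\vec{\overline{\mathbf{U}}}}$ isolates three explicitly positive contributions
\[
\int_{\Omega_s}\mu^{-1}\abs{\mathbf{curl}\,\mathbf{U}}^2\dif\mathbf{x},\quad \beta\int_{\Omega_s}\epsilon\,\abs{\nabla P}^2\dif\mathbf{x}, \quad \bra{\theta+\omega^2}\int_{\Omega_s}\epsilon\,\abs{\mathbf{U}^0}^2\dif\mathbf{x},
\]
whose real parts are controlled from below by $c_\mu\norm{\mathbf{curl}\,\mathbf{U}}^2$, $\beta\,\epsilon_{\min}\norm{\nabla P}^2$ and $\bra{\theta+\omega^2}\epsilon_{\min}\norm{\mathbf{U}^0}^2$ respectively, with $c_\mu:=\mu_{\min}/\norm{\mu}^2_\infty>0$, plus a remainder of cross-terms.

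I would then assemble $\mathfrak{K}$ from those cross-terms that are compact. All contributions containing the factor $\mathbf{U}^\perp=\mathsf{Z}\mathbf{U}$ --- notably the $\int\epsilon\,\nabla P\cdot\overline{\mathbf{U}^\perp}$ summand from the $\int\epsilon\,\nabla P\cdot\mathbf{V}$ piece, the $\theta\int\epsilon\,\mathbf{U}^\perp\cdot\overline{\mathbf{U}^0}$ summand from $-\int\epsilon\,\mathbf{U}\cdot\nabla Q$, and every $\mathbf{U}^\perp$-piece stemming from the $\omega^2$-term --- induce compact operators by Corollary \ref{cor: volume proj is compact}, which asserts compactness of $\mathsf{Z}:\mathbf{H}\bra{\mathbf{curl},\Omega_s}\rightarrow\mathbf{L}^2(\Omega_s)$. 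The term $-\theta\int P\,\mathsf{S}\bra{\overline{\mathbf{U}^0}}\dif\mathbf{x}$ coming from $\int P\,Q$ is compact because the Rellich embedding $H^1(\Omega_s)\hookrightarrow L^2(\Omega_s)$ is, and the rank-one residual $-\theta\beta\abs{\Omega_s}\abs{\mathbf{mean}(P)}^2$ from the same $\int P\,Q$ is trivially compact. I would further augment $\mathfrak{K}$ with an auxiliary rank-one sesquilinear piece of the form $-\abs{\Omega_s}\mathbf{mean}(P)\overline{\mathbf{mean}(Q)}$, whose diagonal evaluation restores $\abs{\Omega_s}\abs{\mathbf{mean}(P)}^2$ on the right-hand side; together with the Poincar\'e inequality $\norm{\mathsf{Q}_*P}_{L^2}\leq C_P\norm{\nabla P}$, this reconstructs the full $\norm{P}^2_{H^1(\Omega_s)}$.

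The only cross-term neither positive nor compact is the pairing $-\mathfrak{Re}\int\epsilon\,\nabla P\cdot\overline{\mathbf{U}^0}\dif\mathbf{x}$ coming from $\int\epsilon\,\nabla P\cdot\mathbf{V}$, together with its $\omega^2\beta$-weighted companion $-\mathfrak{Re}\int\epsilon\,\mathbf{U}^0\cdot\nabla\overline{P}\dif\mathbf{x}$ stemming from the $\omega^2$-term, whose joint modulus is bounded by $\bra{1+\omega^2\beta}\norm{\epsilon}_\infty\norm{\nabla P}\norm{\mathbf{U}^0}$. Young's inequality splits this as $\delta\norm{\nabla P}^2+\bra{1+\omega^2\beta}^2\norm{\epsilon}^2_\infty\norm{\mathbf{U}^0}^2/\bra{4\delta}$, and the choice $\delta=\beta\epsilon_{\min}/2$ preserves half the $\nabla P$-reservoir provided $\theta$ is picked large enough that $\bra{\theta+\omega^2}\epsilon_{\min}$ dominates $\bra{1+\omega^2\beta}^2\norm{\epsilon}^2_\infty/\bra{2\beta\epsilon_{\min}}$. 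Combined with the regular-decomposition bound $\norm{\mathbf{U}^\perp}_{\mathbf{H}^1}\leq C\norm{\mathbf{curl}\,\mathbf{U}}$ inherited from the continuity of $\mathsf{L}$, which yields $\norm{\mathbf{U}}^2_{\mathbf{L}^2}\leq C\bra{\norm{\mathbf{curl}\,\mathbf{U}}^2+\norm{\mathbf{U}^0}^2}$, the estimate closes. The main obstacle is the joint calibration of the parameters $\theta$ and $\beta$ keeping the Young absorption admissible while the three diagonal reservoirs stay positive; a secondary but important check is that each putative summand of $\mathfrak{K}$ does induce a compact endomorphism of the full trial space, which is systematically handled by factoring through $\mathsf{Z}$, through the Rellich embedding $H^1\hookrightarrow L^2$, or through the rank-one mean functional.
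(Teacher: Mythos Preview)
Your proposal is correct and follows essentially the same route as the paper: substitute $\Xi(\vec{\overline{\mathbf{U}}})$ into $\mathfrak{B}_\kappa$, use $\mathbf{curl}\,\mathbf{V}=\mathbf{curl}\,\overline{\mathbf{U}}$ and $\nabla Q=-\theta\,\overline{\mathbf{U}^0}$, collect the three positive diagonal contributions, relegate every $\mathbf{U}^\perp$-, $\mathsf{S}(\mathbf{U}^0)$- and $\mathbf{mean}(P)$-cross term to a compact $\mathfrak{K}$ via Corollary~\ref{cor: volume proj is compact} and Rellich, then absorb the two surviving $\epsilon\,\nabla P\!\cdot\!\overline{\mathbf{U}^0}$ cross terms by Young's inequality and a sufficiently large $\theta$. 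The only visible differences are cosmetic: you fix $\delta=\beta\epsilon_{\min}/2$ and then enlarge $\theta$, whereas the paper first takes $\delta$ large and then $\theta$ large; and you are explicit about recovering the full $\norm{P}_{H^1}^2$ by adding a rank-one compact piece and invoking Poincar\'e, a step the paper leaves implicit.
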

\begin{proof}
  As $\mathbf{curl}\bra{\mathbf{U}^0}=0$, $\mathbf{curl}\bra{\nabla P} =0$, and $\nabla \circ \mathbf{mean} = 0$, we evaluate
  \begin{align*}
    &\mathfrak{B}_{\kappa}\bra{\colvec{2}{\mathbf{U}}{P},\colvec{2}{\overline{\mathbf{U}}^{\perp}-\overline{\mathbf{U}}^0+\beta\overline{\nabla P}}{-\theta\bra{\mathsf{S}\bra{\overline{\mathbf{U}}^0}+\beta\,\mathbf{mean}\bra{\overline{P}}}}} \\
    &=\bra{\mu^{-1}\mathbf{curl}\bra{\mathbf{U}^{\perp}},\mathbf{curl}\bra{\mathbf{U}^{\perp}}}_{\Omega_s}
    +\bra{\epsilon\nabla P, \mathbf{U}^{\perp}}_{\Omega_s}
    -\bra{\epsilon\nabla P, \mathbf{U}^{0}}_{\Omega_s}\\ 
    &\qquad+ \beta\bra{\epsilon\nabla P, \nabla P}_{\Omega_s}  +\theta\bra{\epsilon\mathbf{U}^{\perp},\mathbf{U}^0}_{\Omega_s}+\theta\bra{\epsilon\mathbf{U}^{0},\mathbf{U}^0}_{\Omega_s}\\
    &\qquad-\omega^2\bra{\epsilon\mathbf{U}^{\perp},\mathbf{U}^{\perp}-\mathbf{U}^0+\beta\nabla P}_{\Omega_s}
    -\omega^2\bra{\epsilon\mathbf{U}^0,\mathbf{U}^{\perp}}+\omega^2\bra{\epsilon\mathbf{U}^0,\mathbf{U}^{0}}\\
    &\qquad-\beta\omega^2\bra{\epsilon\mathbf{U}^0,\nabla P}
    -\bra{P,\theta \mathsf{S}\bra{\mathbf{U}^0}}_{\Omega_s}-\bra{P,\theta \beta\,\mathbf{mean}(P)}_{\Omega_s}.
  \end{align*}
  Upon application of the Cauchy-Schwartz inequality, the bounded sesqui-linear form
  \begin{align*}
    \mathfrak{K}\bra{\vec{\mathbf{U}},\vec{\mathbf{U}}} &:= \bra{\epsilon\nabla P,
      \mathbf{U}^{\perp}}_{\Omega_s}-\bra{P,\theta
      \mathsf{S}\bra{\mathbf{U}^0}}_{\Omega_s}
    +\theta\bra{\epsilon\mathbf{U}^{\perp},\mathbf{U}^0}_{\Omega_s}
    \\
    &\qquad-\omega^2\bra{\epsilon\mathbf{U}^0,\mathbf{U}^{\perp}}_{\Omega_s}-\omega^2\bra{\epsilon\mathbf{U}^{\perp},\mathbf{U}^{\perp}-\mathbf{U}^0+\beta\nabla
      P}_{\Omega_s}\\
     &\qquad-\bra{P,\theta \beta\,\mathbf{mean}(P)}_{\Omega_s}
  \end{align*}
  is shown to be compact by compactness of $\mathsf{Z}$ and the Rellich theorem. Using Young's inequality twice with $\delta>0$, we estimate
  \begin{align*}
    &\mathfrak{Re}\bra{\mathfrak{B}_{\kappa}\bra{\vec{\mathbf{U}},\Xi\,\vec{\mathbf{U}}}
      -\mathfrak{K}\bra{\vec{\mathbf{U}},\vec{\mathbf{U}}}}\\
    &\qquad\geq\mu^{-1}_{\text{max}}\,\norm{\mathbf{curl}\,\mathbf{U}^{\perp}}^2_{\Omega_s}
    +
    \bra{\epsilon_{\text{min}}\bra{\theta+\omega^2}-\delta\,\epsilon_{\text{max}}\bra{1+\beta\omega^2}}\norm{\mathbf{U}^0}^2_{\Omega_s}
    \\
    &\qquad\qquad+\mathfrak{Re}\bra{\epsilon_{\text{min}}\,\beta -\frac{1}{\delta} \epsilon_{\text{max}}\,\bra{1 +\beta\omega^2}}\norm{\nabla P}^2_{\Omega_s}.
  \end{align*}
  The operator $\mathbf{curl}:\mathsf{Z}\bra{\mathbf{H}\bra{\mathbf{curl},\Omega}}\rightarrow \mathbf{L}^2\bra{\Omega_s}$ is a continuous injection, hence since its image is closed in $\mathbf{L}^2\bra{\Omega_s}$,  it is also bounded below. Hence, for any $\beta>0$, choose $\delta>0$ large enough, then $\theta>0$ accordingly large, and the desired inequality follows.
\end{proof}

The complex inner products
  \begin{align*}
    \bra{a,b}_{-1/2} &:=\int_{\Gamma}\int_{\Gamma}G_0\bra{\mathbf{x}-\mathbf{y}}a(\mathbf{x})\,\overline{b(\mathbf{y})}\dif\sigma(\mathbf{x})\dif\sigma(\mathbf{y}),\\
    \bra{\mathbf{a},\mathbf{b}}_{-1/2} &:=\int_{\Gamma}\int_{\Gamma}G_0\bra{\mathbf{x}-\mathbf{y}}\mathbf{a}(\mathbf{x})\cdot\overline{\mathbf{b}(\mathbf{y})}\dif\sigma(\mathbf{x})\dif\sigma(\mathbf{y}),
  \end{align*}
defined over $H^{-1/2}(\Gamma)$ and $\mathbf{H}^{-1/2}\bra{\text{div}_{\Gamma},\Gamma}$ respectively, are positive definite Hermitian forms and they induce equivalent norms on the trace spaces \cite[Sec. 4.1]{Buffa2002Boundary}. Combined with the stability of the decomposition introduced in Section \ref{sec: space decomposition}, this observation also allows us to conclude that 
\begin{equation*}
  \mathbf{a}\mapsto\norm{\text{div}_{\Gamma}\bra{\mathbf{a}}}_{-1/2} + \norm{(\id - P^{\Gamma})\,\mathbf{a}}_{-1/2}
\end{equation*} 
also defines an equivalent norm in $\mathbf{H}^{-1/2}\bra{\text{div}_{\Gamma},\Gamma}$.

Let us denote the two components of the isomorphism $\Xi$ by 
\begin{align*}
  \Xi_1(\vec{\mathbf{U}}):=\mathbf{U}^{\perp}-\mathbf{U}^0+\nabla P,\quad 
  \Xi_2(\vec{\mathbf{U}}):=-\theta\bra{\mathsf{S}\bra{\mathbf{U}^0}+\,\mathbf{mean}\bra{P}}.
\end{align*}
We now derive an estimate similar to the one found in Lemma \ref{lem: coercivity B} that
completes the proof of the coercivity of the upper-left diagonal block of
$\mathbb{G}_{\kappa}$.

\begin{lemma}\label{lem: coercivity TNDL}
  For any frequency $\omega\geq0$ and parameter $\beta>0$, there exist a positive constant $C>0$ and a parameter $\theta>0$, possibly depending on $\Omega_s$, $\mu$, $\epsilon$ and $\kappa$, and a compact linear operator $\mathcal{K}:\mathbf{H}\bra{\mathbf{curl}, \Omega_s}\times H^1(\Omega_s)\rightarrow \mathbf{H}\bra{\mathbf{curl}, \Omega_s}\times H^1(\Omega_s)$ such that
  \begin{multline*}
    \mathfrak{Re}\Bigg(\Big\langle {\color{purple}-\mathbb{A}^{DN}_{\kappa}}\colvec{2}{\gamma_t^-\mathbf{U}}{-\gamma^-\bra{P}},\colvec{2}{\gamma^{-}_t\Xi_1\vec{\overline{\mathbf{U}}}}{\gamma^{-} \Xi_2\vec{\overline{\mathbf{U}}}}\Big\rangle\\ +\Big\langle\mathcal{K}\colvec{2}{\gamma_t^-\mathbf{U}}{-\gamma^-\bra{P}},\colvec{2}{\gamma^{-}_t\Xi_1\vec{\overline{\mathbf{U}}}}{\gamma^{-} \Xi_2\vec{\overline{\mathbf{U}}}}\Big\rangle  \Bigg)
    \geq C\norm{\colvec{2}{\gamma_t^-\mathbf{U}}{\gamma^-(P)}}^2_{\mathcal{H}_D(\Omega_s)}
  \end{multline*}
  for all $\vec{\mathbf{U}}:=\bra{\mathbf{U}\,\,P}^{\top}\in\mathbf{H}\bra{\mathbf{curl}, \Omega_s}\times H^1(\Omega_s)$.
\end{lemma}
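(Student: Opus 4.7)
The plan is to recast the duality pairing involving $\mathbb{A}^{DN}_{\kappa}=\{\mathcal{T}_{N}\}\cdot\mathcal{DL}_{\kappa}$ as an interior volume bilinear form on $\Omega_{s}$, to which the coercivity strategy of Lemma \ref{lem: coercivity B} applies almost verbatim.

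First, I would set $\mathbf{W} := \mathcal{DL}_{\kappa}(\gamma_{t}^{-}\mathbf{U},\,-\gamma^{-}P)^{\top}$. By Lemma \ref{lem: double layers into delta}, $\mathbf{W}|_{\Omega_{s}}\in\mathbf{X}(\Delta,\Omega_{s})$ and $\mathbf{W}|_{\Omega'}\in\mathbf{X}_{\text{loc}}(\Delta,\Omega')$; moreover, $\mathbf{W}$ solves the homogeneous scaled Hodge--Helmholtz equation $-\Delta_{\eta}\mathbf{W}=\kappa^{2}\mathbf{W}$ in each subdomain. The jump relations \eqref{double layer jump} yield $[\mathcal{T}_{D}]\mathbf{W}=(\gamma_{t}^{-}\mathbf{U},\,-\gamma^{-}P)^{\top}$ and $[\mathcal{T}_{N}]\mathbf{W}=0$, so in particular $\{\mathcal{T}_{N}\}\mathbf{W}=\mathcal{T}_{N}^{-}\mathbf{W}$ and the pairing in the lemma only involves the interior side of $\Gamma$. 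Introducing the auxiliary scalar $R:=-\text{div}\,\mathbf{W}\in H^{1}(\Omega_{s})$, the pair $(\mathbf{W}|_{\Omega_{s}},R)$ satisfies a mixed version of the Hodge--Helmholtz equation structurally identical to \eqref{eq: green  mixed} but with constant coefficients $\eta,\kappa$.

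Next, I would apply Green's formulae \eqref{curl curl integral identity} and \eqref{IBP div} in $\Omega_{s}$ to $\mathbf{W}$ and $R$ tested against $\Xi_{1}\vec{\mathbf{U}}$ and $\Xi_{2}\vec{\mathbf{U}}$. Invoking the PDE for $\mathbf{W}$, this conversion re-expresses the boundary pairing as an $\Omega_{s}$-volume bilinear form that mirrors $\mathfrak{B}_{\kappa}$ of \eqref{eq: sym bilinear form} with the trial pair replaced by $(\mathbf{W}|_{\Omega_{s}},R)$. The coercivity machinery of Lemma \ref{lem: coercivity B}---namely the sign-flip $-\mathbf{U}^{0}$ in $\Xi_{1}$, the $\theta$-rescaled scalar lifting in $\Xi_{2}$, and Young's inequality with $\theta,\beta$ chosen large enough---then produces a positive lower bound on $(\mathbf{W}|_{\Omega_{s}},R)$ in the graph norm of $\mathbf{X}(\Delta,\Omega_{s})$. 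Since $\mathcal{DL}_{\kappa}:\mathcal{H}_{D}\to\mathbf{X}(\Delta,\Omega_{s})$ is continuous (Lemma \ref{lem: double layers into delta}) and reproduces its data through the Dirichlet jump $[\mathcal{T}_{D}]\mathbf{W}$, this bound transfers to the stated $\mathcal{H}_{D}$-coercivity for $(\gamma_{t}^{-}\mathbf{U},\,-\gamma^{-}P)$.

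All lower-order and cross contributions---the $\kappa^{2}\mathbf{W}\cdot\overline{\Xi_{1}\vec{\mathbf{U}}}$ pieces, mixed $(\mathbf{U}^{0},\mathbf{U}^{\perp})$ pairings, and indefinite terms involving regular parts---would be gathered into the compact operator $\mathcal{K}$ by appealing to Rellich's theorem, Lemma \ref{lem: Rellich for boundary}, and Corollaries \ref{cor: volume proj is compact}--\ref{cor: trace proj is compact}. The main obstacle is the careful bookkeeping of these cross terms: unlike the situation in Lemma \ref{lem: coercivity B}, where the isomorphism $\Xi$ was designed to match the trial-field splitting directly, here the trial field $\mathbf{W}$ is related to the data $(\mathbf{U},P)$ only indirectly through the potential's jump relation. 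Verifying that the cross terms between the regular decomposition of $\mathbf{W}|_{\Omega_{s}}$ and the $\Xi$-decomposition of $(\mathbf{U},P)$ either fit a skew-symmetric pattern (and thus vanish under the real part) or reduce to compact remainders is the technical heart of the argument.
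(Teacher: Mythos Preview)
Your volume-reduction strategy has a genuine gap at its core. After Green's formulae, the boundary pairing becomes a \emph{bilinear} form coupling the potential pair $(\mathbf{W}|_{\Omega_s},R)$ with the test $\Xi\,\vec{\overline{\mathbf{U}}}$, where $\mathbf{W}=\mathcal{DL}_{\kappa}(\gamma_{t}^{-}\mathbf{U},-\gamma^{-}P)^{\top}$. This is \emph{not} a quadratic form: the trial field $\mathbf{W}$ and the field $\mathbf{U}$ defining the test are different objects, linked only through boundary data. The machinery of Lemma~\ref{lem: coercivity B} works because the sign-flip in $\Xi$ is tailored to the regular decomposition of the \emph{same} trial field; applied here, the leading term is $\int_{\Omega_s}\mathbf{curl}\,\mathbf{W}\cdot\overline{\mathbf{curl}\,\mathbf{U}}\,\dif\mathbf{x}$, a genuine cross term with no sign. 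There is no mechanism by which the decompositions $\mathbf{W}=\mathbf{W}^{\perp}+\mathbf{W}^{0}$ and $\mathbf{U}=\mathbf{U}^{\perp}+\mathbf{U}^{0}$ would align, so your hoped-for skew-symmetric or compact cancellation of cross terms has no basis.

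Your transfer step is also broken. Even granting a lower bound by $\lVert\mathbf{W}|_{\Omega_s}\rVert_{\mathbf{X}(\Delta,\Omega_s)}^{2}$, this does \emph{not} control $\lVert(\gamma_{t}^{-}\mathbf{U},-\gamma^{-}P)\rVert_{\mathcal{H}_D}^{2}$: the data is recovered from the \emph{jump} $[\mathcal{T}_{D}]\mathbf{W}$, and the interior restriction of a double-layer potential can vanish for nonzero Dirichlet data (precisely when that data is the trace of an exterior solution). The paper avoids both problems by staying on the boundary. It invokes the explicit representation of the hypersingular operator from \cite[Sec.~6.4]{claeys2017first} to write $\langle-\mathbb{A}^{DN}_{\kappa}\cdot,\cdot\rangle$ directly in terms of the $(\cdot,\cdot)_{-1/2}$ inner products, then uses \eqref{eq: tau trace using commutative diagram} to swap the volume projector $\mathsf{Z}$ for the boundary projector $\mathsf{Z}^{\Gamma}$ modulo compact terms (via Lemma~\ref{lem: Rellich for boundary}). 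This turns the pairing into a genuine quadratic form in the \emph{boundary} decomposition of $\gamma_{\tau}^{-}\mathbf{U}$ and $\mathbf{curl}_{\Gamma}\gamma^{-}P$, to which Young's inequality and the bounded-below property of $\mathbf{curl}_{\Gamma}$ on $H^{1}_{*}$ apply directly.
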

\begin{proof}
The jump condition \eqref{double layer jump} yield $\{\mathcal{T}_{N}\}\cdot\mathcal{DL}_{\kappa}=\mathcal{T}_{N}\cdot\mathcal{DL}_{\kappa}$. We deduce from \cite[Sec. 6.4]{claeys2017first} that,
\begin{align}
&\Big\langle -\mathcal{T}_{N}\cdot\mathcal{DL}_{\kappa}\colvec{2}{\gamma_t^-\mathbf{U}}{-\gamma^-\bra{P}},\colvec{2}{\gamma^{-}_t\Xi_1\vec{\mathbf{U}}}{\gamma^{-} \Xi_2\vec{\mathbf{U}}}\Big\rangle \nonumber\\
&\hat{=} \bra{\text{div}_{\Gamma}\bra{\mathbf{n}\times\gamma_t^-\mathbf{U})},\text{div}_{\Gamma}\bra{\mathbf{n}\times\gamma^{-}_t\Xi_1\vec{\mathbf{U}})}}_{-1/2}\nonumber\\
&\qquad- \kappa^2\bra{\mathbf{n}\times\gamma_t^-\mathbf{U},\mathbf{n}\times\gamma^{-}_t\Xi_1\vec{\mathbf{U}}}_{-1/2}
+\bra{\mathbf{n}\times\gamma_t^-\mathbf{U},\mathbf{curl}_{\Gamma}\bra{\gamma^{-} \Xi_2\vec{\mathbf{U}}}}_{-1/2}\nonumber\\
&\qquad-\bra{\mathbf{n}\times\gamma^{-}_t\Xi_1\vec{\mathbf{U}},\mathbf{curl}_{\Gamma}\bra{\gamma^-\bra{P}}}_{-1/2}\nonumber\\
&=\bra{\text{div}_{\Gamma}\bra{\gamma_{\tau}^-\mathbf{U})},\text{div}_{\Gamma}\bra{\gamma^{-}_{\tau}\Xi_1\vec{\mathbf{U}})}}_{-1/2}
- \kappa^2\bra{\gamma_{\tau}^-\mathbf{U},\gamma^{-}_{\tau}\Xi_1\vec{\mathbf{U}}}_{-1/2}\nonumber\\
&\qquad-\bra{\gamma_{\tau}^-\mathbf{U},\mathbf{curl}_{\Gamma}\bra{\gamma^{-} \Xi_2\vec{\mathbf{U}}}}_{-1/2}
+\bra{\gamma^{-}_{\tau}\Xi_1\vec{\mathbf{U}},\mathbf{curl}_{\Gamma}\bra{\gamma^-\bra{P}}}_{-1/2}\label{eq: TNDL up to compact}
\end{align}

We consider each component of the isomorphim $\Xi$ in turn. Since $\mathsf{Z}\bra{\mathbf{U}}\in\mathbf{H}^1(\Omega_s)$ \cite[Lem. 3.5]{amrouche1998vector} and $\gamma_t\mathbf{H}^1(\Omega_s)$ is compactly embedded in $\mathbf{L}^2_t(\Gamma)$ \cite[Lem. 3.2]{hiptmair2003coupling}, the continous mapping $\gamma_{\tau}\circ \mathsf{Z}:\mathbf{H}\bra{\mathbf{curl},\Omega_s}\rightarrow\mathbf{H}_R^{1/2}\bra{\Omega_s}$ is compact. Therefore,
\begin{align}
	\gamma^-_{\tau}\Xi_1\bra{\vec{\mathbf{U}}} &= \gamma^-_\tau\mathbf{U}^{\perp}-\gamma^-_\tau\mathbf{U}^0+\beta\gamma^-_{\tau}\nabla P\nonumber\\
&\,\,\,\hat{=}\, \mathsf{Z}^{\Gamma}\bra{\gamma^-_{\tau}\mathbf{U}} - \bra{\id-\mathsf{Z}^{\Gamma}}\gamma^-_{\tau}\mathbf{U}+\beta\,\mathbf{curl}_{\Gamma}\bra{\gamma^-{P}}.\label{eq: tau trace using commutative diagram}
\end{align}
Let's introduce expression \eqref{eq: tau trace using commutative diagram} in the various terms of \eqref{eq: TNDL up to compact} involving $\Xi_1(\vec{\mathbf{U}})$. We find that
		\begin{align*}
		\Big(\text{div}_{\Gamma}\left(\gamma_{\tau}^-\mathbf{U})\right),&\text{div}_{\Gamma}\bra{\gamma^{-}_{\tau}\Xi_1\vec{\mathbf{U}})}\Big)_{-1/2}\\
		&\hat{=}
		\bra{\text{div}_{\Gamma}\bra{\gamma_{\tau}\mathbf{U}},\text{div}_{\Gamma}\bra{\mathsf{Z}^{\Gamma}\bra{\gamma^-_{\tau}\mathbf{U}}}}_{-1/2}\\
		&\qquad\qquad-\bra{\text{div}_{\Gamma}\bra{\gamma_{\tau}\mathbf{U}},\text{div}_{\Gamma}\bra{\bra{\id-\mathsf{Z}^{\Gamma}}\gamma^-_{\tau}\mathbf{U}}}_{-1/2}\\
		&\qquad\qquad+\beta\bra{\text{div}_{\Gamma}\bra{\gamma_{\tau}\mathbf{U}},\text{div}_{\Gamma}\bra{\mathbf{curl}_{\Gamma}\bra{\gamma^-{P}}}}_{-1/2}\\
		&=	\bra{\text{div}_{\Gamma}\bra{\gamma^-_{\tau}\mathbf{U}},\text{div}_{\Gamma}\bra{\gamma^-_{\tau}\mathbf{U}}}_{-1/2}.
	\end{align*}
	Similarly,
	\begin{multline*}
		- \kappa^2\bra{\gamma_{\tau}^-\mathbf{U},\gamma^{-}_{\tau}\Xi_1\vec{\mathbf{U}}}_{-1/2}  
		\hat{=} \, \,\kappa^2\bra{\bra{\id-\mathsf{Z}^{\Gamma}}\gamma_{\tau}^-\mathbf{U},\bra{\id-\mathsf{Z}^{\Gamma}}\gamma^-_{\tau}\mathbf{U}}_{-1/2}\\
		-\beta\kappa^2\bra{\bra{\id-\mathsf{Z}^{\Gamma}}\gamma_{\tau}^-\mathbf{U},\mathbf{curl}_{\Gamma}\bra{\gamma^-{P}}}_{-1/2}
	\end{multline*}
	and
	\begin{multline*}
		\bra{\gamma^{-}_{\tau}\Xi_1\vec{\mathbf{U}},\mathbf{curl}_{\Gamma}\bra{\gamma^-\bra{P}}}_{-1/2}
		\hat{=}
		-\bra{\bra{\id-\mathsf{Z}^{\Gamma}}\gamma^-_{\tau}\mathbf{U},\mathbf{curl}_{\Gamma}\bra{\gamma^-\bra{P}}}_{-1/2} \\
		\qquad+\beta\bra{\mathbf{curl}_{\Gamma}\bra{\gamma^-{P}},\mathbf{curl}_{\Gamma}\bra{\gamma^-\bra{P}}}_{-1/2}. 
	\end{multline*}
We now want to evaluate the terms involving $\Xi_2(\vec{\mathbf{U}})$. We introduce
\begin{equation*}
\mathbf{curl}_{\Gamma}\bra{\gamma^{-} \Xi_2\vec{\mathbf{U}}}=-\theta\gamma^-_{\tau}\nabla\bra{\mathsf{S}(\mathbf{U}^0)+\mathbf{mean}(P)}=-\theta\bra{\id-\mathsf{Z}^{\Gamma}}\gamma^-_{\tau}\mathbf{U},
\end{equation*}
in \eqref{eq: TNDL up to compact} to obtain
\begin{equation*}
-\bra{\gamma_{\tau}^-\mathbf{U},\mathbf{curl}_{\Gamma}\bra{\gamma^{-} \Xi_2\vec{\mathbf{U}}}}_{-1/2} =\theta\bra{\bra{\id-\mathsf{Z}^{\Gamma}}\gamma^-_{\tau}\mathbf{U},\bra{\id - \mathsf{Z}^{\Gamma}}\gamma_{\tau}\mathbf{U}}_{-1/2}
\end{equation*}
Using Young's inequality twice with $\delta>0$,
		\begin{align*}
	&\mathfrak{Re}\bra{\Big\langle -\{\mathcal{T}_{N}\}\cdot\mathcal{DL}_{\kappa}\colvec{2}{\gamma_t^-\mathbf{U}}{-\gamma^-\bra{P}},\colvec{2}{\gamma^{-}_t\Xi_1\vec{\mathbf{U}}}{\gamma^{-} \Xi_2\vec{\mathbf{U}}}\Big\rangle}\\
	&\hat{=} \,\norm{\text{div}_{\Gamma}\bra{\gamma^-_{\tau}\mathbf{U}}}^2_{-1/2} + \bra{\mathfrak{Re}\bra{\kappa^2}+\theta}\norm{\bra{\id-\mathsf{Z}^{\Gamma}}\gamma_{\tau}^-\mathbf{U}}^2_{-1/2}\\
	&\qquad+ \beta\norm{\mathbf{curl}_{\Gamma}\bra{\gamma^-\bra{P}}}^2
	-\bra{\bra{\id-\mathsf{Z}^{\Gamma}}\gamma^-_{\tau}\mathbf{U},\mathbf{curl}_{\Gamma}\bra{\gamma^-\bra{P}}}_{-1/2}\\ 
	&\qquad-\beta\,\mathfrak{Re}\bra{\kappa^2}\bra{\bra{\id-\mathsf{Z}^{\Gamma}}\gamma_{\tau}^-\mathbf{U},\mathbf{curl}_{\Gamma}\bra{\gamma^-{P}}}_{-1/2}\\
	&\geq \,\,\norm{\text{div}_{\Gamma}\bra{\gamma^-_{\tau}\mathbf{U}}}^2_{-1/2}+ \bra{\beta - \frac{1}{\delta}\bra{1+\beta\,\mathfrak{Re}\bra{\kappa^2}}}\norm{\mathbf{curl}_{\Gamma}\bra{\gamma^-\bra{P}}}^2\\
	&\qquad+\bra{\mathfrak{Re}\bra{\kappa^2}+\theta-\delta\,\bra{1 +\beta\,\mathfrak{Re}\bra{\kappa^2}}}\norm{\bra{\id-\mathsf{Z}^{\Gamma}}\gamma_{\tau}^-\mathbf{U}}^2_{-1/2}. \\
\end{align*}
The operator $\mathbf{curl}_{\Gamma}:H^1_*(\Omega_s)\rightarrow \mathbf{H}^{-1/2}\bra{\text{div}_{\Gamma},\Gamma}$ is a continuous injection \cite[Lem. 6.4]{claeys2017first}. It is thus bounded below. Since the mean operator has finite rank, it is compact. Therefore, for any $\beta>0$, choose $\delta>0$ large enough, then $\theta>0$ accordingly large, and the desired inequality follows by equivalence of norms. 
\end{proof}

In the next lemma, we prove coercivity of the lower diagonal block of the coupling operator $\mathbb{G}_{\kappa}$.
\begin{lemma}\label{lem: coercivity of TDSL}
  For any frequency $\omega\geq0$, there exist a compact linear operator $\mathcal{K}:\mathcal{H}_N\rightarrow \mathcal{H}_D$, a positive constants $C>0$ and parameters $\tau>0$ and $\lambda>0$, possibly depending on $\Omega_s$, $\mu$, $\epsilon$ and $\kappa$, such that
  \begin{equation*}
    \mathfrak{Re}\bra{\Big\langle{\color{magenta}\mathbb{A}^{ND}_{\kappa}}\bra{\vec{\mathbf{p}}}, \Xi^{\Gamma}\vec{\overline{\mathbf{p}}}
      \Big\rangle +\Big\langle\mathcal{K}\,\vec{\mathbf{p}},\vec{\overline{\mathbf{p}}}\Big\rangle} \geq C\norm{\vec{\mathbf{p}}}^2_{\mathcal{H}_N}
  \end{equation*}
  for all $\vec{\mathbf{p}}\in\mathcal{H}_N$.	In particular, for $\mathfrak{Re}\bra{k^2}\neq 0$, the inequality holds with $\tau = 1/\kappa^2$.
\end{lemma}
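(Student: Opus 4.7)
The plan is to unfold $\langle\mathbb{A}^{ND}_{\kappa}\vec{\mathbf{p}},\Xi^{\Gamma}\vec{\overline{\mathbf{p}}}\rangle$ via the explicit representation \eqref{Hodge-Helmholtz single layer potential} of the single layer potential, to reduce every frequency-dependent kernel to its zero-frequency counterpart modulo compact operators using Lemma \ref{lem: compactness of boundary integrals}, and then to extract, after substituting $\Xi^{\Gamma}\vec{\overline{\mathbf{p}}}$, three positive-definite boundary quadratic forms whose sum dominates $\|\vec{\mathbf{p}}\|^{2}_{\mathcal{H}_{N}}$. Combining the Helmholtz equations \eqref{scalar helmholtz for scalar single layer}--\eqref{vector helmholtz for vector single layer}, the divergence identity \eqref{scalar and vector single layer potential identity} and the relation $-\Delta\tilde{\psi}_{\kappa} = \psi_{\kappa} + \eta^{-1}\psi_{\tilde{\kappa}}$, direct calculation gives
\begin{equation*}
\{\eta\,\gamma_{D}\}\,\mathcal{SL}_{\kappa}\vec{\mathbf{p}} = \gamma\psi_{\tilde{\kappa}}(\text{div}_{\Gamma}\mathbf{p}) - \kappa^{2}\gamma\psi_{\tilde{\kappa}}(q),
\end{equation*}
while $\{\gamma_{t}\}\mathcal{SL}_{\kappa}\vec{\mathbf{p}}$ decomposes into $-\{\gamma_{t}\}\bm{\Psi}_{\kappa}(\mathbf{p})$ together with the two tangential gradients $-\nabla_{\Gamma}\gamma\tilde{\psi}_{\kappa}(\text{div}_{\Gamma}\mathbf{p})$ and $\nabla_{\Gamma}\gamma\psi_{\tilde{\kappa}}(q)$.

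Pairing with an arbitrary test element $\vec{\mathbf{a}}=(\mathbf{a},b)\in\mathcal{H}_{N}$, I would use $\langle\nabla_{\Gamma}\phi,\mathbf{a}\rangle_{\tau} = -\langle\phi,\text{div}_{\Gamma}\mathbf{a}\rangle$ to transfer the tangential gradients, and invoke Lemma \ref{lem: compactness of boundary integrals} to replace $\gamma\psi_{\tilde{\kappa}}$ and $\{\gamma_{t}\}\bm{\Psi}_{\kappa}$ by their zero-frequency counterparts up to compact corrections. The regular-potential contribution $\gamma\tilde{\psi}_{\kappa} = (\gamma\psi_{\kappa}-\gamma\psi_{\tilde{\kappa}})/\kappa^{2}$ is itself compact (difference of two compact operators into $H^{1/2}(\Gamma)$) and is absorbed into $\mathcal{K}$. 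I then substitute $\mathbf{a} = \overline{\mathbf{p}}^{\perp} - \overline{\mathbf{p}}^{0} - \lambda(\text{div}_{\Gamma})^{\dag}\mathsf{Q}_{*}\overline{q}$ and $b = -\tau(\text{div}_{\Gamma}\overline{\mathbf{p}} + \lambda\,\mathbf{mean}(\overline{q}))$ from the definition of $\Xi^{\Gamma}$, exploit $\text{div}_{\Gamma}\mathbf{a} = \text{div}_{\Gamma}\overline{\mathbf{p}} - \lambda\mathsf{Q}_{*}\overline{q}$ together with $\overline{q} = \mathsf{Q}_{*}\overline{q} + \mathbf{mean}(\overline{q})$, and select $\tau = 1/\kappa^{2}$ whenever $\mathfrak{Re}(\kappa^{2}) \neq 0$. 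With this choice, the cross-coupling of $\gamma\psi_{0}(q)$ against $\text{div}_{\Gamma}\overline{\mathbf{p}}$ cancels exactly and the remaining $q$-contributions collapse into $\lambda\langle\gamma\psi_{0}(q),\overline{q}\rangle_{\Gamma}$; all terms containing $\mathbf{p}^{\perp} = \mathsf{Z}^{\Gamma}\mathbf{p}$ become compact by Corollary \ref{cor: trace proj is compact}, and those involving the rank-one map $\mathbf{mean}(\overline{q})$ are finite rank, hence compact.

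Taking real parts, three positive-definite quadratic forms survive. The sign flip $-\overline{\mathbf{p}}^{0}$ in $\Xi^{\Gamma}_{1}$ turns the contribution of $-\{\gamma_{t}\}\bm{\Psi}_{0}(\mathbf{p}^{0})$ into $\langle\{\gamma_{t}\}\bm{\Psi}_{0}(\mathbf{p}^{0}),\overline{\mathbf{p}}^{0}\rangle_{\tau}$, whose real part is bounded below by $C\|\mathbf{p}^{0}\|^{2}_{\mathbf{H}^{-1/2}_{R}(\Gamma)}$ through the componentwise coercivity of the Laplace single layer on the divergence-free subspace $\mathbf{N}(\text{div}_{\Gamma},\Gamma)$, in analogy with the sign-flip argument for the Maxwell EFIE recalled in \cite[Lem. 8.2]{hiptmair2003coupling}. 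Coercivity of the scalar Laplace single layer $\gamma\psi_{0}$ on $H^{-1/2}(\Gamma)$ delivers $\lambda\,\mathfrak{Re}\langle\gamma\psi_{0}(q),\overline{q}\rangle_{\Gamma} \geq C\lambda\|q\|^{2}_{H^{-1/2}(\Gamma)}$, and the rescaling $\tau = 1/\kappa^{2}$ inside $\Xi^{\Gamma}_{2}$ yields the $\text{div}_{\Gamma}\mathbf{p}$ quadratic contribution controlling $\|\text{div}_{\Gamma}\mathbf{p}\|^{2}_{H^{-1/2}(\Gamma)}$. Invoking the equivalent norm $\|\vec{\mathbf{p}}\|^{2}_{\mathcal{H}_{N}} \sim \|\text{div}_{\Gamma}\mathbf{p}\|^{2}_{-1/2} + \|\mathbf{p}^{0}\|^{2}_{-1/2} + \|q\|^{2}_{-1/2}$ recorded before Lemma \ref{lem: coercivity TNDL} and absorbing the residual cross-couplings via Young's inequality, with $\lambda$ chosen large enough, yields the claimed lower bound.

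The principal technical difficulty is coordinating the sign flip on $\mathbf{p}^{0}$ with the rescaling $\tau = 1/\kappa^{2}$ so that all three quadratic forms are simultaneously strictly positive after taking real parts, which is exactly what the structure of $\Xi^{\Gamma}$ in Proposition \ref{prop: isomorphism trace space} was designed for. In the exceptional case $\mathfrak{Re}(\kappa^{2}) = 0$ the choice $\tau = 1/\kappa^{2}$ is no longer admissible and $\tau > 0$ must be fixed independently of $\kappa^{2}$, rebalancing the $\text{div}_{\Gamma}\mathbf{p}$ contribution against the $\lambda$-weighted $q$-term through a separate Young-type argument.
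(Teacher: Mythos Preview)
Your approach is essentially the same as the paper's. Both expand $\langle\mathbb{A}^{ND}_{\kappa}\vec{\mathbf{p}},\Xi^{\Gamma}\vec{\overline{\mathbf{p}}}\rangle$ via the explicit single-layer representation, reduce to the zero-frequency $(\cdot,\cdot)_{-1/2}$ inner products modulo compact corrections (the paper quotes \cite[Sec.~6.3]{claeys2017first} directly, you redo the computation from \eqref{Hodge-Helmholtz single layer potential}), substitute $\Xi^{\Gamma}$, and read off the three positive quadratic forms $\norm{\mathbf{p}^{0}}^{2}_{-1/2}$, $\tau\norm{\text{div}_{\Gamma}\mathbf{p}}^{2}_{-1/2}$, $\lambda\norm{\mathsf{Q}_{*}q}^{2}_{-1/2}$ after the choice $\tau=1/\kappa^{2}$ kills the $(q,\text{div}_{\Gamma}\mathbf{p})_{-1/2}$ cross-term. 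Two minor remarks: once $\tau=1/\kappa^{2}$ is fixed there are no residual cross-couplings left to absorb, so the Young step and ``$\lambda$ large enough'' are superfluous in that case (any $\lambda>0$ works); and the degenerate case in the paper is $\kappa^{2}=0$ rather than $\mathfrak{Re}(\kappa^{2})=0$, handled exactly as you describe by taking $\tau$ large and balancing via Young's inequality.
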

\begin{proof}
  The jump conditions \eqref{single layer jump} yield $\{\mathcal{T}_D\}\cdot\mathcal{SL}\bra{\vec{\mathbf{p}}}=\mathcal{T}_D\cdot\mathcal{SL}\bra{\vec{\mathbf{p}}}$. We deduce from \cite[Sec. 6.3]{claeys2017first} and the compact embedding of $\mathbf{X}\bra{\text{div}_{\Gamma},\Gamma}$ into $\mathbf{H}_R^{-1/2}(\Gamma)$ that
  \begin{align*}
    \Big\langle\mathcal{T}_D\cdot\mathcal{SL}\bra{\vec{\mathbf{p}}},\Xi^{\Gamma}\vec{\mathbf{p}}\Big\rangle &\hat{=}-\bra{\mathbf{p}^0,\Xi^{\Gamma}_1(\mathbf{p})}_{-1/2} -\bra{q,\text{div}_{\Gamma}\bra{\Xi^{\Gamma}_1(\mathbf{p})}}_{-1/2} \\ &\qquad-\bra{\text{div}_{\Gamma}(\mathbf{p}),\Xi^{\Gamma}_2\vec{\mathbf{p}}}_{-1/2} -\kappa^2\bra{q,\Xi^{\Gamma}_2\bra{\vec{\mathbf{p}}}}_{-1/2}\\
    &\hat{=}\bra{\mathbf{p}^0,\mathbf{p}^0}_{-1/2} -\bra{q,\text{div}_{\Gamma}(\mathbf{p}^{\perp})}_{-1/2} +\lambda\bra{q,\mathsf{Q}_*q}_{-1/2} \\
    &\qquad+\tau\bra{\text{div}_{\Gamma}(\mathbf{p}),\text{div}_{\Gamma}(\mathbf{p})}_{-1/2} + \tau\kappa^2\bra{q,\text{div}_{\Gamma}(\mathbf{p}^{\perp})}_{-1/2}.
  \end{align*}	
  When $\mathfrak{Re}\bra{\kappa^2} > 0$, setting $\tau=1/\kappa^2$ immediately yields the existence of a compact linear operator $\mathcal{K}:\mathcal{H}_N\rightarrow \mathcal{H}_D$ such that
  \begin{multline*}
    \Big\langle\mathcal{T}_D\cdot\mathcal{SL}\bra{\vec{\mathbf{p}}},\Xi^{\Gamma}\vec{\mathbf{p}}\Big\rangle 
    + \Big\langle \mathcal{K}\vec{\mathbf{p}},\Xi^{\Gamma}\vec{\mathbf{p}}\Big\rangle\\ \geq C\bra{ \norm{\text{div}_{\Gamma}\bra{\mathbf{p}}}^2_{-1/2} + \norm{\mathbf{p}^0}^2_{-1/2} + \norm{\mathsf{Q}_*q}^2_{-1/2}}.
  \end{multline*}
  When $\kappa^2 = 0$, the same inequality is obtained for any $\lambda>0$ by using Young's inequality as in the proof of Lemma \ref{lem: coercivity TNDL} and choosing $\tau$ large enough. The claimed inequality follows by equivalence of norms.
\end{proof}

Equipped with the previous three lemmas, we are now ready to prove Proposition \ref{prop: coercivity volume and TNDL}.

\begin{proof}[Proof of Proposition \ref{prop: coercivity volume and TNDL}]
  For any parameters $\beta>0$ and $\lambda>0$, the choices of $\delta$ and $\theta$ in the proofs of Lemma \ref{lem: coercivity B} and Lemma \ref{lem: coercivity TNDL} are not mutually exclusive. The choice of $\tau$ in Lemma \ref{lem: coercivity of TDSL} is independent of the choice of $\theta$.
\end{proof}

\subsection{Compactness of the off-diagonal blocks}
Finally, The off-diagonal blocks remain to be considered. We will show that, up to compact perturbations, a suitable choice of parameters in the isomorphisms $\Xi$ and $\Xi^{\Gamma}$ of the test space leads to a skew-symmetric pattern in $\mathbb{G}_{\kappa}$. In other words, up to compact terms, the volume and boundary parts of the system decouples over the space decompositions introduced in Section \ref{sec: space decomposition}.

\begin{proof}[Proof of Proposition \ref{prop: compactness off-diagonal blocks}]
  The isomorphisms $\Xi$ and $\Xi^{\Gamma}$ were designed so that favorable cancellations occur in evaluating the left hand side of \eqref{eq: K kappa compact coercivity}. 
  
  From the jump properties \eqref{single layer jump}, we have $\{\mathcal{T}_N\}\mathcal{SL}_{\kappa} = \mathcal{T}_N^-\mathcal{SL}_{\kappa} -(1/2)\id$. Therefore, as in \eqref{eq: average single layer}, we evaluate
  \begin{align}
    &\Big\langle{\color{olive}\bra{\mathbb{P}_{\kappa}^+}_{22}\vec{\mathbf{p}}},\colvec{2}{\gamma^-_t\Xi_1\vec{\overline{\mathbf{U}}}}{\gamma^-\Xi_2\vec{\overline{\mathbf{U}}}} \Big\rangle\nonumber\\
    &=\Big\langle\bra{-\{\mathcal{T}_N\}\cdot\mathcal{SL}_{\kappa}+\frac{1}{2}\id}\vec{\mathbf{p}},\colvec{2}{\gamma^-_t\Xi_1\vec{\overline{\mathbf{U}}}}{\gamma^-\Xi_2\vec{\overline{\mathbf{U}}}}\Big\rangle\nonumber\\
    &= \Big\langle-\mathcal{T}_N^-\cdot\mathcal{SL}_{\kappa}\bra{\vec{\mathbf{p}}},\colvec{2}{\gamma^-_t\Xi_1\vec{\overline{\mathbf{U}}}}{\gamma^-\Xi_2\vec{\overline{\mathbf{U}}}}\Big\rangle + \Big\langle \vec{\mathbf{p}},\colvec{2}{\gamma^-_t\Xi_1\vec{\overline{\mathbf{U}}}}{\gamma^-\Xi_2\vec{\overline{\mathbf{U}}}}\Big\rangle\nonumber\\
    &= \langle \gamma^-_R\bm{\Psi}_{\kappa}\bra{\mathbf{p}},\gamma_t^-\Xi_1\vec{\overline{\mathbf{U}}}\rangle_{\tau}
    - \langle\gamma_n^-\nabla\psi_{\tilde{\kappa}}\bra{q},\gamma^-\Xi_2\vec{\overline{\mathbf{U}}} \rangle_{\Gamma}+ \langle \gamma_n^-\bm{\Psi}_{\kappa}\bra{\mathbf{p}}, \gamma^-\Xi_2\vec{\overline{\mathbf{U}}}\rangle_{\Gamma} \nonumber\\
    &\qquad
    +\langle\gamma^-_n\nabla\tilde{\psi}_{\kappa}\bra{\text{div}_{\Gamma}\mathbf{p}},\gamma^-\Xi_2\vec{\overline{\mathbf{U}}}\rangle_{\Gamma}
   + \langle \mathbf{p},\gamma_t^-\Xi_1\vec{\overline{\mathbf{U}}}\rangle_{\tau}
    + \langle q, \gamma^-\Xi_2\vec{\overline{\mathbf{U}}}\rangle_{\Gamma}\nonumber\\
&\hat{=}\,
    {\color{red}\langle \gamma^-_R\bm{\Psi}_{\kappa}\bra{\mathbf{p}^0},\gamma_t\overline{\mathbf{U}}^{\perp}\rangle_{\tau}} 
    {\color{blue} 
    - \langle \gamma^-_R\bm{\Psi}_{\kappa}\bra{\mathbf{p}^0},\gamma_t\overline{\mathbf{U}^0}\rangle_{\tau}} 
   {\color{blue} + \beta\,\langle \gamma^-_R\bm{\Psi}_{\kappa}\bra{\mathbf{p}^0},\gamma_t\nabla\overline{P}\rangle_{\tau}}\nonumber\\
    &\qquad+{\color{blue}\langle\gamma^-_R\bm{\Psi}_{\kappa}\bra{\mathbf{p}^{\perp}},\gamma_t\overline{\mathbf{U}}^{\perp}\rangle_{\tau}}
    {\color{red} - \langle \gamma^-_R\bm{\Psi}_{\kappa}\bra{\mathbf{p}^{\perp}},\gamma_t\overline{\mathbf{U}^0}\rangle_{\tau}} \nonumber\\
    &\qquad+ \beta\,\langle \gamma^-_R\bm{\Psi}_{\kappa}\bra{\mathbf{p}^{\perp}},\gamma_t\nabla\overline{P}\rangle_{\tau}
    +\theta\,\langle\gamma^-_n\nabla\psi_{\tilde{\kappa}}\bra{q}, \gamma^-\mathsf{S}\bra{\overline{\mathbf{U}}^0}\rangle_{\Gamma}\nonumber\\
    &\qquad-\theta\,\langle \gamma^-_n\bm{\Psi}_{\kappa}\bra{\mathbf{p}},\gamma^- \mathsf{S}\bra{\overline{\mathbf{U}}^0}\rangle_{\Gamma}
    -\langle \gamma^-_n\nabla\tilde{\psi}_{\kappa}\bra{\text{div}_{\Gamma}\mathbf{p}},\theta\,\gamma^- \mathsf{S}\bra{\overline{\mathbf{U}}^0}\rangle_{\Gamma} \nonumber\\
    &\qquad{\color{red} + \langle\mathbf{p}^0,\gamma_t^-\overline{\mathbf{U}}^{\perp} \rangle_{\tau}}
    {\color{blue} + \langle\mathbf{p}^\perp,\gamma_t^-\overline{\mathbf{U}}^{\perp} \rangle_{\tau}}
{\color{blue} - \langle\mathbf{p}^0,\gamma_t^-\overline{\mathbf{U}}^{0} \rangle_{\tau}}
    {\color{red} - \langle\mathbf{p}^{\perp},\gamma_t^-\overline{\mathbf{U}}^{0} \rangle_{\tau}}\nonumber\\
    &\qquad{\color{blue} + \beta \,\langle \mathbf{p}^0,\gamma_t^-\nabla\overline{P}\rangle_{\tau}}
    + \beta \,\langle \mathbf{p}^{\perp},\gamma_t^-\nabla\overline{P}\rangle_{\tau}
    - \theta\,\langle q,\gamma^- \mathsf{S}\bra{\mathbf{U}^0}\rangle_{\Gamma},\label{eq: off-diagonal first SL}
  \end{align}
  where we have used that the finite rank of the mean operator implies compactness.
  
  Similarly, using Proposition \ref{prop: adjointness}, we find
  \begin{align}
    &\Big\langle{\color{teal}\bra{\mathbb{P}_{\kappa}^-}_{11}}\colvec{2}{\gamma_t^-\mathbf{U}}{-\gamma^-\bra{P}}, \Xi^{\Gamma}\vec{\overline{\mathbf{p}}}\Big\rangle = \Big\langle \colvec{2}{\gamma_t^-\mathbf{U}}{-\gamma^-\bra{P}},{\color{olive}\bra{\mathbb{P}_{\kappa}^+}_{22}}\Xi^{\Gamma}\vec{\overline{\mathbf{p}}}\Big\rangle\nonumber\\
    &\hat{=}\, {\color{red}\langle\gamma^-_R\bm{\Psi}_{\kappa}\bra{\overline{\mathbf{p}}^{\perp}},\gamma_t\mathbf{U}^0\rangle_{\tau}}
    {\color{blue}-\langle\gamma^-_R\bm{\Psi}_{\kappa}\bra{\overline{\mathbf{p}}^{0}},\gamma_t\mathbf{U}^{\perp}\rangle_{\tau}}\nonumber\\ 
    &\qquad-\lambda\,\langle\gamma_R^-\bm{\Psi}_{\kappa}\bra{\bra{\text{div}_{\Gamma}}^{\dag}Q_*\overline{q}},\gamma_t^-\mathbf{U}^0 \rangle_{\tau}
    +{\color{blue}\langle\gamma^-_R\bm{\Psi}_{\kappa}\bra{\overline{\mathbf{p}}^{0}},\gamma_t\mathbf{U}^{\perp}\rangle_{\tau}}\nonumber\\
    &\qquad{\color{red}-\langle\gamma^-_R\bm{\Psi}_{\kappa}\bra{\overline{\mathbf{p}}^{0}},\gamma_t\mathbf{U}^{\perp}\rangle_{\tau}} 
    {\color{blue}-\lambda\,\langle\gamma_R^-\bm{\Psi}_{\kappa}\bra{\bra{\text{div}_{\Gamma}}^{\dag}Q_*\overline{q}},\gamma_t^-\mathbf{U}^{\perp} \rangle_{\tau}}\nonumber\\
    &\qquad-\tau\,\langle \gamma_n^-\nabla\psi_{\tilde{\kappa}}\bra{\text{div}_{\Gamma}\overline{\mathbf{p}}^{\perp}},\gamma^-P\rangle
    -\langle\gamma^-_n\bm{\Psi}_{\kappa}\bra{\overline{\mathbf{p}}^{\perp}},\gamma^-P\rangle_{\Gamma}\quad...\nonumber\\
    \end{align}
    \begin{align}
    ...&\qquad+\langle\gamma^-_n\bm{\Psi}_{\kappa}\bra{\overline{\mathbf{p}}^{0}},\gamma^-P\rangle_{\Gamma}
    +\lambda\,\langle\gamma^-_n\bm{\Psi}_{\kappa}\bra{\bra{\text{div}}^{\dag}Q_*\overline{q}},\gamma^-P\rangle_{\Gamma}\nonumber\\
    &\qquad-\langle\gamma_n^-\nabla\tilde{\psi}_{\kappa}\bra{\text{div}_{\Gamma}\overline{\mathbf{p}}^{\perp}},\gamma^-P\rangle_{\Gamma}
    + \lambda\,\langle\gamma_n^-\nabla\tilde{\psi}_{\kappa}\bra{Q_*\overline{q}},\gamma^-P\rangle_{\Gamma}\nonumber\\
    &\qquad{\color{red}+\langle\gamma^-_t\mathbf{U}^0,\overline{\mathbf{p}}^{\perp}\rangle_{\tau}}
    {\color{blue}+\langle\gamma^-_t\mathbf{U}^{\perp},\overline{\mathbf{p}}^{\perp}\rangle_{\tau}}
    {\color{red}-\langle\gamma^-_t\mathbf{U}^{\perp},\overline{\mathbf{p}}^{0}\rangle_{\tau}}\nonumber\\
    &\qquad{\color{blue}-\langle\gamma^-_t\mathbf{U}^0,\overline{\mathbf{p}}^{0}\rangle_{\tau}}
    {\color{blue}- \lambda\,\langle\gamma^-_t\mathbf{U}^{\perp},\bra{\text{div}_{\Gamma}}^{\dag}Q_*\overline{q}\rangle_{\Gamma}}\nonumber\\
    &\qquad- \lambda\,\langle\gamma^-_t\mathbf{U}^{0},\bra{\text{div}_{\Gamma}}^{\dag}Q_*\overline{q}\rangle
    +\tau\,\langle \gamma^- P,\text{div}_{\Gamma}\bra{\overline{\mathbf{p}}^{\perp}}\rangle_{\Gamma}.\label{eq: off-diagonal second SL}
  \end{align}
  Many terms in these equations can be combined and asserted compact by \eqref{eq: old lem 4.6 1} and \eqref{eq: old lem 4.6 2}. They are indicated in {\color{blue}blue}. When summing the real
  parts of \eqref{eq: off-diagonal first SL} and \eqref{eq: off-diagonal second SL}, the
  terms in {\color{red}red} cancel. Relying on \eqref{eq: compact nu 0 phi} to \eqref{eq: compact nu 0 phi tilde}, some terms amount to compact perturbations so that we may replace $\kappa$
  and $\tilde{\kappa}$ by zero in those instances. We have arrived at the following identity:
  \begin{align*}
    \begin{split}
      \mathfrak{Re}&\bra{\Big\langle \bra{\mathbb{G}_{\kappa}-\diag\bra{\mathbb{G}_{\kappa}}}\colvec{2}{\vec{\mathbf{U}}}{\vec{\mathbf{p}}},\colvec{2}{\Xi\,\vec{\overline{\mathbf{U}}}}{\Xi^{\Gamma}\vec{\overline{\mathbf{p}}}}\Big\rangle}\\
      &\hat{=}\,\,\mathfrak{Re}\Bigg(
      \beta\,\langle \gamma^-_R\bm{\Psi}_{0}\bra{\mathbf{p}^{\perp}},\gamma_t\nabla\overline{P}\rangle_{\tau}
      +\theta\,\langle\gamma^-_n\nabla\psi_{0}\bra{q}, \gamma^-\mathsf{S}\bra{\overline{\mathbf{U}}^0}\rangle_{\Gamma}\\
      &\quad{\color[rgb]{0,0.5,0.15}-\theta\,\langle \gamma^-_n\bm{\Psi}_0\bra{\mathbf{p}},\gamma^- \mathsf{S}\bra{\overline{\mathbf{U}}^0}\rangle_{\Gamma}} 	
      {\color[rgb]{0.8,0.33,0}+ \beta \,\langle \mathbf{p}^{\perp},\gamma_t^-\nabla\overline{P}\rangle_{\tau}
        - \theta\,\langle q,\gamma^- \mathsf{S}\bra{\mathbf{U}^0}\rangle_{\Gamma}}\\
      &\quad- \lambda\,\langle\gamma_R^-\bm{\Psi}_{0}\bra{\bra{\text{div}_{\Gamma}}^{\dag}Q_*\overline{q}},\gamma_t^-\mathbf{U}^0 \rangle_{\tau}
      -\tau\,\langle \gamma_n^-\nabla\psi_{0}\bra{\text{div}_{\Gamma}\overline{\mathbf{p}}^{\perp}},\gamma^-P\rangle_{\Gamma}\\
      &\quad{\color[rgb]{0,0.5,0.15}-\langle\gamma^-_n\bm{\Psi}_0\bra{\overline{\mathbf{p}}^{\perp}},\gamma^-P\rangle_{\Gamma}
        +\langle\gamma^-_n\bm{\Psi}_0\bra{\overline{\mathbf{p}}^{0}},\gamma^-P\rangle_{\Gamma}}\\
      &	\quad{\color[rgb]{0,0.5,0.15}+\lambda\,\langle\gamma^-_n\bm{\Psi}_0\bra{\bra{\text{div}}^{\dag}Q_*\overline{q}},\gamma^-P\rangle_{\Gamma}}\\
      &\quad{\color[rgb]{0.8,0.33,0}- \lambda\,\langle\gamma^-_t\mathbf{U}^{0},\bra{\text{div}_{\Gamma}}^{\dag}Q_*\overline{q}\rangle_{\tau}
        +\tau\,\langle \gamma^- P,\text{div}_{\Gamma}\bra{\overline{\mathbf{p}}^{\perp}}\rangle_{\Gamma}}\Bigg).
    \end{split}
  \end{align*}
  We claim that the terms colored in {\color[rgb]{0,0.5,0.15}green} are compact. Indeed, the integral identities of Section \ref{sec: classical traces} together with equality \eqref{scalar and vector single layer potential identity} yield
  \begin{align*}
    &\langle \gamma^-_n\bm{\Psi}_0\bra{\mathbf{p}},\gamma^-
    \mathsf{S}\bra{\overline{\mathbf{U}}^0}\rangle_{\Gamma}\nonumber
    \\
    &\qquad\qquad\qquad\qquad\leq \bra{\norm{\psi_0\bra{\text{div}_{\Gamma}\mathbf{p}}}_{L^2(\Omega_s)}+\norm{\bm{\Psi}_0\bra{\mathbf{p}}}_{\mathbf{L}^2(\Omega_s)}}\norm{\overline{\mathbf{U}}^0}_{\mathbf{L}^2(\Omega_s)},\\
    &\langle \gamma_n^-\bm{\Psi}_0\bra{\overline{\mathbf{p}}},\gamma^-P\rangle_{\Gamma}\nonumber\\
    &\qquad\qquad\qquad\qquad\leq \bra{\norm{\psi_0\bra{\text{div}_{\Gamma}\overline{\mathbf{p}}}}_{L^2(\Omega_s)}+\norm{\bm{\Psi}_0\bra{\overline{\mathbf{p}}}}_{\mathbf{L}^2(\Omega_s)}}\norm{P}_{H^1(\Omega_s)}\\
    &\langle\gamma^-_n\bm{\Psi}_0\bra{\bra{\text{div}}^{\dag}Q_*\overline{q}},\gamma^-P\rangle_{\Gamma},\nonumber\\
    &\qquad\qquad\qquad\qquad\leq \bra{\norm{\psi_0\bra{Q_*q}}_{L^2(\Omega_s)}+\norm{\bm{\Psi}_0\bra{\text{div}_{\Gamma}\overline{\mathbf{p}}}}_{\mathbf{L}^2(\Omega_s)}}\norm{P}_{H^1(\Omega_s)}.
  \end{align*}
  Since $\psi_{0}:H^{-1/2}(\Gamma)\rightarrow H^1(\Omega_s)$ and
  $\bm{\Psi}_{0}:\mathbf{H}^{-1/2}(\Gamma)\rightarrow \mathbf{H}^1(\Omega_s)$ are
  continuous, compactness is guaranteed by Rellich's Theorem.
  
  To go further, we need to settle for a choice of parameters in the volume and boundary
  isomorphisms. Choose $\tau$ to satisfy the requirements of Lemma \ref{lem: coercivity of
    TDSL}, then set $\beta = \tau$. We are still free to let $\theta$ satisfy both Lemma
  \ref{lem: coercivity B} and Lemma \ref{lem: coercivity TNDL}, and then choose
  $\lambda = \theta$.
  
  Under this choice of parameters, the terms in {\color[rgb]{0.8,0.33,0}orange} vanish, because we have $
  \langle\mathbf{p}^{\perp},\gamma_t^-\nabla\overline{P}\rangle_{\tau} = \langle\mathbf{p}^{\perp},\nabla_{\Gamma}\gamma^-\overline{P}\rangle_{\tau}=-\langle\text{div}_{\Gamma}\bra{\mathbf{p}^{\perp}},\gamma^-\overline{P}\rangle_{\Gamma} 
  $, and similarly
  \begin{align*}
    \langle \gamma^-_t\mathbf{U}^0,\bra{\text{div}_{\Gamma}}^{\dag}Q_*\overline{q}\rangle_{\tau} &=\langle \gamma^-_t\nabla\nonumber \mathsf{S}\bra{\mathbf{U}^0},\bra{\text{div}_{\Gamma}}^{\dag}Q_*\overline{q}\rangle_{\tau}\\
    &=-\langle \gamma^-\mathsf{S}\bra{\mathbf{U}^0},Q_*\overline{q}\rangle_{\Gamma}.
  \end{align*}
  Finally, relying on \eqref{scalar helmholtz for scalar single layer}, \eqref{vector
    helmholtz for vector single layer} and \eqref{scalar and vector single layer potential
    identity} once more, we observe that
  \begin{multline*}
    \langle\gamma_R^-\bm{\Psi}_0\bra{\mathbf{p}^{\perp}},\gamma_t^-\nabla \overline{P}\rangle_{\tau} =\bra{\mathbf{curl}\,\mathbf{curl}\,\bm{\Psi}_0\bra{\mathbf{p}^{\perp}},\nabla P}_{\Omega_s}\\
    =\bra{\nabla\psi_0\bra{\text{div}_{\Gamma}\mathbf{p}^{\perp}},\nabla P}_{\Omega_s}
    = \langle\gamma^-_n\nabla\psi_0\bra{\text{div}_{\Gamma}\mathbf{p}^{\perp}},\gamma^-\overline{P}\rangle_{\Gamma}.
  \end{multline*}
  A similar derivation shows that 
  \begin{gather*}
    \langle\gamma^-_n\nabla\psi_{0}\bra{q}, \gamma^-\mathsf{S}\bra{\overline{\mathbf{U}}^0}\rangle_{\Gamma}\,\hat{=}\, \langle\gamma_R^-\bm{\Psi}_{0}\bra{\bra{\text{div}_{\Gamma}}^{\dag}Q_*\overline{q}},\gamma_t^-\mathbf{U}^0 \rangle_{\tau}.
  \end{gather*}
  We conclude that for such a choice of parameters,
  \begin{gather*}
    \mathfrak{Re}\bra{\Big\langle \bra{\mathbb{G}_{\kappa}-\diag\bra{\mathbb{G}_{\kappa}}}\colvec{2}{\vec{\mathbf{U}}}{\vec{\mathbf{p}}},\colvec{2}{\Xi\,\vec{\overline{\mathbf{U}}}}{\Xi^{\Gamma}\vec{\overline{\mathbf{p}}}}\Big\rangle}
    \,\hat{=}\, 0,
  \end{gather*}
  which concludes the proof of this proposition.
\end{proof}

\section{Conclusion}

In section \ref{sec: coupled problem} we have proposed a system of equations coupling the
\emph{mixed formulation} of the variational form of the Hodge-Helmholtz and Hodge-Laplace
equation with \emph{first-kind} boundary integral equations. Well-posedness of the coupled
problem was obtained using a T-coercivity argument demonstrating that the operator
associated to the coupled variational problem was Fredholm of index 0. When
$\kappa^2\in\mathbb{C}$ avoids resonant frequencies, the operator's injectivity is
guaranteed, and thus stability of the problem is obtained along with the existence and
uniqueness of the solution. For such $\kappa^2$, Proposition \ref{prop: variational system
  solves transmission system} shows how solutions to the coupled variational problem are
in one-to-one correspondence with solutions of the transmission system. In principle, the
CFIE-type stabilization strategy applicable to transmission problems for the scalar Helmholtz operator \cite{hiptmair2005stable} or the electric wave equation \cite{MR2766824} could also be attempted here to get rid of the spurious
resonances haunting the coupled problem \eqref{Calderon coupled problem}, but such
developments lie outside the scope of this work.

The symmetrically coupled system \eqref{Calderon coupled problem} offers a variational
formulation of the transmission problem \eqref{eq: transmission problem} in well-known
energy spaces suited for discretization by finite and boundary elements. It is therefore a
promising starting point for Galerkin discretization.

\bibliographystyle{plain}
\bibliography{bibliography.bib}
\end{document}